\newcommand{\nc}{\newcommand}
\nc{\browntext}[1]{\textcolor{brown}{#1}}
\nc{\greentext}[1]{\textcolor{green}{#1}}
\nc{\redtext}[1]{\textcolor{red}{#1}}
\nc{\bluetext}[1]{\textcolor{blue}{#1}}
\nc{\brown}[1]{\browntext{ #1}}
\nc{\green}[1]{\greentext{ #1}}
\nc{\red}[1]{\redtext{ #1}}
\nc{\blue}[1]{\bluetext{ #1}}
\nc{\zb}[1]{\redtext{From zb: #1}}
\def \bi{{\imath}}
\newcommand{\ff}{B}
\newcommand{\tK}{\widetilde{K}}
\newtheorem{thm}{Theorem}  [section]
\newtheorem{cor}[thm]{Corollary}
\newtheorem{lem}[thm]{Lemma}
\newtheorem{prop}[thm]{Proposition}
\newtheorem{definition}[thm]{Definition}
\theoremstyle{remark}
\newtheorem{rem}{Remark}[section]
\numberwithin{equation}{section}
\newcommand{\mbf}{\mathbf}
\newcommand{\mrm}{\mathrm}
\newcommand{\A}{\mathcal A}
\newcommand{\ev}{\bar{0}}
\newcommand{\odd}{\bar{1}}
\newcommand{\diag}{\mrm{diag}}
\newcommand{\la}{\lambda}
\newcommand{\F}{\mathbb F}
\newcommand{\N}{\mathbb N}
\newcommand{\K}{\mathbb K}
\newcommand{\id}{\text{id}}
\newcommand{\bbZ}{\mathbb Z}
\newcommand{\one}{\mathbf 1}
\newcommand{\oldone}{\mathbf 1}
\newcommand{\onestar}{\one^\star}
\newcommand{\ov}{\overline}
\newcommand{\qbinom}[2]{\begin{bmatrix} #1\\#2 \end{bmatrix} }
\newcommand{\U}{\mbf U}
\newcommand{\Udot}{\dot{\mbf U}}
\newcommand{\Ui}{{\mbf U}^\imath}
\newcommand{\vs}{\varsigma}
\newcommand{\Z}{\mathbb Z}
\newcommand{\I}{\mathbb I}
\nc{\fprime}{\bold{'f}}
\def \m{{m}}
\def \bU{{\mathbf U}}
\begin{document}

\title[A Serre presentation for the $\imath${}quantum groups]{A Serre presentation for the $\imath${}quantum groups}

\author[Xinhong Chen]{Xinhong Chen}
\address{Department of Mathematics, Southwest Jiaotong University, Chengdu 610031, P.R.China}
\email{chenxinhong@swjtu.edu.cn}

\author[Ming Lu]{Ming Lu}
\address{Department of Mathematics, Sichuan University, Chengdu 610064, P.R.China}
\email{luming@scu.edu.cn}

\author[Weiqiang Wang]{Weiqiang Wang}
\address{Department of Mathematics, University of Virginia, Charlottesville, VA 22904}
\email{ww9c@virginia.edu}

\subjclass[2010]{Primary 17B37,17B67.}

\keywords{Quantum groups, quantum symmetric pairs, Serre relations, bar involution}

\begin{abstract}
Let $(\bf U, \bf U^\imath)$ be a quasi-split quantum symmetric pair of arbitrary Kac-Moody type, where ``quasi-split" means the corresponding Satake diagram contains no black node. We give a presentation of the $\imath${}quantum group $\bf U^\imath$ with explicit $\imath$Serre relations. The verification of new $\imath$Serre relations is reduced to some new $q$-binomial identities. Consequently, $\bf U^\imath$ is shown to admit a bar involution under suitable conditions on the parameters. \end{abstract}
\maketitle

\setcounter{tocdepth}{1}
 \tableofcontents

\section{Introduction}

\subsection{}

Let $\U$ be a Drinfeld-Jimbo quantum group with Chevalley generators $E_i, F_i, K_i^{\pm 1}$, for $i\in I$. It is a $q$-deformation of the universal enveloping algebra of a symmetrizable Kac-Moody algebra with a Serre presentation. In terms of divided powers $F_i^{(n)} =F_i^n/[n]_{q_i}^!$ (cf. \cite{Lu93}; see \eqref{eq:qbinom} and its subsequent paragraph for notation $[n]_{q_i}^!$), the $q$-Serre relations among $F_i$'s can be written in a compact form as follows: for $i \neq j \in I$,
\begin{align}
  \label{eq:qSerre}
\sum_{n=0}^{1-a_{ij}} (-1)^n   F_i^{(n)}F_j F_i^{(1-a_{ij}-n)}=0 .
\end{align}
The quantum group $\U$ is a Hopf algebra with a comultiplication which is denoted by $\Delta$.

Quantum symmetric pairs (QSP for short), $(\U, \Ui)$, are deformations of symmetric pairs which are defined using Satake diagrams as the input, and $\Ui$ satisfies the coideal subalgebra property $\Delta: \Ui \rightarrow \Ui \otimes \U$. The theory of QSP was systematically studied by Letzter for $\U$ of finite type  (cf. \cite{Le99, Le02} for historical remarks and references therein). The QSP of Kac-Moody type was subsequently developed by Kolb \cite{Ko14}, unifying various special cases beyond finite type considered in the literature, some of which we mention below. We remark that the algebra $\Ui=\Ui_{\vs, \kappa}$ actually depends on a number of parameters $\vs =(\vs_i)_{i\in I}, \kappa =(\kappa_i)_{i\in I}$; see \eqref{parameters}. For example, some main generators of $\Ui$ are of the form, cf. \eqref{eq:def:ff}:
\[
B_i =F_{i} + \vs_i E_{\tau i} \tK^{-1}_i + \kappa_i \tK^{-1}_{i}, \quad \text{ for }i\in I,
\]
where the definitions of $\tK_i$ and the involution $\tau$ can be found in \eqref{Q4} and \eqref{eq:tau} respectively.
It has become increasingly clear in recent years (cf. \cite{BW13, BK15, BK19, BW18b} and the references therein) that the algebras $\Ui$ on their own are of fundamental importance, and we shall refer to them as the $\imath${}quantum groups.  

Borrowing terminologies from real Lie groups, we shall call a quantum symmetric pair and an $\imath${}quantum group {\em quasi-split} (and respectively, {\em split}) if the underlying Satake diagram contains no black node (respectively, with the trivial involution in the Satake diagram). In other words, these are the $\imath${}quantum groups associated to the Chevalley involution $\omega$, coupled with a diagram involution $\tau$ (which is allowed to be the identity). Examples of the split $\imath${}quantum groups were considered in the literature (cf., e.g., \cite{Ter93, BaK05}) and they are also known as generalized $q$-Onsager algebras, cf. \cite{BaB10}. We refer to \cite[Introduction, (1)]{Ko14} for more detailed historical remarks. A quasi-split $\imath${}quantum group depends only on the generalized Cartan matrix and a diagram involution $\tau$.

Obtaining a nice presentation of the $\imath${}quantum group $\Ui$ is a fundamental problem, and it has useful applications. For example, to construct the bar involution on a general $\imath${}quantum group $\Ui$ as predicted in \cite{BW13}, one would need to have a {\em precise} presentation to see clearly what constraints on the parameters should be satisfied \cite{BK15}. The bar involution on $\Ui$ is a basic ingredient for the $\imath${}canonical basis \cite{BW18b}.

A presentation for a general $\Ui$ of finite type was given by Letzter \cite{Le02, Le03}.
Some less precise presentation for a general $\Ui$ of Kac-Moody type (where some Serre type relations were not explicit) was known earlier \cite{Ko14}; under the assumption that the Cartan integers $|a_{ij}| \le 3$, all the Serre type relations were found explicitly in terms of monomials in $B_i, B_j$ \cite{Le03, Ko14, BK15, BK19}, even though some of the formulas become complicated quickly as $|a_{ij}|$ increases; see the formulas \eqref{eq:123} in the quasi-split setting. A new and more conceptual approach is called for in order to reorganize and go beyond the known cases.

\subsection{}

The main result of this paper is a Serre presentation with precise and uniform relations for the quasi-split $\imath${}quantum groups of {\em arbitrary} Kac-Moody type with general parameters; see Theorems~\ref{thm:Serre} and \ref{thm:split}. One may view this work as an application of $\imath${}canonical bases to the foundational questions for the $\imath${}quantum groups.

The key to our Serre presentation is the so-called new {\em $\imath${}Serre relations} between $B_i$ and $B_j$ for $\tau i =i\neq j$. They are expressed in terms of the $\imath${}divided powers $B_{i,\overline{p}}^{(m)}$, for any fixed $\overline{p}\in \Z_2 =\{\bar 0, \bar 1\}$, as follows:
\begin{equation}
  \label{eq:iSe}
\sum_{n=0}^{1-a_{ij}} (-1)^n  B_{i,\overline{a_{ij}}+\overline{p}}^{(n)}B_j B_{i,\overline{p}}^{(1-a_{ij}-n)}  =0.
\end{equation}
Note that this relation formally takes the same form as the standard $q$-Serre relation \eqref{eq:qSerre}. Actually the relation \eqref{eq:iSe} holds for general (beyond quasi-split) $\imath$quantum groups; cf. Remark~\ref{rem:split+}. Let us explain the $\imath${}divided powers.

For distinguished parameters $\vs^\diamond$, i.e., $\vs_i^\diamond =q_i^{-1}$ for $i\in I$ such that $\tau i=i$ (cf. \eqref{eq:par0}), and $\kappa_j=0$ for all $j\in I$, the $\imath${}divided powers $B^{(m)}_{i,\ov{p}}$, for $\ov{p}\in \Z_2$ and $m \ge 1$, are explicit polynomials in $B_i$ introduced in \cite{BW13, BeW18} which depend on a parity $\ov p$ (arising from the parities of the highest weights of  highest weight $\U$-modules when evaluated at the coroot $h_i$). The $\imath${}divided powers are $\imath${}canonical basis elements for (the modified form of) $\Ui$ in the sense of \cite{BW18b}, but we will not need this fact here. For $\Ui$ with general parameters, we define $\imath${}divided powers as suitable polynomials in $B_i$ which are obtained via some rescaling isomorphism from those associated to the distinguished parameters; see \eqref{eq:iDPodd}--\eqref{eq:iDPev}. We caution that the $\imath$divided powers for more general parameters may not necessarily be $\imath$canonical basis elements.

It is instructive for the reader to verify that our $\imath${}Serre relations \eqref{eq:iSe} provide a uniform reformulation of the case-by-case complicated relations in \eqref{eq:123} (due to \cite{Ko14, BK19}) when $|a_{ij}|\le 3$. For illustration we convert the $\imath${}Serre relation for $a_{ij}=-4$ into a formula \eqref{eq:a=4} in terms of monomials in $B_i, B_j$, and compare it to \cite[(2.1)]{BaB10}.

The Serre presentation of $\Ui$ is valid in the specialization at $q=1$, providing a presentation of the fixed point Lie subalgebra $\mathfrak g^{\tau \omega}$ of a symmetrizable Kac-Moody algebra $\mathfrak g$, where $\omega$ is the Chevalley involution. Independently, when $\tau=1$ Stokman \cite{St18} gives a presentation of $\mathfrak g^\omega$ and calls it a generalized Onsager algebra, where the (classical) $\imath${}Serre relations are determined recursively; in contrast our formula for the (calssical) $\imath${}Serre is closed. One verifies that our $\imath${}Serre relations for $|a_{ij}|\le 4$ specialize to his formulas; cf. Remark~\ref{rem:q=1}.

\subsection{}

Our strategy of establishing the $\imath${}Serre relations \eqref{eq:iSe} is as follows. As $\Ui$ with arbitrary parameters is isomorphic to $\Ui$ with distinguished parameters $\vs^\diamond$ in \eqref{eq:par0} and $\kappa={\bf 0}$ (cf. \cite{Le02, Ko14}), we are reduced to the case of $\Ui$ with distinguished parameters.  As $\Ui$ is embedded in $\U$, it suffices to verify the identity \eqref{eq:iSe} in the quantum group $\U$, or alternatively, in the modified form $\Udot$.

To that end, the explicit expansion formulas of $\imath${}divided powers in terms of PBW basis of $\Udot$ given in \cite{BeW18} play a crucial role. The identity \eqref{eq:iSe} is reduced to the assertion of various coefficients in the PBW basis expansion of the left-hand side of \eqref{eq:iSe} are zero. After  reorganizations of the computations, the vanishings of all these coefficients somewhat miraculously reduce to  a universal $q$-binomial identity
\[
T(w, u, \ell) =0
\]
in 3 integral variables $w, u, \ell$; see \eqref{eq:Twux} for the definition of $T(w, u, \ell)$.

It turns out to be difficult to prove this $q$-binomial identity directly (the reader is encouraged to try his hand on this). We find a way around by first generalizing the $q$-identity to some $q$-identity involving 3 additional variables, such as, for $\ell >0$,
\[
G(w,u,  \ell;p_0,p_1,p_2)=0.
\]
See \eqref{eq:Twxyu} for definition of $G$.
The additional variables allow us to write down simple recursive relations, and then the (generalized) $q$-identity follows.

\subsection{} 

Let us indicate several applications of the main results of this paper.

The $\imath${}divided powers and $\imath${}Serre relations can be used to describe the higher order $\imath${}Serre relations and construct the braid group actions on $\Ui$; for the quantum group $\U$ this was done in \cite{Lu93}. This will be treated in a sequel to this paper \cite{CLW2}.

The bar involution on a quasi-split $\imath${}quantum group $\Ui$ is an indispensable ingredient for the $\imath${}canonical basis of the modified form of $\Ui$ developed in \cite{BW18c}.

The elegant $\imath${}Serre relations \eqref{eq:iSe} were motivated by and have in turn suggested a potential categorical interpretation \`a la Khovanov-Lauda; they are expected to play a fundamental role in the categorification of the quasi-split (modified) $\imath${}quantum groups.

Having the Serre presentations of quasi-split $\imath${}quantum groups available, one may hope that explicit $\imath${}Serre relations for general $\imath${}quantum groups $\Ui$ will be eventually written down in terms of $\imath${}canonical bases. To that end, more formulas for $\imath${}canonical bases need to be computed first, which are very challenging as the $\imath${}divided powers above already indicate.

\subsection{}

The paper is organized as follows. In Section~ \ref{section:preliminaries} we review and fix notations
for quantum groups and quantum symmetric pairs. 
In Section~\ref{section:main}, we formulate our main results and the main steps of their proofs. The Serre presentation of $\Ui$ can be found in Theorem~\ref{thm:Serre}. The $q$-binomial identity which is used to derive the $\imath${}Serre relations is stated as Theorem~\ref{thm:T=0}. The bar involution on $\Ui$ with suitable conditions on parameters specified is formulated as Proposition~\ref{prop:bar}.

Section \ref{sec:Serre=T}, Section ~\ref{sec:T=0}, and Appendix~\ref{sec:appendix} form the technical parts of the paper. In Section~\ref{sec:Serre=T}, we reduce the proof of the $\imath${}Serre relations to the $q$-binomial identity $T(w, u, \ell) =0$; some additional reduction steps are collected in Appendix~\ref{sec:appendix}. In Section ~\ref{sec:T=0}, we formulate and prove a generalization of the $q$-identity $T(w, u, \ell) =0$; in particular, this identity follows.

\subsection*{Acknowledgments.}
XC is supported partially by the National Natural Science Foundation of China grant No. 11601441 and the Fundamental Research Funds for the Central Universities grant No. 2682016CX109. We thank University of Virginia and East China Normal University for hospitality during our visits when this project is carried out. WW is partially supported by NSF grant DMS-1702254. We thank the 2 anonymous referees for helpful comments and corrections.

\section{The preliminaries}
    \label{section:preliminaries}

\subsection{Quantum groups}
\label{subsec:QG}

We recall the definitions of Cartan datum and root datum from \cite[1.1.1, 2.2.1]{Lu93}.
A \emph{Cartan datum} is a pair $(I,\cdot)$ consisting of a finite set $I$ and a symmetric bilinear form $\nu,\nu'\mapsto \nu\cdot \nu'$ on the free abelian group $\Z[I]$ such that
\begin{itemize}
\item[(a)] $i\cdot i\in\{2,4,6,\dots\}$ for any $i\in I$;
\item[(b)] $2\frac{i\cdot j}{i\cdot i}\in\{0,-1,-2,\dots\}$ for any $i\neq j$ in $I$.
\end{itemize}

A \emph{root datum} of type $(I,\cdot)$ consists of
\begin{itemize}
\item[(a)] two finitely generated free abelian groups $Y,X$ and a perfect bilinear pairing $\langle\cdot,\cdot\rangle:Y\times X\rightarrow\Z$;
\item[(b)] an embedding $I\subset X$ ($i\mapsto \alpha_i$) and an embedding $I\subset Y$ ($i\mapsto h_i$) such that $\langle h_i,\alpha_j\rangle =2\frac{i\cdot j}{i\cdot i}$ for all $i,j\in I$.
\end{itemize}
The matrix $A:= (a_{ij}):=(\langle h_i, \alpha_j\rangle)$ is a \emph{symmetrizable generalised Cartan matrix}.
Let
\[
D=\diag(\epsilon_i\mid i\in I), \qquad \text{ where } \epsilon_i=\frac{i\cdot i}{2}  \; (\forall i\in I).
\]
 Then $DA$ is symmetric.
We shall assume that the root datum defined above is \emph{$X$-regular} and \emph{$Y$-regular}, that is,
$\{\alpha_i\mid i\in I\}$ is linearly independent in $X$ and $\{h_i\mid i\in I\}$ is linearly independent in $Y$.
%

%
Let $q$ be an indeterminate, and denote
\[
q_i:=q^{\epsilon_i}, \qquad \forall i\in I.
\]
For $n,d, m\in \Z$ with $m\ge 0$, we denote the $q$-integers and $q$-binomial coefficients as
\begin{align}
  \label{eq:qbinom}
  \begin{split}
[n] =\frac{q^n-q^{-n}}{q-q^{-1}},
\qquad
[m]! & = [1][2]\cdots [m],
\\
\qbinom{n}{d}  &=
\begin{cases}
\frac{[n][n-1]\ldots [n-d+1]}{[d]!}, & \text{ if }d \ge 0,
\\
0, & \text{ if }d<0.
\end{cases}
\end{split}
\end{align}
We denote by $[n]_{q_i}, [m]_{q_i}^!,$ and $\qbinom{n}{m}_{q_i}$ the variants of $[n], [m]!,$ and $\qbinom{n}{m}$ with $q$ replaced by $q_i$.
For  any $i\neq j\in I$, define the following polynomial in two (noncommutative) variables
\begin{equation}
\label{eq:Sij}
S_{ij}(x,y)=\sum_{n=0}^{1-a_{ij}} (-1)^n \qbinom{1-a_{ij}}{n}_{q_i} x^{n}yx^{1-a_{ij}-n}.
\end{equation}

Let $\K$ be a field of characteristic $0$. 
Assume that a root datum $(Y,X,\langle\cdot,\cdot\rangle,\dots)$ of type $(I,\cdot)$ is given.
The quantum group $\bU$ is the associative $\K(q)$-algebra with generators $E_i,F_i,K_h$ for all $i,j\in I$ and $h\in Y$ subject to the following relations:
\begin{align}
&K_0=1,\; K_hK_{h'}=K_{h+h'}, \quad \forall h,h'\in Y.\label{Q1}
\\
&K_hE_i=q^{\langle h,\alpha_i\rangle} E_iK_h, \quad \forall i\in I, h\in Y.\\
&K_hF_i=q^{-\langle h,\alpha_i\rangle}F_iK_h, \quad \forall i\in I, h\in Y.\\
&[E_i,F_j]=\delta_{ij}\frac{\widetilde{K}_i-\widetilde{K}_i^{-1}}{q_i-q_i^{-1}}, \text{ where }\widetilde{K}_i=K_{h_i}^{\epsilon_i}, \forall i\in I.\label{Q4}\\
& (q\text{-Serre relations})\,\,
S_{ij}(E_i,E_j)=0=S_{ij}(F_i,F_j), \quad \forall i \neq j\in I.\label{q serre}
\end{align}

Let
\[
F_i^{(n)} =F_i^n/[n]_{q_i}^!, \quad E_i^{(n)} =E_i^n/[n]_{q_i}^!, \quad \text{ for } n\ge 1 \text{ and  } i\geq 1.
\]
Then the $q$-Serre relations \eqref{q serre} above
can be rewritten  as follows: for $i\neq j \in I$,
\begin{align*}
\sum_{n=0}^{1-a_{ij}} (-1)^n  E_i^{(n)} E_j E_i^{({1-a_{ij}-n})}=0,
\end{align*}
\begin{align}\label{eq:serre4}
\sum_{n=0}^{1-a_{ij}} (-1)^n  F_i^{(n)}F_j F_i^{(1-a_{ij}-n)}=0 .
\end{align}
Denote by $\omega$ the \emph{Chevalley involution}, which is the $\K(q)$-algebra automorphism of $\bU$ sending
\[
\omega(E_i)=-F_i, \quad \omega(F_i)=-E_i, \quad \omega(K_h)=K_{-h}.
\]
The following lemma is a higher rank generalization of the involution $\varpi$ on $\U_q(\mathfrak{sl}_2)$ defined in \cite[Remark 2.3]{BeW18}.
\begin{lem}   \label{lemma:involution of U}
There exists an involution $\varpi$ on the $\K$-algebra $\U$ which sends
\begin{align}
\varpi: E_i\mapsto q_i^{-1}F_i\tK_i,\quad F_i\mapsto q_i^{-1} E_i\tK_i^{-1},\quad K_\mu\mapsto K_\mu,\quad q\mapsto q^{-1}.
\end{align}
for any $i\in I$, $\mu\in Y$.
\end{lem}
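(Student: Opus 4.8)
The plan is to verify directly that the prescribed assignment $\varpi$ respects every defining relation \eqref{Q1}--\eqref{q serre} of $\bU$, together with the relation $q \mapsto q^{-1}$; since $\bU$ is presented by generators and relations, this suffices to produce a well-defined $\K$-algebra endomorphism, and checking that $\varpi$ squares to the identity on generators then shows it is an involution. Note that $\varpi$ is \emph{not} $\K(q)$-linear because it inverts $q$, so it is only a $\K$-algebra map; in particular one must track the action on every $q$-coefficient throughout. First I would record the easy relations: the Cartan relations \eqref{Q1} are immediate since $\varpi$ fixes each $K_h$, and the relations expressing $K_h E_i$ and $K_h F_i$ follow because $\varpi(E_i)$ and $\varpi(F_i)$ are, up to a scalar and a factor $\tK_i^{\pm1}$, the opposite generators, so the weight under $K_h$ flips sign exactly as $q \mapsto q^{-1}$ demands; here one uses that $\tK_i = K_{h_i}^{\epsilon_i}$ commutes appropriately and that $\langle h, \alpha_i \rangle$ governs the exponent.

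The genuinely computational checks are the commutator relation \eqref{Q4} and the $q$-Serre relations \eqref{q serre}. For \eqref{Q4} I would compute $[\varpi(E_i), \varpi(F_j)] = q_i^{-1} q_j^{-1}[F_i \tK_i, E_j \tK_j^{-1}]$, push the $\tK$ factors past the root vectors using the commutation relations (picking up powers of $q$ determined by $a_{ij}$), and confirm that for $i \neq j$ the bracket vanishes while for $i = j$ it reproduces $\delta_{ij}(\tK_i - \tK_i^{-1})/(q_i - q_i^{-1})$ after applying $q \mapsto q^{-1}$; the key point is that inverting $q$ flips the sign of the denominator $q_i - q_i^{-1}$ and simultaneously swaps the roles of $\tK_i$ and $\tK_i^{-1}$, so the two sign changes cancel and the relation is preserved. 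For the $q$-Serre relations I would exploit their symmetry: since $\varpi$ sends $E_i \mapsto q_i^{-1} F_i \tK_i$ and conjugation by $\tK_i$ together with the scalar $q_i^{-1}$ amounts to a rescaling of the $F$-variables, applying $\varpi$ to $S_{ij}(E_i, E_j)$ should yield a scalar multiple of $S_{ij}(F_i, F_j)$ after one normalizes the powers of $\tK$ and accounts for $q \mapsto q^{-1}$ inside the $q$-binomial coefficients $\qbinom{1-a_{ij}}{n}_{q_i}$, which are invariant under $q_i \mapsto q_i^{-1}$.

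The main obstacle I anticipate is the bookkeeping of the $q$-powers and the $\tK_i$-factors in the Serre relation: when $\varpi(E_i)^n$ is expanded, the noncommuting factors $F_i \tK_i$ must be straightened into $q^{c}\, F_i^n \tK_i^n$ for an explicit exponent $c$ depending on $i\cdot i$ and $n$, and one must verify that after this straightening the monomials $\varpi(E_i^n E_j E_i^{1-a_{ij}-n})$ differ from $F_i^n F_j F_i^{1-a_{ij}-n}$ by a single overall scalar independent of $n$, so that the alternating sum defining $S_{ij}$ is preserved up to that scalar and hence still vanishes. Because the involution on $\U_q(\mathfrak{sl}_2)$ from \cite[Remark 2.3]{BeW18} is asserted to be the rank-one case, I would cross-check my normalization against that reference to fix the scalar $q_i^{-1}$ correctly. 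Once all relations are verified, showing $\varpi^2 = \id$ is a short direct computation on generators: applying $\varpi$ twice to $E_i$ returns $q_i \cdot q_i^{-1} E_i \tK_i^{-1} \cdot \tK_i = E_i$ after the second $q\mapsto q^{-1}$, and similarly for $F_i$ and $K_\mu$, completing the proof that $\varpi$ is an involution.
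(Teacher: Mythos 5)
Your proposal is correct and follows essentially the same route as the paper: verify that $\varpi$ preserves each defining relation, with the only substantive check being the $q$-Serre relations, where straightening $(F_i\tK_i)^{n}(F_j\tK_j)(F_i\tK_i)^{1-a_{ij}-n}$ produces an overall scalar independent of $n$ (in fact the $q$-powers cancel completely, leaving $F_i^{n}F_jF_i^{1-a_{ij}-n}\tK_i^{1-a_{ij}}\tK_j$), so the alternating sum still vanishes. The paper's proof is just a terser version of this, taking the rank-one relations as known and writing out only the Serre computation.
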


\begin{proof}
Knowing that the rank one relations are preserved, we see quickly that $\varpi$ preserves the defining relations \eqref{Q1}--\eqref{Q4} for $\U$. It remains to show that $\varpi$ preserves the $q$-Serre relations \eqref{q serre}: $S_{ij} (q_i^{-1}F_i\tK_i, q_j^{-1}F_j\tK_j)=0$ and $S_{ij} (q_i^{-1}E_i\tK_i^{-1}, q_j^{-1}E_j\tK_j^{-1})=0$, for $i\neq j\in I$. This is known, and for the sake of completeness let us include a short argument:
\begin{align*}
S_{ij} (q_i^{-1}F_i\tK_i, q_j^{-1}F_j\tK_j)
&= q_i^{a_{ij}-1}q_j^{-1} \sum_{n=0}^{1-a_{ij}} (-1)^n \qbinom{1-a_{ij}}{n}_{q_i} (F_i\tK_i)^{n} (F_j\tK_j) (F_i\tK_i)^{1-a_{ij}-n}
\\
&= q_i^{a_{ij}-1}q_j^{-1} S_{ij}(F_i,F_j) \tK_i^{1-a_{ij}}\tK_j =0.
\end{align*}
The other relation is entirely similar.
\end{proof}

Let $\bU^+,\bU^-$ and $\bU^0$ be the subalgebra of $\bU$ generated by
$\{E_i\mid i\in I\}$, $\{F_i\mid i\in I\}$ and $\{K_h\mid h\in Y\}$ respectively.

\subsection{The algebra $\Udot$}
   \label{subsection Udot}

Recall \cite[23.1]{Lu93} that  the modified form of $\bU$, denoted by $\Udot$, is a $\K(q)$-algebra (without 1)  generated by $\oldone_\la, E_i \oldone_\la$, $F_i \oldone_\la$, for $i\in I, \la \in X$, where $\oldone_\la$ are orthogonal idempotents. Let
$
\A=\Z[q,q^{-1}].
$
There is an $\A$-subalgebra  $_\A \Udot$ generated by $E_i^{(n)}\oldone_\la, F_i^{(n)}\oldone_\la$ for $i\in I$ and $n\geq 0$ and $\la \in X$.
Note that $\Udot$ is naturally a $\bU$-bimodule \cite[23.1.3]{Lu93}, and in particular we have
\begin{align*}
K_h\oldone_\la=\oldone_\la K_h &=q^{\langle h,\la\rangle}\oldone_\la, \; \forall h\in Y.
\end{align*}

We have the mod $2$ homomorphism $\Z \rightarrow \Z_2, k \mapsto \ov k$, where $\Z_2=\{\bar{0},\bar{1}\}$. Let us fix an $i\in I$.  Define
\begin{equation}
\Udot_{i,\ev}:=\bigoplus_{\la:\, \langle h_i,\la\rangle \in 2\Z}\,\Udot \oldone_{\la},
\qquad
\Udot_{i,\odd}:=\bigoplus_{\la:\, \langle h_i,\la\rangle \in1+2\Z}\, \Udot\oldone_{\la}.
\end{equation}
Then $\Udot= \,\Udot_{i,\ev}\oplus{\Udot_{i,\odd}}$.
Similarly, letting $_\A\Udot_{i,\ev} =\Udot_{i,\ev} \cap _\A\Udot$ and $_\A\Udot_{i,\odd} =\Udot_{i,\odd} \cap _\A\Udot$, we have $_\A\Udot = {}_\A\Udot_{i,\ev} \oplus {}_\A\Udot_{i,\odd}$.

For our later use, with $i\in \I$ fixed once for all, we need to keep track of the precise value $\langle h_i,\la\rangle$ in an idempotent $\oldone_\la$ but do not need to  know which specific weights $\la$ are used. Thus it is convenient to introduce the following generic notation
\begin{align}
 \label{eq:1star}
\onestar_m =\onestar_{i,m}, \qquad \text{ for }m\in \Z,
\end{align}
to denote an idempotent $\oldone_\la$ for some $\la\in X$ such that $\m=\langle h_i,\la\rangle$.
In this notation, the identities in \cite[23.1.3]{Lu93} can be written as follows: for any $\m\in\Z$, $a,b\in\Z_{\geq0}$, and $i\neq j\in I$,
\begin{align}
E_i^{(a)}\onestar_{i,\m} &=\onestar_{i,\m+2a} E_i^{(a)},\quad F_i^{(a)}\onestar_{i,\m}=\onestar_{i,\m-2a} F_i^{(a)};
\label{eqn: idempotent Ei Fi}\\
E_j\onestar_{i,\m} &=\onestar_{i,\m+a_{ij}}E_j,  \qquad F_j\onestar_{i,\m}=\onestar_{i,\m-a_{ij}} F_j;
  \label{eqn: idempotent Ej Fj}\\
F_{i}^{(a)} E_i^{(b)}\onestar_{i,\m} &= \sum_{j=0}^{\min\{a,b\}} \qbinom{a-b-\m}{j}_{q_i} E_i^{(b-j)} F_i^{(a-j)}\onestar_{i,\m};
  \label{eqn:commutate-idempotent3}\\
E_i^{(a)} F_i^{(b)}\onestar_{i,\m} &=\sum_{j=0}^{\min\{a,b\}} \qbinom{a-b+\m}{j}_{q_i} F_i^{(b-j)} E_i^{(a-j)}\onestar_{i,\m}
  \label{eqn:commutate-idempotent4}.
\end{align}
From now on, we shall always drop the index $i$ to write the idempotents as $\onestar_m$.

\begin{rem}
  \label{rem:u=0}
If $u\in\U$ satisfies $u\onestar_{2k-1}=0$ for all possible idempotents $\onestar_{2k-1}$ with $k\in\Z$ (or respectively,  $u\onestar_{2k}=0$ for all possible $\onestar_{2k-1}$ with $k\in\Z$), then $u=0$.
\end{rem}

\subsection{The $\imath${}quantum group $\Ui$}
  \label{subsec:irootdatum}

Let $(Y,X, \langle\cdot,\cdot\rangle, \cdots)$ be a root datum of type $(I, \cdot)$.  We call a permutation $\tau$ of the set $I$ an {\em involution} of the Cartan datum $(I, \cdot)$ if $\tau^2 =\id$ and $\tau i \cdot \tau j = i \cdot j$ for $i$, $j \in I$.  Note we allow $\tau =\id$.
We shall always assume that $\tau$ extends to  an involution on $X$ and an involution on $Y$ (also denoted by $\tau$), respectively, such that the perfect bilinear pairing is invariant under the involution $\tau$.
The permutation $\tau$ of $I$ induces an $\K(q)$-algebra automorphism of $\U$, 
defined by
\begin{align}
 \label{eq:tau}
\tau:E_i \mapsto E_{\tau i}, \quad F_i \mapsto F_{\tau i}, \quad K_h \mapsto K_{\tau h}, \qquad \forall i\in I, h\in Y.
\end{align}

Define 
\begin{align}
  \label{XY}
 \begin{split}
Y^{\imath} &= \{h \in Y  \mid \tau(h)=-h \}.
\end{split}
\end{align}

In this paper we will only consider a subclass of quantum symmetric pairs defined in \cite{Le99, Ko14}
 (which correspond to Satake diagrams without black nodes).

\begin{definition}  [\cite{Le99, BaB10, Ko14}]
  \label{def:Ui}
The {\em quasi-split $\imath${}quantum group}, denoted by $\Ui_{\vs,\kappa}$ or $\Ui$, is the $\K(q)$-subalgebra of $\U$ generated by
\begin{align}
B_i :=F_{i}  &+ \vs_i E_{\tau i} \tK^{-1}_i + \kappa_i \tK^{-1}_{i} \, \,(i \in I),
 \qquad K_{\mu}\, \,(\mu \in Y^{\imath}).
  \label{eq:def:ff}
\end{align}
Here the parameters
\begin{equation}
  \label{parameters}
  \vs=(\vs_i)_{i\in I}\in (\K(q)^\times)^I,\qquad
  \kappa=(\kappa_i)_{i\in I}\in\K(q)^I
\end{equation}
are assumed to satisfy Conditions \eqref{kappa}--\eqref{vs=} below:
\begin{align}
 \label{kappa}
\kappa_i &=0 \; \text{ unless } \tau i =i \text{ and } \langle h_k,\alpha_i \rangle \in 2\Z \; \forall k = \tau(k);
\\
%
\vs_{i} & =\vs_{{\tau i}} \text{ if }    a_{i,\tau i} =0.
\label{vs=}
\end{align}
\end{definition}
The conditions on the parameters ensure that $\Ui$ has the expected size.

$\triangleright$ The pair $(\U, \Ui)$ forms a {\em quantum symmetric pair} (QSP) \cite{Le99, Ko14}, as its $q\mapsto 1$ limit is the classical symmetric pair and $\Ui$ is a (right) coideal subalgebra of $\U$, i.e., $\Delta: \Ui \longrightarrow \Ui \otimes \U$.

$\triangleright$ The $\imath${}quantum group $\Ui$ is also called a quantum symmetric pair coideal subalgebra in some papers.

$\triangleright$ We refer to this subclass of QSP $(\U, \Ui)$ or $\Ui$ in Definition~\ref{def:Ui} as {\em quasi-split},  which correspond to Satake diagrams without black nodes.

$\triangleright$ We call QSP $(\U, \Ui)$ or $\Ui$ above {\em split} if in addition $\tau=\id$, borrowing terminologies from the literature of real groups. They are also known as generalized $q$-Onsager algebras, cf. \cite{BaB10}.
In case when $\U$ is the quantum affine $\mathfrak{sl}_2$, $\Ui$ was known as $q$-Onsager algebras; cf. \cite{Ter93, BaK05}. Note that s split $\Ui$ is generated only by $B_i \, \,(i \in I)$.

\section{A Serre presentation of $\Ui$ and a $q$-binomial identity}
  \label{section:main}

\subsection{$\imath${}divided powers}

Assume that a root datum $(Y,X, \langle\cdot,\cdot\rangle, \cdots)$ is given.
Let $\Ui =\Ui_{\vs,\kappa}$ be an $\imath${}quantum group with parameters $(\vs, \kappa)$; cf. \S\ref{subsec:irootdatum}.


For  $i\in I$ with $\tau i\neq i$, imitating Lusztig's divided powers,  we define the {\em $\imath${}divided power} of $B_i$ to be (cf. \cite[(2.2)]{BW13})
\begin{align}
  \label{eq:iDP1}
  B_i^{(m)}:=B_i^{m}/[m]_{q_i}^!, \quad \forall m\ge 0, \qquad (\text{if } i \neq \tau i).
\end{align}
%
For $i\in I$ with $\tau i= i$, generalizing \cite{BW13}, we define the {\em $\imath${}divided powers} of $B_i$ to be
\begin{eqnarray}
&&\ff_{i,\odd}^{(m)}=\frac{1}{[m]_{q_i}^!}\left\{ \begin{array}{ccccc} B_i\prod_{j=1}^k (B_i^2-q_i\vs_i[2j-1]_{q_i}^2 ) & \text{if }m=2k+1,\\
\prod_{j=1}^k (B_i^2-q_i\vs_i[2j-1]_{q_i}^2) &\text{if }m=2k; \end{array}\right.
  \label{eq:iDPodd}\\
&&\ff_{i,\ev}^{(m)}= \frac{1}{[m]_{q_i}^!}\left\{ \begin{array}{ccccc} B_i\prod_{j=1}^k (B_i^2-q_i\vs_i[2j]_{q_i}^2 ) & \text{if }m=2k+1,\\
\prod_{j=1}^{k} (B_i^2-q_i\vs_i[2j-2]_{q_i}^2) &\text{if }m=2k, \end{array}\right.
 \label{eq:iDPev}.
\end{eqnarray}
In case when the parameter $\vs_i=q_i^{-1}$, the formulas \eqref{eq:iDPodd}--\eqref{eq:iDPev} first appeared in \cite[Conjecture~ 4.13]{BW13} (where $\kappa_i=1$) and were then studied in depth in \cite{BeW18} (where $\kappa_i=0, 1$). We shall see the formulas \eqref{eq:iDPodd}--\eqref{eq:iDPev} for general parameter $\vs_i$ arise from some rescaling isomorphism.

\subsection{A Serre presentation of $\Ui$}\label{subsection:iserre presentation}

Denote
\[
(a;x)_0=1, \qquad (a;x)_n =(1-a)(1-ax)  \cdots (1-ax^{n-1}), \quad \forall n\ge 1.
\]
Now we state our first main result. Let us fix $\ov{p}_i\in \Z_2$ for each $i\in I$.

\begin{thm}\label{thm:Serre}
The $\K(q)$-algebra $\Ui_{\vs,\kappa}$ has a presentation with generators $B_i$ $(i\in I)$, $K_\mu$ $(\mu\in Y^\imath)$ and the relations \eqref{relation1}--\eqref{relation2} below: for $\mu,\mu'\in Y^{\imath}$ and $i\neq j \in I$,
\begin{align}
K_{\mu}K_{-\mu} &=1,
 \quad
K_\mu K_{\mu'}=K_{\mu+\mu'},   \label{relation1}
\\
K_\mu B_i-q_i^{-\langle \mu,\alpha_i\rangle} B_iK_\mu & =0,
\\
B_iB_{j}-B_jB_i &=0, \quad \text{ if }a_{ij} =0 \text{ and }\tau i\neq j,
\\
\sum_{n=0}^{1-a_{ij}} (-1)^nB_i^{(n)}B_jB_i^{(1-a_{ij}-n)} &=0, \quad \text{ if } j \neq \tau i\neq i,
\\
\sum_{n=0}^{1-a_{i,\tau i}} (-1)^{n+a_{i,\tau i}} B_i^{(n)}B_{\tau i}B_i^{(1-a_{i,\tau i}-n)}& =\frac{1}{q_i-q_i^{-1}}
\label{relation5}     \\
 \cdot \left(q_i^{a_{i,\tau i}} (q_i^{-2};q_i^{-2})_{-a_{i,\tau i}} \vs_{\tau i}B_i^{(-a_{i,\tau i})} \tK_i \tK_{\tau i}^{-1} \right.
 & \left. -(q_i^{2};q_i^{2})_{-a_{i,\tau i}}\vs_{i}B_i^{(-a_{i,\tau i})} \tK_{\tau i} \tK_i^{-1} \right),
\text{ if } \tau i \neq i,
 \notag \\
\sum_{n=0}^{1-a_{ij}} (-1)^n  B_{i,\overline{a_{ij}}+\overline{p_i}}^{(n)}B_j B_{i,\overline{p}_i}^{(1-a_{ij}-n)} &=0,\quad   \text{ if }\tau i=i.
\label{relation2}
\end{align}
(This presentation will be called a {\em Serre presentation} of $\Ui$.)
\end{thm}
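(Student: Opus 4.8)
The plan is to establish this presentation by the standard two-part strategy used for Serre-type presentations of coideal subalgebras. Let $\Ui'$ denote the abstract $\K(q)$-algebra defined by generators $B_i, K_\mu$ and relations \eqref{relation1}--\eqref{relation2}. Since all of these relations are known to hold among the actual elements $B_i, K_\mu \in \U$ (the commutation and rank-one relations are immediate from Definition~\ref{def:Ui}, while the genuine $\imath$Serre relations are the content of the paper's main computational effort, reduced eventually to the $q$-binomial identity $T(w,u,\ell)=0$), there is a well-defined surjective algebra homomorphism $\phi\colon \Ui' \twoheadrightarrow \Ui$ sending each abstract generator to its namesake in $\U$. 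The real work is to prove $\phi$ is injective, i.e.\ that the listed relations already exhaust all relations among the $B_i$ and $K_\mu$. First I would use the reduction, recalled in the Introduction, that $\Ui_{\vs,\kappa}$ with arbitrary parameters is isomorphic (via a rescaling isomorphism together with the translation to $\kappa={\bf 0}$) to $\Ui$ with the distinguished parameters $\vs^\diamond$; this lets me verify the relations only in the distinguished case and transport them back, which is exactly how the $\imath$divided powers \eqref{eq:iDPodd}--\eqref{eq:iDPev} are set up.

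For surjectivity-of-relations, i.e.\ injectivity of $\phi$, I would invoke the already-known (though ``less precise'') presentation of $\Ui$ of Kac--Moody type due to Kolb \cite{Ko14} and Letzter \cite{Le02}: it provides generators $B_i, K_\mu$ subject to relations \eqref{relation1}--\eqref{relation5} together with some Serre-\emph{type} relations that are a priori only implicitly specified in the $\tau i = i$ case. The strategy is therefore not to reprove the existence of a presentation from scratch, but to show that our explicit relation \eqref{relation2} \emph{generates the same ideal} as Kolb's unspecified Serre-type relations. Concretely, I would argue that Kolb's presentation already guarantees $\Ui'$ (built from \eqref{relation1}--\eqref{relation2}) surjects onto $\U$ with the correct kernel provided \eqref{relation2} both (i) holds in $\U$ and (ii) suffices to put any element of $\Ui'$ into the triangular/PBW normal form that Kolb's presentation yields. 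Part (i) is the main theorem's computation; part (ii) requires checking that the highest-degree component of \eqref{relation2} recovers the classical $q$-Serre relation \eqref{eq:qSerre} for $B_i$, ensuring no extra relations are needed at leading order.

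The crux, then, is verifying (i): that \eqref{relation2} genuinely holds in $\U$. Here I would follow the paper's own roadmap. Because $\Ui \hookrightarrow \U$, it suffices to check \eqref{eq:iSe} inside $\U$, or equivalently inside the modified form $\Udot$, testing against each idempotent $\onestar_m$ as licensed by Remark~\ref{rem:u=0}. I would substitute the explicit PBW-expansion formulas for the $\imath$divided powers $B_{i,\ov p}^{(m)}$ from \cite{BeW18} into the left-hand side of \eqref{eq:iSe}, collect the coefficient of each PBW monomial $F_i^{(a)} E_i^{(b)} \cdots$, and reduce each such coefficient, using the commutation identities \eqref{eqn:commutate-idempotent3}--\eqref{eqn:commutate-idempotent4}, to an instance of the universal $q$-binomial identity $T(w,u,\ell)=0$ of Theorem~\ref{thm:T=0}. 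Granting that identity (itself proved separately via the six-variable generalization $G(w,u,\ell;p_0,p_1,p_2)=0$), all coefficients vanish and \eqref{eq:iSe} follows.

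The hard part will be the bookkeeping in step (i): tracking how the various PBW monomials arising from the product $B_{i,\overline{a_{ij}}+\overline{p_i}}^{(n)} B_j B_{i,\overline{p}_i}^{(1-a_{ij}-n)}$ recombine across the alternating sum so that the net coefficient of each monomial collapses into a single instance of $T$. The parity-shift in the subscripts $\overline{a_{ij}}+\overline{p_i}$ versus $\overline{p_i}$ is essential and delicate: it is precisely what makes the odd/even $\imath$divided powers interleave correctly, and getting this index juggling exactly right (together with the boundary terms of the sum, which must cancel rather than produce the $\tau i \neq i$ right-hand side of \eqref{relation5}) is where I expect the argument to demand the most care. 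By contrast, the abstract homomorphism $\phi$ and the appeal to Kolb's presentation for injectivity are comparatively routine once (i) is in hand.
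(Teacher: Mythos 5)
Your proposal is correct and follows essentially the same route as the paper: reduce to the distinguished parameters $\vs^\diamond$ via the rescaling isomorphism of Proposition~\ref{prop:morphism}, invoke the known implicit presentation of \cite{Le02, Ko14, BK15} so that only the $\imath$Serre relation \eqref{relation2} needs to be established, verify that relation inside $\Udot$ by expanding the $\imath$divided powers in PBW form and reducing all coefficients to the identity $T(w,u,\ell)=0$, and conclude that \eqref{relation2} coincides with Kolb's unspecified relation because both have leading term $S_{ij}(B_i,B_j)$ and the lower-order terms are forced by the filtration whose associated graded is $\U^-$. This last uniqueness step is exactly the argument the paper records in the proof of Corollary~\ref{cor:S01}, so no genuinely different ideas are introduced.
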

A proof of Theorem~\ref{thm:Serre} will be presented at the end of this section, \S\ref{subsec:proofSerre}.

\begin{rem}
  \label{rem:kappa}
There is a presentation of $\Ui$ in \cite[Theorem 7.4]{Le03} for finite type and a less precise one in \cite[Theorem~7.1]{Ko14} for Kac-Moody type, where relations \eqref{relation5}-\eqref{relation2} were replaced by some implicit identities of the form
\[
\sum_{n=0}^{1-a_{ij}} (-1)^n \qbinom{1-a_{ij}}{n}_{i} B_i^nB_jB_i^{1-a_{ij}-n} =C_{ij}
\]
where $C_{ij}$ are some suitable unspecified lower terms, for $j\neq i \in I$ with $\tau i=i$; see \cite[Theorem 3.6]{BK15} for an update, which establishes the explicit relation \eqref{relation5}. It follows by \cite[Theorem~ 7.1]{Ko14} that a  presentation of $\Ui$ in terms of $B_i$ and $K_\mu$ is independent of the parameters $\kappa_i \; (i\in I)$.

The \emph{$\imath${}Serre relation} \eqref{relation2} is a main novelty of this paper.
 \end{rem}

\begin{rem}
  \label{rem:aij123}
Under the assumption
\begin{equation}
  \label{eq:aij}
a_{ij}\in\{0,-1,-2,-3\}, \qquad \forall i\neq j\in I,
\end{equation}
a Serre presentation of $\Ui$ of Kac-Moody type has been given in \cite[Theorems 7.4, 7.8]{Ko14} (for $a_{ij} \in \{0,-1,-2\}$) and \cite[Theorem 3.7]{BK15} (for $a_{ij}=-3$),  where the $\imath${}Serre relation \eqref{relation2} was replaced by the following relations (some sign typos in \cite[Theorem 3.7]{BK15} when $a_{ij}=-3$ are corrected here):
\begin{align}
&\sum_{n=0}^{1-a_{ij}} (-1)^n \qbinom{1-a_{ij}}{n}_{q_i} B_i^{n}B_j B_i^{1-a_{ij}-n}
\label{eq:123} \\
&\quad =\left\{ \begin{array}{llc}
q_i\vs_i B_j,&\text{ if }a_{ij}=-1;\\
{} -[2]_{q_i}^2 q_i\vs_i (B_iB_j-B_jB_i),& \text{ if }a_{ij}=-2;
\\
{} -[2]_{q_i}([2]_{q_i}[4]_{q_i}+q_i^2+q_i^{-2})q_i\vs_i  B_iB_jB_i
& \\
\qquad +([3]_{q_i}^2+1)q_i\vs_i  (B_i^2B_j+B_jB_i^2)
-[3]_{q_i}^2(q_i\vs_i)^2B_j, & \text{ if }a_{ij}=-3.
\end{array} \right.
 \notag
\end{align}

We leave it to the reader to convert these complicated formulas to \eqref{relation2}, in 2 different forms with $\ov{p}_i\in \{\ov 0, \ov 1\}$, hence verifying Corollary~\ref{cor:S01} below directly in these cases.
\end{rem}

For $a_{ij}=-4$, the $\imath${}Serre relation \eqref{relation2} can be converted to
\begin{align}
 \sum_{n=0}^{5} (-1)^n  \qbinom{5}{n}_{q_i} B_i^{n}B_j B_i^{5-n}
 & = -[2]_{q_i}^2 (1+[2]_{q_i^2}^2)q_i\vs_i (B_i^3B_j -B_jB_i^3)
 \label{eq:a=4}
  \\
& \quad +[2]_{q_i}^2[5]_{q_i}[3]_{q_i} q_i\vs_i (B_i^2B_jB_i -B_iB_jB_i^2)  \notag
\\
&\quad +[2]_{q_i}^2[4]_{q_i}^2 (q_i\vs_i)^2(B_iB_j-B_jB_i).  \notag
\end{align}
This formula is compatible with \cite[(2.1)]{BaB10}, if the scalars $\rho_{ij}^0, \rho_{ij}^1$ therein are chosen to be $\rho_{ij}^0=-[2]_{q_i}^2 (1+[2]_{q_i^2}^2)q_i\vs_i$, and $\rho_{ij}^1=[2]_{q_i}^2[4]_{q_i}^2(q_i\vs_i)^2$.

\begin{rem}  \label{rem:q=1}
Let $\mathfrak g$ denote the Kac-Moody algebra associated to the generalized Cartan matrix $A=(a_{ij})$. Theorem~\ref{thm:Serre} specializes at $q=1$ to a presentation of the fixed point Lie subalgebra $\mathfrak g^{\tau \omega}$ of $\mathfrak g$, where $\omega$ is the Chevalley involution and $\tau$ is a Dynkin diagram involution. When $\tau=1$, a presentation for $\mathfrak g^{\omega}$ (called a generalized Onsager algebra) was independently obtained in a recent paper \cite{St18}, where the $\imath${}Serre relations are given by some recursive formulas. One verifies directly that the above formulas for $|a_{ij}|=3,4$ specialize at $q=1$ to \cite[(2.10)]{St18}. These Serre-type formulas (at $q=1$) must be a priori compatible by a uniqueness argument similar to the proof of Corollary~\ref{cor:S01} below.
\end{rem}

\begin{rem}
  \label{rem:split+}
The new $\imath${}Serre relations \eqref{relation2} remain valid for general $\imath${}quantum groups which are not quasi-split. More precisely they are valid under the assumption $\tau i=i$, $w_{\bullet} i=i$, but allowing possibly $w_{\bullet} \alpha_j\neq \alpha_j$; in this case, $B_j =F_j  + \vs_j \texttt{T}_{w_\bullet} (E_{\tau j}) \tK^{-1}_j + \kappa_i \tK^{-1}_{j}$; see \cite[Definition 3.5]{BW18b} for notations. See Remark~\ref{rem:equiv+} for an outline of a proof.
 \end{rem}

\begin{cor}
  \label{cor:S01}
For $j \neq i\in I$ with $\tau i=i$, we have
\begin{align}
   \label{eq:0=1}
\sum_{n=0}^{1-a_{ij}} (-1)^n  B_{i,\ov{a_{ij}}}^{(n)}B_j B_{i,\ov{0}}^{(1-a_{ij}-n)}
=\sum_{n=0}^{1-a_{ij}} (-1)^n  B_{i,\overline{a_{ij}+1}}^{(n)}B_j B_{i,\ov{1}}^{(1-a_{ij}-n)}
\end{align}
as polynomials in non-commutative variables $B_i$ and $B_j$.
\end{cor}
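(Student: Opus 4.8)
The plan is to prove \eqref{eq:0=1} by a uniqueness argument that upgrades the vanishing of each side \emph{in} $\Ui$ to an identity of \emph{free} noncommutative polynomials. Set $N=1-a_{ij}$ and let $W$ be the $\K(q)$-span, inside the free algebra on the noncommuting symbols $B_i,B_j$, of the monomials $B_i^aB_jB_i^b$ with $a+b\le N$ (each linear in $B_j$). Since each $\imath$divided power $B_{i,\ov p}^{(m)}$ is a polynomial in $B_i$ of degree $m$ with leading coefficient $1/[m]_{q_i}^!$, both sides of \eqref{eq:0=1} lie in $W$. Moreover, by Theorem~\ref{thm:Serre} applied with the parity choices $\ov p_i=\ov 0$ and $\ov p_i=\ov1$ respectively, the $\imath$Serre relation \eqref{relation2} shows that both sides evaluate to $0$ under the algebra homomorphism $\mathrm{ev}\colon W\to\Ui$, $B_i,B_j\mapsto B_i,B_j$. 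The point of the phrase ``as polynomials'' is precisely that this does not yet give the claim, since $\mathrm{ev}$ is far from injective.

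The heart of the argument is therefore to show that $\ker(\mathrm{ev})$, viewed inside $W$, is \emph{one-dimensional}. I would do this with the standard filtration on $\U$ (restricting to $\Ui$) for which $B_i$ has leading term $F_i$ and the associated graded of $\Ui$ is identified with $\U^-$, so that $B_i^aB_jB_i^b$ degenerates to $F_i^aF_jF_i^b$, sitting in the weight space $-(a+b)\alpha_i-\alpha_j$. Given a nonzero $R=\sum_{a+b\le N}c_{a,b}B_i^aB_jB_i^b\in\ker(\mathrm{ev})$, let $s_0$ be the largest value of $a+b$ with some $c_{a,b}\ne0$; passing to the associated graded produces the relation $\sum_{a+b=s_0}c_{a,b}F_i^aF_jF_i^b=0$ in $\U^-$. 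For $s_0<N$ the monomials $F_i^aF_jF_i^b$ with $a+b=s_0$ are linearly independent in $\U^-$ (the $q$-Serre relation \eqref{q serre} first occurs in weight $-N\alpha_i-\alpha_j$), forcing all these $c_{a,b}=0$ and contradicting the choice of $s_0$; hence $s_0=N$, and in weight $-N\alpha_i-\alpha_j$ the unique relation is $q$-Serre, so $(c_{a,b})_{a+b=N}$ is a scalar multiple of $\big((-1)^a\qbinom{N}{a}_{q_i}\big)$. Subtracting that multiple of the $\imath$Serre relation for parity $\ov 0$ yields a new kernel element of strictly smaller top degree, which must vanish by the same independence. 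Thus $\ker(\mathrm{ev})$ is spanned by a single $\imath$Serre relation.

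With one-dimensionality in hand, both sides of \eqref{eq:0=1} are nonzero elements of $\ker(\mathrm{ev})$ --- nonzero because the coefficient of $B_jB_i^{N}$ is $1/[N]_{q_i}^!$ on each side --- hence proportional; comparing that coefficient, which on either side arises solely from the $n=0$ term, fixes the proportionality constant to be $1$ and gives the stated polynomial identity. I expect the one-dimensionality of $\ker(\mathrm{ev})$, i.e.\ the uniqueness of the $\imath$Serre relation, to be the main obstacle: it rests on having the filtration with $\mathrm{gr}\,\Ui\cong\U^-$ together with the classical fact that in weight $-N\alpha_i-\alpha_j$ the only relation among the $F_i^aF_jF_i^b$ is the $q$-Serre relation, while in lower weights they are linearly independent.
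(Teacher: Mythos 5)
Your proof is correct and follows essentially the same route as the paper: both arguments rest on Theorem~\ref{thm:Serre} for the two parities together with the filtration on $\Ui$ whose associated graded is $\U^-$, so that a spurious relation among the $B_i^aB_jB_i^b$ would descend to a relation among the $F_i^aF_jF_i^b$ below the $q$-Serre degree. The paper just subtracts the two sides directly (so the leading $S_{ij}$ parts cancel and only the absence of lower-degree relations in $\U^-$ is needed), whereas you package the same idea as a one-dimensionality statement for the kernel, which additionally uses that the relation space in weight $-(1-a_{ij})\alpha_i-\alpha_j$ is exactly one-dimensional; both are fine.
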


\begin{proof}
Let us denote the LHS and RHS of \eqref{eq:0=1} by $\mathcal S_{ij,0}$ and $\mathcal S_{ij,1}$, respectively. Note as polynomials in $B_i, B_j$, $[1-a_{ij}]_{q_i}^! \mathcal S_{ij,p}$ (for $p=0,1$) is of the form $S_{ij}(B_i, B_j) -C_{ij;p}$ (see \eqref{eq:Sij}, \eqref{eq:iDPodd}--\eqref{eq:iDPev} for notations), where $C_{ij;p}$ are some polynomials in $B_i, B_j$ over $\U^{\imath,0}$ of degree lower than $\deg S_{ij}(B_i, B_j)$; here $\U^{\imath,0}$ denotes the $\K(q)$-subalgebra of $\Ui$ generated by $K_{\mu}\, \,(\mu \in Y^{\imath})$.

Hence $[1-a_{ij}]_{q_i}^!(\mathcal S_{ij,0} - \mathcal S_{ij,1}) = C_{ij,0} -C_{ij,1}$ is a polynomial in $B_i, B_j$ over $\U^{\imath,0}$ of degree $< 2-a_{ij}$. By Theorem~\ref{thm:Serre}, $\mathcal S_{ij,p}=0$, for $p=0,1$, are relations in $\Ui$, and so is $C_{ij,0} -C_{ij,1}=0$. Recall from \cite[\S 7]{Ko14} that $\Ui$ has a filtration (roughly speaking by regarding $F_i$ as the highest term of $B_i$ \eqref{eq:def:ff}), whose associated graded is $\U^-$ ( over $\U^{\imath,0}$). If $C_{ij,0} -C_{ij,1}$ were a nonzero polynomial in $B_i, B_j$, then the relation $C_{ij,0} -C_{ij,1}=0$ in $\Ui$ would descend to a nontrivial relation in the associated graded between $F_i, F_j$ of degree below $\deg S_{ij}(F_i, F_j)$, 
a contradiction. So as a polynomial in $B_i, B_j$ we have $C_{ij,0} =C_{ij,1}$, and hence, $\mathcal S_{ij,0} =\mathcal S_{ij,1}$.
\end{proof}

Recall a quasi-split $\imath${}quantum group $\Ui$ is split if $\tau=\id$. The Serre presentation for split $\Ui$ takes an especially simple form, which we record here.
\begin{thm}
  \label{thm:split}
Fix $\ov{p}_i \in \Z_2$, for each $i\in I$. Then the split $\imath${}quantum group $\Ui$ has a Serre presentation with generators $B_i$ $(i\in I)$
and relations
\begin{align*}
\sum_{n=0}^{1-a_{ij}} (-1)^n B_{i,\overline{a_{ij}}+\overline{p_i}}^{(n)}B_j B_{i, \ov{p}_i}^{(1-a_{ij}-n)}=0.
\end{align*}
Moreover, $\Ui$ admits a $\K(q)$-algebra anti-involution $\sigma$ which sends $B_i\mapsto B_i$ for all $i$.
\end{thm}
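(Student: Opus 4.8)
The plan is to deduce Theorem~\ref{thm:split} from the general Serre presentation of Theorem~\ref{thm:Serre} by specializing $\tau=\id$, and then to construct the anti-involution $\sigma$ directly on the presentation. For the presentation part, first I would observe that when $\tau=\id$ we have $\tau i=i$ for every $i\in I$, so every pair $i\neq j$ falls under the case $\tau i=i$ governing relation~\eqref{relation2}. The relations \eqref{relation5} (which require $\tau i\neq i$) and the $a_{ij}=0,\ \tau i\neq j$ commutation relation never occur. Moreover, as noted after Definition~\ref{def:Ui}, a split $\Ui$ is generated by the $B_i$ alone; concretely, $Y^\imath=\{h\in Y\mid \tau(h)=-h\}=\{h\mid h=-h\}=0$ when $\tau=\id$, so the only idempotent-type generator is $K_0=1$ and the Cartan relations \eqref{relation1} and $K_\mu B_i=q_i^{-\langle\mu,\alpha_i\rangle}B_iK_\mu$ become vacuous. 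Thus the presentation of Theorem~\ref{thm:Serre} collapses to exactly the stated list of $\imath$Serre relations in the $B_i$, with no $K_\mu$ generators and no $K_\mu$ relations, which is the first assertion.

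For the anti-involution $\sigma$, the plan is to define it on generators by $\sigma(B_i)=B_i$ and verify it preserves each defining relation of the now-simplified presentation. Since $\Ui$ is presented purely by the relations
\[
\sum_{n=0}^{1-a_{ij}}(-1)^n B_{i,\overline{a_{ij}}+\overline{p}_i}^{(n)}B_j B_{i,\overline{p}_i}^{(1-a_{ij}-n)}=0,
\]
it suffices to check that applying the order-reversing anti-homomorphism determined by $B_i\mapsto B_i$ sends this relation to another relation holding in $\Ui$. Applying $\sigma$ reverses the order of the monomials, turning the left-hand side into
\[
\sum_{n=0}^{1-a_{ij}}(-1)^n B_{i,\overline{p}_i}^{(1-a_{ij}-n)}B_j B_{i,\overline{a_{ij}}+\overline{p}_i}^{(n)},
\]
where I use that each $\imath$divided power $B_{i,\ov p}^{(m)}$ is a polynomial in the single variable $B_i$ (see \eqref{eq:iDPodd}--\eqref{eq:iDPev}) and is therefore fixed by $\sigma$. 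Re-indexing the summation via $n\mapsto 1-a_{ij}-n$ and using $\overline{1-a_{ij}-n}$-type parity bookkeeping, this expression is exactly the $\imath$Serre relation of the same form with the two parities swapped, i.e.\ with $\ov p_i$ replaced by $\ov{a_{ij}}+\ov p_i$. The crucial input is Corollary~\ref{cor:S01}, which asserts precisely that the two parity choices of the $\imath$Serre relation coincide as polynomials in $B_i,B_j$; hence the image under $\sigma$ is again a defining relation, and $\sigma$ descends to a well-defined anti-involution of $\Ui$.

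I expect the main obstacle to be the bookkeeping in the re-indexing step: one must confirm that after substituting $n\mapsto 1-a_{ij}-n$ the sign $(-1)^n$, the binomial-type factors hidden inside the $\imath$divided powers, and the two parity labels all reorganize exactly into the shape covered by Corollary~\ref{cor:S01}, rather than into some a priori different relation. In particular, care is needed to track that the parity attached to the left exponent and that attached to the right exponent genuinely interchange under reversal, so that the swapped-parity relation produced is the one guaranteed equal to the original by \eqref{eq:0=1}. Once that identification is secured, well-definedness of $\sigma$ is automatic since it preserves the full set of defining relations, and $\sigma^2=\id$ is immediate on generators and hence on all of $\Ui$.
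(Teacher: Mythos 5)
Your proposal is correct and follows essentially the same route as the paper: the paper's proof of Theorem~\ref{thm:split} is the one-line observation that $Y^\imath$ is trivial and $\tau i=i$ for all $i$, so Theorem~\ref{thm:Serre} collapses to the stated presentation, and your elaboration of the anti-involution $\sigma$ via order reversal, the re-indexing $n\mapsto 1-a_{ij}-n$, and Corollary~\ref{cor:S01} is exactly the intended (implicit) argument. One small slip: when $\tau=\id$ the condition ``$a_{ij}=0$ and $\tau i\neq j$'' reads ``$a_{ij}=0$ and $i\neq j$'', so that commutation relation \emph{does} occur; it disappears from the final list not because its case is empty but because it is the $a_{ij}=0$ instance of \eqref{relation2} (using $B_{i,\ov p}^{(1)}=B_i$ and $B_{i,\ov p}^{(0)}=1$), so the conclusion stands.
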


\begin{proof}
Follows from Theorem \ref{thm:Serre} by noting that $Y^\bi=\emptyset$ and $\tau i=i$ for all $i\in I$.
\end{proof}

\subsection{Change of parameters}

By \cite[Theorem 7.1]{Ko14} (also cf. Remark~\ref{rem:kappa}),
a presentation of the $\K(q)$-algebra $\Ui_{\vs,\kappa}$ is independent of the parameters $\kappa_i$. It is also well known that the $\K(q)$-algebra $\Ui_{\vs,\kappa}$ (up to some field extension) is isomorphic to $\Ui_{\vs^\diamond,{\bf0}}$ for some distinguished parameters $\vs^\diamond$, i.e., $\vs^\diamond=q_i^{-1}$ for all $i\in I$ such that $\tau i=i$ (cf. \cite{Le02}, \cite[Proposition~ 9.2]{Ko14}). Let us formulate this precisely for later use.

For given parameters $\vs$ satisfy \eqref{vs=}, let $\vs^\diamond$ be the associated distinguished parameters such that $\vs_i^\diamond=\vs_i$ if $\tau i\neq i$, and
\begin{equation}
\label{eq:par0}
\vs^\diamond_i=q_i^{-1}, \text{ if } \tau i=i.
\end{equation}
Let $\Ui_{\vs^\diamond,{\bf0}}$ be the $\imath${}quantum group with the parameters $\vs^\diamond$ and $\kappa_i=0$ for all $i\in I$.
Let ${\F}= \K(q)(a_i\mid i\in I\text{ such that } \tau i=i)$ be a field extension of $\K(q)$, where
\begin{align}
a_i=\sqrt{q_i\vs_i}, \qquad \forall i\in I \text{ such that }\tau i=i.
\end{align}
Denote by $_{\F}\Ui_{\vs,\kappa} =\F \otimes_{\K(q)} \Ui_{\vs,\kappa}$ the $\F$-algebra obtained by a base change.

\begin{prop}
  \label{prop:morphism}
There exists an isomorphism of ${\mathbb F}$-algebras
\begin{align*}
\phi_\imath: {}_{\F}\Ui_{\vs^\diamond,{\bf0}} & \longrightarrow {}_{\F}\Ui_{\vs,\kappa},
\\
B_i \mapsto \left\{\begin{array}{ll} B_i, & \text{ if }\tau i\neq i, \\  a_i^{-1} B_i, & \text{ if } \tau i =i;\end{array} \right.
\qquad K_\mu & \mapsto K_\mu, \quad (\forall i\in I, \mu\in Y^\bi),
\end{align*}
\end{prop}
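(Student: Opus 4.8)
The plan is to factor $\phi_\imath$ as a composite of two isomorphisms
\[
{}_{\F}\Ui_{\vs^\diamond,{\bf0}} \xrightarrow{\ \Phi\ } {}_{\F}\Ui_{\vs,{\bf0}} \longrightarrow {}_{\F}\Ui_{\vs,\kappa},
\]
where the first map is a rescaling isomorphism induced by a \emph{diagonal} automorphism $\Phi$ of ${}_{\F}\U$, and the second is the parameter isomorphism supplied by the $\kappa$-independence of the presentation (Remark~\ref{rem:kappa} and \cite[Theorem 7.1]{Ko14}). Passing to the field extension $\F$ is essential here, since $a_i=\sqrt{q_i\vs_i}$ need not lie in $\K(q)$.

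First I would construct $\Phi$ as the $\F$-algebra map ${}_{\F}\U\to{}_{\F}\U$ fixing every $K_h$ and acting by
\[
\Phi(E_i)=t_iE_i,\qquad \Phi(F_i)=s_iF_i,\qquad (s_i,t_i)=\begin{cases}(a_i^{-1},a_i), & \tau i=i,\\[2pt] (1,1), & \tau i\neq i.\end{cases}
\]
Diagonal rescalings automatically preserve the relations \eqref{Q1}--\eqref{Q4} involving only $K_h$ and the homogeneous $q$-Serre relations \eqref{q serre}; the sole constraint is imposed by the commutator relation \eqref{Q4}, which forces $s_it_i=1$. This holds in both cases because $a_i^2=q_i\vs_i$, so $\Phi$ is a well-defined automorphism (its inverse rescales by $s_i^{-1},t_i^{-1}$).

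Next I would check $\Phi$ acts correctly on the generators of $\Ui$. For $\tau i\neq i$ we have $\vs_i^\diamond=\vs_i$ and $\kappa_i=0$ by \eqref{kappa}, so the generator $B_i$ of $\Ui_{\vs^\diamond,{\bf0}}$ and of $\Ui_{\vs,{\bf0}}$ is the \emph{same} element $F_i+\vs_iE_{\tau i}\tK_i^{-1}$ of $\U$, which $\Phi$ fixes. For $\tau i=i$, using $\vs_i^\diamond=q_i^{-1}$ and $q_i^{-1}a_i^2=\vs_i$,
\[
\Phi\big(F_i+q_i^{-1}E_i\tK_i^{-1}\big)=a_i^{-1}F_i+q_i^{-1}a_iE_i\tK_i^{-1}=a_i^{-1}\big(F_i+\vs_iE_i\tK_i^{-1}\big)=a_i^{-1}B_i,
\]
where the last $B_i$ is the generator of $\Ui_{\vs,{\bf0}}$. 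Since $\Phi$ also fixes every $K_\mu$, it sends the generators of $\Ui_{\vs^\diamond,{\bf0}}$ to nonzero scalar multiples of the generators of $\Ui_{\vs,{\bf0}}$; hence $\Phi\big(\Ui_{\vs^\diamond,{\bf0}}\big)=\Ui_{\vs,{\bf0}}$, and being injective on ${}_{\F}\U$ it restricts to the asserted isomorphism $B_i\mapsto a_i^{-1}B_i$ (resp.\ $B_i\mapsto B_i$), $K_\mu\mapsto K_\mu$.

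Finally, composing with the isomorphism $\Ui_{\vs,{\bf0}}\xrightarrow{\sim}\Ui_{\vs,\kappa}$ matching $B_i\mapsto B_i$, $K_\mu\mapsto K_\mu$ yields exactly $\phi_\imath$, and reversing both steps produces its inverse, so $\phi_\imath$ is an isomorphism. The only genuinely non-elementary input is this last step: a diagonal automorphism of $\U$ can never create the constant term $\kappa_i\tK_i^{-1}$, so the passage from ${\bf0}$ to $\kappa$ must be handled separately, and this is where I would invoke Kolb's presentation via Remark~\ref{rem:kappa}. The rescaling part, by contrast, is forced and routine once the scalars are pinned down by $s_it_i=1$.
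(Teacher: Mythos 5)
Your proposal is correct and follows essentially the same route as the paper: the paper likewise defines a diagonal rescaling automorphism $\phi_{\texttt u}$ of ${}_{\F}\U$ (with $E_i\mapsto a_iE_i$, $F_i\mapsto a_i^{-1}F_i$ for $\tau i=i$, identity otherwise), checks that it restricts to an isomorphism ${}_{\F}\Ui_{\vs^\diamond,{\bf0}}\to{}_{\F}\Ui_{\vs,{\bf0}}$, and then composes with the isomorphism ${}_{\F}\Ui_{\vs,{\bf0}}\to{}_{\F}\Ui_{\vs,\kappa}$ from \cite[Theorem 7.1]{Ko14}. Your verification of the scalars (via $a_i^2=q_i\vs_i$ and $s_it_i=1$) and your observation that $\kappa_i=0$ when $\tau i\neq i$ fill in the details the paper leaves as ``a direct computation.''
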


\begin{proof}
Note that $B_i$ in $\Ui_{\vs^\diamond,\bf0}$ and $\Ui_{\vs,\kappa}$ have different expressions under their respective embeddings into $\U$.

We consider the following (rescaling) automorphism of the $\F$-algebra $_{\F}\U :=\F\otimes_{\K(q)} \U$ such that
\begin{align}
  \label{eqn: definition of phi}
  \phi_{\texttt u} : {}_{\F}\bU & \longrightarrow {}_{\F}\bU,
  \\
E_i  \mapsto \left\{\begin{array}{ll} E_i, & \text{ if }\tau i\neq i, \\  a_i E_i, & \text{ if } \tau i =i, \end{array} \right.
 \quad & F_i  \mapsto \left\{\begin{array}{ll} F_i, & \text{ if }\tau i\neq i, \\  a_i^{-1} F_i, & \text{ if }\tau i =i, \end{array} \right.
\quad
K_\mu\mapsto K_\mu \quad (\forall  i\in I, \mu \in Y).
\notag
\end{align}

A direct computation shows that the automorphism $\phi_{\texttt u}$ on $_{\F}\U$ restricts to an ${\mathbb F}$-algebra isomorphism
\begin{align*}
\phi_\imath: {}_{\F}\Ui_{\vs^\diamond,{\bf0}} & \longrightarrow {}_{\F}\Ui_{\vs,\bf0},
\\
B_i \mapsto \left\{\begin{array}{ll} B_i, & \text{ if }\tau i\neq i, \\  a_i^{-1} B_i, & \text{ if } \tau i =i;\end{array} \right.
\qquad K_\mu & \mapsto K_\mu, \quad (\forall i\in I, \mu\in Y^\bi).
\end{align*}

By \cite[Theorem 7.1]{Ko14}, there is an ${\mathbb F}$-algebra isomorphism
${}_{\F}\Ui_{\vs,{\bf0}} \stackrel{\cong}{\longrightarrow} {}_{\F}\Ui_{\vs,\kappa}$ which matches the corresponding generators
$B_i, K_\mu$.

The isomorphism in the proposition follows by composing these two isomorphisms.
\end{proof}

\subsection{Reduction to a $q$-binomial identity}

For
\begin{equation}
  \label{eq:wul}
  w\in\Z, \quad u,\ell\in\Z_{\geq0}, \text{ with } u,\ell \text{ not both } 0,
\end{equation}
we define
\begin{align}\label{eq:Twux}
T& (w,u,\ell)  \\
&= \sum_{\substack{c,e,r\geq0 \\ c+e+r=u}}
\sum^{\ell}_{\substack{t=0 \\ 2\mid(t+w-r) }}
\notag\\
&\quad q^{-t(\ell+u-1)+(\ell+u)(c-e)}
\qbinom{\ell}{t}_{q}\qbinom{w+t-\ell}{r}_{q} \qbinom{u-1+\frac{w+t-r}{2}}{c}_{q^2}\qbinom{\frac{w+t-r}{2}-\ell}{e}_{q^2}
  \notag \\
&-\sum_{\substack{c,e,r\geq0 \\ c+e+r=u}}
\sum^{\ell}_{\substack{t=0 \\ 2\nmid(t+w-r) }} \notag \\ \notag
&\quad q^{-t(\ell+u-1)+(\ell+u-1)(c-e)}
 \qbinom{\ell}{t}_{q}\qbinom{w+t-\ell}{r}_{q} \qbinom{u+\frac{w+t-r-1}{2}}{c}_{q^2}\qbinom{\frac{w+t-r-1}{2}-\ell}{e}_{q^2}.
\end{align}

\begin{prop}\label{prop:Serre=T}
If $T(w,u,\ell)=0$ for all integers $w, u, \ell$ as in \eqref{eq:wul}, then the $\imath${}Serre relations \eqref{relation2} hold in the $\imath${}quantum group $\Ui_{\vs^\diamond,{\bf0}}$.
\end{prop}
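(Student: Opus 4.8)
The plan is to verify the $\imath$Serre relations \eqref{relation2} directly inside the modified quantum group $\Udot$. With the distinguished parameters of \eqref{eq:par0} and $\kappa=\mathbf 0$ we have $\tau i=i$ and $B_i=F_i+q_i^{-1}E_i\tK_i^{-1}$, and each $\imath$divided power $B_{i,\ov{p}}^{(m)}$ is the explicit polynomial in $B_i$ given by \eqref{eq:iDPodd}--\eqref{eq:iDPev} with $\vs_i=q_i^{-1}$. Writing $X$ for the left-hand side of \eqref{relation2}, an element of $\Ui\subset\U$, Remark~\ref{rem:u=0} tells us it suffices to prove $X\onestar_k=0$ for all idempotents $\onestar_k$ of a single fixed parity, and I would choose $\bar k=\ov{p}_i$. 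This choice is the natural one because the two parity labels in \eqref{relation2} then match the idempotents they meet: in a summand $B_{i,\ov{a_{ij}}+\ov{p}_i}^{(n)}B_jB_{i,\ov{p}_i}^{(1-a_{ij}-n)}\onestar_k$, read from the right, the factor $B_{i,\ov{p}_i}^{(1-a_{ij}-n)}$ acts on $\onestar_k$ of parity $\ov{p}_i$; since $a_{i,\tau j}=a_{ij}$ when $\tau i=i$, the generator $B_j$ shifts the idempotent to parity $\ov{p}_i+\ov{a_{ij}}$; and the left factor $B_{i,\ov{a_{ij}}+\ov{p}_i}^{(n)}$ meets an idempotent of exactly that parity. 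Hence both $\imath$divided powers act on idempotents of their adapted parity, so each admits the clean PBW-type expansion of \cite{BeW18}.

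First I would substitute these expansions, writing every $B_{i,\bar{\cdot}}^{(m)}\onestar$ as an $\A$-linear combination of monomials $E_i^{(a)}F_i^{(b)}\onestar$. Since $X$ is homogeneous of degree one in $B_j$ and $B_j=F_j+\vs_j E_{\tau j}\tK_j^{-1}$, the element $X\onestar_k$ splits into an $F_j$-summand and an $E_{\tau j}$-summand; these lie in distinct weight components of $\U$ (carrying the roots $-\alpha_j$ and $+\alpha_{\tau j}$ respectively) and therefore vanish separately. In the $F_j$-summand I would use $[E_i,F_j]=0$ to slide $F_j$ to the right of every $E_i$, and then the rank-one straightening identities \eqref{eqn:commutate-idempotent3}--\eqref{eqn:commutate-idempotent4} to reorder the resulting $F_i^{(b)}E_i^{(a')}$ factors. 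Crucially, no $q$-Serre relation between the $i$- and $j$-directions is needed for this: the outcome is a finite $\A$-combination of monomials $E_i^{(A)}F_i^{(B)}F_j F_i^{(C)}\onestar_k$ (the $E_{\tau j}$-summand is handled symmetrically via $[F_i,E_{\tau j}]=0$, valid since $\tau j\neq i$). It then suffices to show that every coefficient of such a monomial vanishes, with no appeal to linear independence.

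The decisive step is to identify these coefficients. After collecting, the coefficient of the monomial indexed by $(A,B,C)$ is a nested sum over the expansion data of the two $\imath$divided powers, and a sequence of $q$-binomial simplifications — several of which I would isolate in Appendix~\ref{sec:appendix} — brings it into the form of a nonzero power of $q$ times $T(w,u,\ell)$ of \eqref{eq:Twux}, where $w,u,\ell$ are explicit integer functions of $A,B,C,k$ and $a_{ij}$. Here the two blocks of $T$, selected by the parity of $t+w-r$, arise precisely from the odd versus even cases of the formulas \eqref{eq:iDPodd}--\eqref{eq:iDPev}, and the $q^2$-binomials in $T$ reflect their quadratic dependence on $B_i$. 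Granting the hypothesis $T(w,u,\ell)=0$ for all admissible triples, every coefficient vanishes, so $X\onestar_k=0$ for all $\onestar_k$ of parity $\ov{p}_i$; by Remark~\ref{rem:u=0}, $X=0$, which is \eqref{relation2} in $\Ui_{\vs^\diamond,\mathbf 0}$.

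The main obstacle is exactly this identification. Each coefficient is the product of two independently nested expansions, so collapsing it to the single three-variable function $T(w,u,\ell)$ — matching the summation indices $(c,e,r,t)$ of \eqref{eq:Twux} to the PBW bookkeeping, and verifying that the two parity-selected blocks assemble in the correct way — is the delicate combinatorial heart of the argument and is where essentially all of the effort lies. I would expect the repeated Vandermonde-type and $q$-binomial rearrangements, together with careful tracking of the even/odd parities, to be the most error-prone part; this is why I would quarantine the auxiliary identities in the appendix, keeping the main reduction readable.
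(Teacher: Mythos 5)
Your overall strategy --- expand the $\imath$divided powers via the formulas of \cite{BeW18} on idempotents of the adapted parity, reduce to monomials $E_i^{(A)}F_i^{(B)}F_jF_i^{(C)}\onestar_k$, and identify the resulting coefficients with $q$-powers times $T(w,u,\ell)$ --- is exactly the route taken in Section~\ref{sec:Serre=T} and Appendix~\ref{sec:appendix}, and your parity bookkeeping for the idempotents is correct. But there is a genuine gap at the final step, namely the claim that ``every coefficient vanishes, so $X\onestar_k=0$, with no appeal to linear independence.'' The hypothesis only supplies $T(w,u,\ell)=0$ for $u,\ell$ \emph{not both zero}, as in \eqref{eq:wul}, and that exclusion is not cosmetic: the monomials carrying no $E_i$ factor, i.e.\ $F_i^{(n)}F_jF_i^{(1-a_{ij}-n)}\onestar_k$ (which correspond to $u=\ell=0$ after the change of variables), occur with coefficients $(-1)^n$, which certainly do not vanish --- and indeed $T(w,0,0)=\pm1$. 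These residual terms cancel only because they assemble into the ordinary $q$-Serre relation \eqref{eq:serre4}, which is precisely an appeal to a linear relation among your monomials, contradicting your stated plan. As written, your argument leaves this nonzero block unaccounted for; the repair is the paper's observation immediately preceding \eqref{eq:BFB=S}, which separates the $u=\ell=0$ block and kills it by \eqref{eq:serre4} before invoking $T=0$ on the rest.

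A secondary, non-fatal divergence: for the $E_{\tau j}$-summand you propose a second ``symmetric'' direct computation (your weight-space splitting of $X$ into the $F_j$- and $E_{\tau j}$-parts is correct, and $[F_i,E_{\tau j}]=0$ does hold since $i=\tau i\neq \tau j$). The paper instead proves in Lemma~\ref{lem: E equivalent F} that the $E$-identity \eqref{eqn:general E} is \emph{equivalent} to the $F$-identity \eqref{eqn:general F}, by applying the involution $\tau\circ\varpi$ of Lemma~\ref{lemma:involution of U}, which fixes the $\imath$divided powers. This halves the computation and, more importantly, spares you from having to verify separately that the $E$-side coefficients also collapse to the same identity $T(w,u,\ell)=0$; in your plan that verification is an additional, unexamined piece of work.
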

The proof of Proposition \ref{prop:Serre=T} will be given in Section \ref{sec:Serre=T} and Appendix~\ref{sec:appendix}.

\subsection{A $q$-binomial identity}

The following is another main result of this paper, which will be generalized and proved in Section~ \ref{sec:T=0}.
\begin{thm}\label{thm:T=0}
The identity $T(w,u,\ell)=0$ holds, for all integers $w, u, \ell$ as in \eqref{eq:wul}.
\end{thm}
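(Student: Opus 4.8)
The plan is to follow the two-stage route indicated in the introduction: rather than attacking $T(w,u,\ell)=0$ head-on, I would embed $T$ into the more flexible family $G(w,u,\ell;p_0,p_1,p_2)$ of \eqref{eq:Twxyu}, obtained by promoting the rigid exponent-coefficients and argument-shifts of \eqref{eq:Twux} (the combinations such as $\ell+u$, $\ell+u-1$, $u-1$ and $-\ell$ sitting inside the $q$-powers and the upper indices of the $q^2$-binomials) to three independent integer parameters $p_0,p_1,p_2$. The point of the generalization is that the particular linear relations among these coefficients are exactly what obstruct a clean telescoping; once they are freed, one gains enough room to write down Pascal-type recursions that actually close. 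I would then prove $G(w,u,\ell;p_0,p_1,p_2)=0$ for all $\ell>0$ throughout the enlarged parameter range, and recover Theorem~\ref{thm:T=0} by specializing $(p_0,p_1,p_2)$ to the values dictated by \eqref{eq:Twux}, handling the boundary case $\ell=0$ (where necessarily $u>0$) separately by a direct evaluation of the surviving $t=0$ term.

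The engine of the argument is a recursion in $\ell$, which becomes available only after the coefficients have been freed. I would apply the $q$-Pascal rule
\[
\qbinom{\ell}{t}_q = q^{t}\,\qbinom{\ell-1}{t}_q + q^{-(\ell-t)}\,\qbinom{\ell-1}{t-1}_q
\]
to the factor $\qbinom{\ell}{t}_q$ in each of the two sums defining $G$. Re-indexing the $(t-1)$-contribution as a fresh summation variable, and invoking the companion recursions for $\qbinom{w+t-\ell}{r}_q$ and for the two $q^2$-binomials whose upper arguments shift by one, the goal is to express $G(w,u,\ell;p_0,p_1,p_2)$ as a $q$-linear combination of instances of $G$ at $\ell-1$ with shifted $w$ and shifted auxiliary parameters. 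Here the extra variables $p_0,p_1,p_2$ are designed to absorb precisely the $q$-power discrepancies created by the Pascal splittings, so that the right-hand side is again a genuine member of the $G$-family rather than an off-family expression; this is what makes the recursion close.

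The delicate point, and the step I expect to be the main obstacle, is the parity bookkeeping. The two sums in \eqref{eq:Twux} (and hence in $G$) are governed by the complementary congruences $2\mid(t+w-r)$ and $2\nmid(t+w-r)$, and each Pascal step on $\qbinom{\ell}{t}_q$ shifts $t$ by one, thereby interchanging the two parity classes and moving terms between the first and second sums. The recursion therefore cannot be run on either half in isolation: the even and odd pieces must be tracked together, and the two halves must recombine after the shift. Verifying that the half-integer quantities $\tfrac{w+t-r}{2}$ and $\tfrac{w+t-r-1}{2}$ transform consistently under $t\mapsto t\pm1$, so that the $q^2$-binomials reassemble with the correct integer upper indices, is precisely where the admissible choice of $p_0,p_1,p_2$ must be pinned down.

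Finally, with the reduction $G(\cdots,\ell;\cdots)\rightsquigarrow G(\cdots,\ell-1;\cdots)$ established, I would close the induction at the small-$\ell$ base case together with the degenerate cases of small $u$; in each of these the $t$-sum (or the $c,e$-sum) collapses to a finite alternating $q$- or $q^2$-binomial sum that vanishes by the $q$-binomial theorem, i.e.\ by a direct application of $q$-Vandermonde. The vanishing of $G$ then propagates to all $\ell>0$, and the specialization of $(p_0,p_1,p_2)$ together with the separate $\ell=0$ computation yields $T(w,u,\ell)=0$ for all $w,u,\ell$ as in \eqref{eq:wul}, completing the proof of Theorem~\ref{thm:T=0}.
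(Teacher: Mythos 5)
Your overall architecture coincides with the paper's: both pass from $T$ to the three-parameter family $G$ of \eqref{eq:Twxyu}, derive Pascal-type recursions from \eqref{eq:ident-} and \eqref{eq:ident+}, and use the recursion in $\ell$ (the paper's \eqref{eq:Gx+1w}) to push the problem down to small $\ell$. The genuine gap is in how you propose to close the induction. The $\ell$-recursion reads $G(w,u,\ell+1;\cdot)=q^{u}G(w,u,\ell;\cdot)-q^{u-2\ell}G(w+1,u,\ell;\cdot)$, so the base case $\ell=1$ is the difference $q^{u}\bigl(G(w,u,0;\cdot)-G(w+1,u,0;\cdot)\bigr)$ of two $\ell=0$ values at consecutive $w$. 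The $\ell=0$ specialization does \emph{not} vanish: after normalizing $p_0$ to $0$ via \eqref{eq:Gk}--\eqref{eq:Godd} it equals the generally nonzero sum $H(u;p_1,p_2)$ of \eqref{Hup1p2}. So the base case is not ``a finite alternating binomial sum that vanishes by $q$-Vandermonde''; what is actually needed is that $G(w,u,0;0,p_1,p_2)$ is \emph{independent of $w$} (Proposition~\ref{prop:G=H}), so that the $\ell=1$ value telescopes to zero. That independence is itself a nontrivial induction on $w$, which closes only because the $w$-recursion \eqref{eq:Gw+1} for $G$ is matched term-by-term by the identity \eqref{eq:Hp2p1+1} for the closed form $H$ (note the swap of $p_1,p_2$ and the shift $p_1\mapsto p_1+1$ occurring in both), followed by a rational-function argument to extend from $w\ge 0$ to all $w\in\Z$. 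Your proposal never identifies this $w$-independence, and it is the actual crux.

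The same issue affects your treatment of $\ell=0$ for $T$ itself. At $\ell=0$ only $t=0$ survives, but the remaining sum over $c+e+r=u$, split by the parity of $w-r$, is exactly $G(w,u,0;0,u-1,0)$ up to a prefactor by \eqref{eq:T=G}; its vanishing again uses the $w$-independence to reduce to $H(u;u-1,0)$, which is proportional to $\qbinom{u-1}{u}_{q^2}=0$. It is not a direct evaluation of a surviving term. In short, the skeleton of your argument is the paper's, but the specific mechanism you offer for the base cases (direct $q$-Vandermonde vanishing at small $\ell$ and small $u$) would fail as stated, and without the telescoping/$w$-independence lemma both the base case of the $\ell$-induction and the $\ell=0$ case of the theorem remain unproved.
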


\subsection{Proof of Theorem \ref{thm:Serre}}
  \label{subsec:proofSerre}

We assume the validity of Proposition \ref{prop:Serre=T} and Theorem \ref{thm:T=0}.

First, we consider the $\imath${}quantum group with distinguished parameters $\vs^\diamond$, $\Ui_{\vs^\diamond,{\bf0}}$. By the earlier works \cite{Le02, Ko14, BK15} as explained in Remark~\ref{rem:kappa},  it remains to prove the $\imath${}Serre relations \eqref{relation2} (with distinguished parameters $\vs^\diamond$). Indeed, the $\imath${}Serre relations  \eqref{relation2} follow by combining Proposition~\ref{prop:Serre=T} and Theorem \ref{thm:T=0}.

By a direct computation, the $\imath${}Serre relations for $\Ui_{\vs^\diamond,{\bf0}}$ is transformed into the $\imath${}Serre relations \eqref{relation2} for $\Ui_{\vs,\kappa}$ with general parameters by the isomorphism $\phi$ in Proposition~\ref{prop:morphism}. Applying Remark~\ref{rem:kappa} again, we have completed the proof of Theorem \ref{thm:Serre}.

\qed

\subsection{Bar involution on $\Ui$}
  \label{subsec:bar}





\begin{prop}
   \label{prop:bar}
 Assume the parameters $\vs_i$, for $i\in I$, satisfy the conditions (a)-(c):
\begin{enumerate}
\item[(a)] $\ov{\vs_iq_i} =\vs_iq_i$, if $\tau i=i$ and $a_{ij}\neq 0$ for some $j\in I\setminus\{i\}$;
\item[(b)] $\ov{\vs_i} =\vs_i =\vs_{\tau i}$, if $\tau i\neq i$ and $a_{i,\tau i}=0$;
\item[(c)] $\vs_{\tau i}=q_i^{-a_{i,\tau i}}\ov{\vs_i}$, if $\tau i\neq i$ and $a_{i,\tau i}\neq0$.
\end{enumerate}
Then there exists a $\K$-algebra automorphism $ ^{\ov{\,\,\,\,\,}}: \Ui\rightarrow \Ui$ (called a bar involution) such that
\[
\ov{q}=q^{-1}, \quad
\ov{K_\mu}=K_{\mu}^{-1}, \quad
\ov{B_i}=B_i,  \quad
\forall \mu\in Y^\bi, i\in I.
\]
\end{prop}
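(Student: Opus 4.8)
The plan is to exploit the Serre presentation of $\Ui$ established in Theorem~\ref{thm:Serre}. Since $\Ui$ is presented by the generators $B_i$ $(i\in I)$, $K_\mu$ $(\mu\in Y^\imath)$ and the relations \eqref{relation1}--\eqref{relation2}, it suffices to define a $\K$-algebra homomorphism from the free algebra on these generators into $\Ui$, which is semilinear over the field automorphism $q\mapsto q^{-1}$ of $\K(q)$ and sends $K_\mu\mapsto K_\mu^{-1}=K_{-\mu}$ and $B_i\mapsto B_i$, and then to check that the image of each defining relation again holds in $\Ui$. The universal property of the presentation then yields a $\K$-algebra endomorphism $^{\ov{\,\,\,}}$, and since the assignment is visibly involutive on generators ($\ov{\ov{q}}=q$, $\ov{\ov{K_\mu}}=K_\mu$, $\ov{\ov{B_i}}=B_i$) it is an involution, hence an automorphism. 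Thus everything reduces to verifying, relation by relation, that $^{\ov{\,\,\,}}$ preserves \eqref{relation1}--\eqref{relation2}, and to pinning down exactly which of the hypotheses (a)--(c) each relation needs.

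The relations that cost nothing are handled first. For \eqref{relation1} one uses $\ov{K_\mu}=K_{-\mu}$; for $K_\mu B_i=q_i^{-\langle\mu,\alpha_i\rangle}B_iK_\mu$ one notes that bar sends $q_i^{-\langle\mu,\alpha_i\rangle}$ to $q_i^{\langle\mu,\alpha_i\rangle}$ and $K_\mu$ to $K_{-\mu}$, recovering the same relation with $\mu$ replaced by $-\mu\in Y^\imath$; the commutativity $B_iB_j=B_jB_i$ is bar-invariant outright; and the $\imath$Serre relation for $j\neq\tau i\neq i$ is preserved because $B_i^{(n)}=B_i^n/[n]_{q_i}^!$ and $[n]_{q_i}^!$ are bar-invariant. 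For the $\tau i=i$ relation \eqref{relation2} the only new input is that the $\imath$divided powers $B_{i,\ov{p}}^{(m)}$ be bar-invariant; inspecting \eqref{eq:iDPodd}--\eqref{eq:iDPev}, the factors $B_i$, the integers $[\,\cdot\,]_{q_i}^2$ and $[m]_{q_i}^!$ are all bar-invariant, while the scalar $q_i\vs_i$ occurs in every nontrivial factor, so invariance holds precisely when $\ov{q_i\vs_i}=q_i\vs_i$, which is hypothesis (a). (When $a_{ij}=0$ for all $j$, \eqref{relation2} degenerates to $B_iB_j-B_jB_i=0$ and no condition on $\vs_i$ is needed, matching the proviso in (a).)

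The crux, and the step I expect to be the main obstacle, is the $\tau i\neq i$ relation \eqref{relation5}, whose right-hand side carries the $q$-shifted factorials $(q_i^{\pm2};q_i^{\pm2})_{-a_{i,\tau i}}$, the parameters $\vs_i,\vs_{\tau i}$, and the group-like factors $\tK_i\tK_{\tau i}^{-1}$ and $\tK_{\tau i}\tK_i^{-1}$, which are inverse to one another. Its left-hand side is manifestly bar-invariant. Writing $c=-a_{i,\tau i}\ge 0$, I would track the right-hand side termwise: $\tfrac{1}{q_i-q_i^{-1}}\mapsto-\tfrac{1}{q_i-q_i^{-1}}$; the factorials interchange, $(q_i^{-2};q_i^{-2})_c\leftrightarrow(q_i^2;q_i^2)_c$, directly from the definition $(a;x)_n=\prod_{k=0}^{n-1}(1-ax^k)$; the divided power $B_i^{(c)}$ is bar-invariant; and, since the $\tK$'s commute, $\tK_i\tK_{\tau i}^{-1}\leftrightarrow\tK_{\tau i}\tK_i^{-1}$. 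Feeding in hypothesis (c) in the equivalent forms $\ov{\vs_i}=q_i^{-c}\vs_{\tau i}$ and $\ov{\vs_{\tau i}}=q_i^{-c}\vs_i$, the powers $q_i^{\pm c}$ cancel against the prefactor $q_i^{a_{i,\tau i}}=q_i^{-c}$, and after the overall sign flip the two terms swap roles, so that the bar of the right-hand side reproduces the right-hand side exactly. The degenerate case $c=0$ collapses \eqref{relation5} to $B_{\tau i}B_i-B_iB_{\tau i}=\tfrac{1}{q_i-q_i^{-1}}\bigl(\vs_{\tau i}\tK_i\tK_{\tau i}^{-1}-\vs_i\tK_{\tau i}\tK_i^{-1}\bigr)$, whose invariance follows from $\ov{\vs_i}=\vs_i=\vs_{\tau i}$ together with the inversion of $\tK_i\tK_{\tau i}^{-1}$ under bar; this is precisely hypothesis (b).

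Having checked that every defining relation is preserved under the stated hypotheses (a)--(c), the assignment descends to a well-defined $\K$-algebra endomorphism of $\Ui$, and by the involutivity noted above it is an automorphism squaring to the identity, which is the asserted bar involution. The only substantive bookkeeping lies in the factorial-and-scalar balancing for \eqref{relation5}; all remaining verifications are immediate from the bar-invariance of $q$-integers and divided powers.
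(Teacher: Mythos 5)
Your proposal is correct and follows exactly the paper's own route: the paper's proof simply observes that under (a)--(c) the $\imath$divided powers are bar-invariant and that all the explicit relations \eqref{relation1}--\eqref{relation2} of the Serre presentation are then bar-invariant "by inspection." You have merely carried out that inspection in detail (your bookkeeping for \eqref{relation5}, including the swap of the two terms and the cancellation of $q_i^{\pm a_{i,\tau i}}$ via hypothesis (c), checks out).
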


\begin{proof}
Under the assumptions, the $\imath${}divided powers $B_i^{(n)}$ in \eqref{eq:iDP1} and $B_{i, {\ov{p}}}^{(n)}$, for $\ov{p} \in \Z_2$, in \eqref{eq:iDPodd}-\eqref{eq:iDPev} are clearly bar invariant. If follows by inspection that all the explicit defining relations for $\Ui$ in \eqref{relation1}-\eqref{relation2} are bar invariant.
\end{proof}

\begin{rem}
One could further check that Conditions (a)-(c) in Proposition~\ref{prop:bar} are necessary for the existence of the bar involution as well.
%
Under the constraint \eqref{eq:aij} on the Cartan matrix $A=(a_{ij})$, Proposition \ref{prop:bar} and the necessity of the conditions on parameters were known earlier in \cite{BK15}.
\end{rem}

\section{Reduction of $\imath${}Serre relations to a $q$-identity}
   \label{sec:Serre=T}

This section is devoted to a proof of Proposition \ref{prop:Serre=T}. As observed above, by the isomorphism $\phi$ in Proposition \ref{prop:morphism}, the $\imath${}Serre relations for $\Ui_{\vs^\diamond,{\bf0}}$ with distinguished parameters $\vs^\diamond$ is transformed into the $\imath${}Serre relations \eqref{relation2} for $\Ui_{\vs,\kappa}$ with general parameters. Hence we can and shall work the $\imath${}quantum groups with distinguished parameters $\vs_i^\diamond$, $\Ui=\Ui_{\vs^\diamond,{\bf0}}$, in this section on reduction of the $\imath${}Serre relations.

\subsection{Reduction by equivalence}
 \label{subsec:equiv}

\begin{lem}
  \label{lem: E equivalent F}
For any $i\in I$ such that $\tau i=i$ and each $\ov{p}\in\Z_2$,
then the following 2 identities in $\U$ are equivalent: for $j \neq i\in I$,
\begin{align}
\sum_{n=0}^{1-a_{ij}} (-1)^n  B_{i,\ov{a_{ij}}+\ov{p}}^{(n)}F_jB_{i,\ov{p}}^{(1-a_{ij}-n)} &=0,
\label{eqn:general F} \\
\sum_{n=0}^{1-a_{ij}} (-1)^n  B_{i, \ov{a_{ij}}+\ov{p}}^{(n)} E_{\tau j}\tK^{-1}_j B_{i,\ov{p} }^{(1-a_{ij}-n)} &=0.
\label{eqn:general E}
\end{align}
\end{lem}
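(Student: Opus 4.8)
The plan is to prove the equivalence by exploiting the involution $\varpi$ on $\U$ constructed in Lemma~\ref{lemma:involution of U}, which sends $E_i \mapsto q_i^{-1}F_i\tK_i$ and $F_i \mapsto q_i^{-1}E_i\tK_i^{-1}$ and $q \mapsto q^{-1}$. The guiding observation is that $\varpi$ should fix (up to an overall invertible scalar) the generator $B_i$ of the split rank-one subalgebra for the node $i$ with $\tau i = i$, and hence should also fix the $\imath$divided powers $B_{i,\ov p}^{(n)}$, while interchanging the two ``off-diagonal'' pieces $F_j$ and $E_{\tau j}\tK_j^{-1}$ that distinguish relation \eqref{eqn:general F} from \eqref{eqn:general E}. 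Since $\varpi$ is a bijective algebra homomorphism, applying it to the left-hand side of \eqref{eqn:general F} should produce a nonzero scalar multiple of the left-hand side of \eqref{eqn:general E}, so that one identity vanishes precisely when the other does.

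First I would compute $\varpi(B_i)$ explicitly for the distinguished parameter $\vs_i^\diamond = q_i^{-1}$ (recall we work in $\Ui_{\vs^\diamond,\mathbf 0}$, so $\kappa_i = 0$ and $B_i = F_i + q_i^{-1}E_i\tK_i^{-1}$ using $\tau i = i$). Applying $\varpi$ term by term gives $\varpi(F_i) = q_i^{-1}E_i\tK_i^{-1}$ and $\varpi(q_i^{-1}E_i\tK_i^{-1}) = q_i\cdot q_i^{-1}F_i\tK_i\cdot \tK_i^{-1} = F_i$ (using $\varpi(q)=q^{-1}$ so $\varpi(q_i^{-1}) = q_i$, and $\varpi(\tK_i^{-1})=\tK_i^{-1}$). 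Hence $\varpi(B_i) = B_i$, so $B_i$ is genuinely $\varpi$-fixed. Because the $\imath$divided powers in \eqref{eq:iDPodd}--\eqref{eq:iDPev} are polynomials in $B_i$ with coefficients that are bar-type-invariant under $q\mapsto q^{-1}$ and $\vs_i = q_i^{-1}$ (the factors $q_i\vs_i[2j-1]_{q_i}^2$ etc.\ are $\varpi$-stable, since $q_i\vs_i = 1$ here), I would verify that $\varpi\big(B_{i,\ov p}^{(n)}\big) = B_{i,\ov p}^{(n)}$ for every $n$ and every parity $\ov p$. The small technical point to check is that $\varpi$ respects the parity conventions, i.e.\ that it does not swap the even and odd versions of the divided powers; this follows from the fact that $\varpi$ fixes $B_i$ itself and acts as a bona fide algebra map, so it preserves each polynomial expression verbatim.

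Next I would compute $\varpi$ on the remaining factor $F_j$. Since $j \neq i$, we have $\varpi(F_j) = q_j^{-1}E_j\tK_j^{-1}$ in general, but the relation \eqref{eqn:general E} involves $E_{\tau j}\tK_j^{-1}$, not $E_j\tK_j^{-1}$. To bridge this I would additionally apply the diagram automorphism $\tau$ from \eqref{eq:tau}, or equivalently observe that the composite $\tau \circ \varpi$ (or $\varpi \circ \tau$) sends $F_j \mapsto$ a scalar times $E_{\tau j}\tK_{\tau j}^{-1}$; one then uses $\tK_{\tau j}^{-1} = \tK_j^{-1}$ on the relevant weight spaces, or absorbs the discrepancy into the commutation relations \eqref{eqn: idempotent Ej Fj} for moving $\tK$ past the $B_i$-powers, noting that $\tau i = i$ guarantees $\tau$ fixes the node $i$ and hence fixes $B_i$ and its $\imath$divided powers. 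Tracking the resulting overall power of $q$ and of $\tK_j^{\pm 1}$ through the whole sum — which is uniform in $n$ because each summand has the same total $B_i$-degree $1-a_{ij}$ — shows that the image of the left-hand side of \eqref{eqn:general F} equals an invertible scalar (times a fixed power of $\tK_j$) multiple of the left-hand side of \eqref{eqn:general E}.

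The main obstacle I anticipate is the bookkeeping in this last step: ensuring that the $\tK$-factor produced by $\varpi(F_j) \sim E_{\tau j}\tK^{-1}_j$ commutes correctly past the $\imath$divided powers $B_{i,\ov p}^{(n)}$ and emerges as a \emph{single} global factor, rather than an $n$-dependent one that would spoil the term-by-term matching. I expect this to work cleanly precisely because $\tK_j$ interacts with $B_i$ only through the grading shift recorded in \eqref{eqn: idempotent Ej Fj}, and the homogeneity of each summand (total $B_i$-degree $1-a_{ij}$, with one intervening off-diagonal factor) forces a uniform shift. A subtlety is that the parity label on the $\imath$divided powers, $\ov{a_{ij}} + \ov p$ on the left and $\ov p$ on the right, must be seen to be compatible with the weight shift induced by $F_j$ versus $E_{\tau j}\tK_j^{-1}$; reconciling these two parities is exactly what makes the equivalence an equivalence (rather than an equality of a single relation with itself), and it is where I would spend the most care.
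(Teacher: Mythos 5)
Your proposal is correct and follows essentially the same route as the paper: apply the composite $\tau\circ\varpi$, which fixes $B_i$ (hence the $\imath$divided powers $B_{i,\ov p}^{(n)}$, since their coefficients are invariant under $q\mapsto q^{-1}$ when $q_i\vs_i=1$) and sends $F_j$ to a scalar multiple of $E_{\tau j}\tK_{\tau j}^{-1}$, then repair the $\tK_{\tau j}$ versus $\tK_j$ discrepancy by right-multiplying by $\tK_{\tau j}\tK_j^{-1}$, which commutes with $B_i$ because $\tau j\cdot i=\tau j\cdot\tau i=j\cdot i$. The bookkeeping you worry about in the last step is exactly this one commutation, so the argument closes as you expect.
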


\begin{proof}
Recall the involution $\varpi$  from Lemma~\ref{lemma:involution of U}  and the involution $\tau$ of $\U$ from \eqref{eq:tau}.
Assume the identity \eqref{eqn:general F} holds.
By definition, we have $\tau\circ \varpi (F_i+q_i^{-1} E_{i} \tK_i^{-1})= B_i$ as $\tau i=i$. It then follows by definition of the $\imath${}divided powers \eqref{eq:iDPodd}--\eqref{eq:iDPev} that $\tau\circ \varpi(B_i^{(n)}|_{\ov{p}} )=B_i^{(n)}|_{\ov{p}}$ for any $n, \ov{p}$. Hence applying $\tau\circ\varpi$ to \eqref{eqn:general F} gives us
\begin{align*}
\sum_{n=0}^{1-a_{ij}} (-1)^n  B^{(n)}_{i,\ov{a_{ij}}+\ov{p}} E_{\tau j}\tK^{-1}_{\tau j}B^{(1-a_{ij}-n)}_{i,\ov{p}}=0.
\end{align*}
Then  \eqref{eqn:general E} follows by multiplying the above identity on the right by $\tK_{\tau j}\tK_j^{-1}$ and noting that $\tK_{\tau j}\tK_j^{-1} B_i=B_i\tK_{\tau j}\tK_j^{-1}$.

Similarly by applying $\tau\circ\varpi$ to the identity \eqref{eqn:general E}, we can show that \eqref{eqn:general E} implies \eqref{eqn:general F}.
\end{proof}

Since we have $B_j=F_j +\vs_j E_{\tau j}\tK_j^{-1}$, the $\imath${}Serre relation \eqref{relation2} in $\Ui$ follows from the identities \eqref{eqn:general F}--\eqref{eqn:general E}, and it suffices to prove \eqref{eqn:general F} by Lemma~\ref{lem: E equivalent F}.

As \eqref{eqn:general F} is a statement in a rank 2 quantum group, for simplicity of notations, we further set  $i=1$ and $j=2$ in the remainder of this section.

\begin{rem}
  \label{rem:equiv+}
 When we deal with general $\imath${}quantum groups as in Remark~\ref{rem:split+}, a variant of Lemma~\ref{lem: E equivalent F} remains valid when we use a variant of the identity \eqref{eqn:general E} where $E_{\tau j}$ is replaced by $\texttt{T}_{w_\bullet} (E_{\tau j})$. Hence in this case, the $\imath${}Serre relation \eqref{relation2} in $\Ui$ follows again from the identity \eqref{eqn:general F} alone (which we establish in this paper).
\end{rem}

\subsection{Expansion formula of the $\imath${}divided powers}

Fix $i=1$ and $j=2$. Recall from \eqref{eq:1star} the notation for the idempotents $\onestar_\la$, for $\la\in\Z$.
The following expansion formulas will play a crucial role in proving \eqref{eqn:general F}.

\begin{lem}
 \cite[Propositions 2.8, 3.5]{BeW18}
   \label{lem:iDPdot}
For $m\ge 1$ and $\la \in \Z$, we have
\begin{align}
B_{1,\ev}^{(2m)} \onestar_{2\la}
&\small
= \sum_{c=0}^m \sum_{a=0}^{2m-2c} q_1^{2(a+c)(m-a-\la)-2ac-\binom{2c+1}{2}} \qbinom{m-c-a-\la}{c}_{q_1^2}
 E^{(a)}_1  F^{(2m-2c-a)}_1\onestar_{2\la},
\label{t2mdot}
\\
B_{1,\ev}^{(2m-1)} \onestar_{2\la}
&=  \sum_{c=0}^{m-1} \sum_{a=0}^{2m-1-2c}
\label{t2m-1dot}\\
&
\quad q_1^{2(a+c)(m-a-\la)-2ac-a-\binom{2c+1}{2}}  \qbinom{m-c-a-\la-1}{c}_{q_1^2}  E_1^{(a)}  F_1^{(2m-1-2c-a)}\onestar_{2\la}.
\notag
\\
B_{1,\odd}^{(2m)} \onestar_{2\la-1}
&= \sum_{c=0}^m \sum_{a=0}^{2m-2c}
\label{t2mdot2}  \\
&
\quad q_1^{2(a+c)(m-a-\la)-2ac+a-\binom{2c}{2}} \qbinom{m-c-a-\la}{c}_{q_1^2}
 E^{(a)}_1  F^{(2m-2c-a)}_1\onestar_{2\la-1},
\notag
\\
B_{1,\odd}^{(2m+1)} \onestar_{2\la-1}
&=  \sum_{c=0}^{m} \sum_{a=0}^{2m+1-2c}
\label{t2m-1dot2} \\
&
\small \quad q_1^{2(a+c)(m-a-\la)-2ac+2a-\binom{2c}{2}}  \qbinom{m-c-a-\la+1}{c}_{q_1^2}  E_1^{(a)}  F_1^{(2m+1-2c-a)}\onestar_{2\la-1}.
\notag
\end{align}
In particular, we have $B_{1,\ev}^{(n)}\onestar_{2\la}\in\,_{\A}\Udot_{1,\ev}$ and
$B_{1,\odd}^{(n)}\onestar_{2\la-1}\in \,_{\A}\Udot_{1,\odd}$ for all $n\in \N$.
\end{lem}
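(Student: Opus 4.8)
The plan is to reduce everything to the rank-one quantum group $\U_q(\sll_2)$ generated by $E_1,F_1,\tK_1^{\pm1}$, since $B_1$ and all the idempotents $\onestar_{2\la},\onestar_{2\la-1}$ involve only the index $1$. The simplifying feature of passing to $\Udot$ is that $\tK_1^{\pm1}$ act on $\onestar_{2\la}$ (resp.\ $\onestar_{2\la-1}$) as the scalars $q_1^{\pm2\la}$ (resp.\ $q_1^{\pm(2\la-1)}$), so that all four identities become identities among $\A$-linear combinations of the PBW vectors $E_1^{(a)}F_1^{(b)}\onestar$. I would prove the two even formulas \eqref{t2mdot}--\eqref{t2m-1dot} by a single induction on $n$ (alternating between $n$ even and $n$ odd), and the two odd formulas \eqref{t2mdot2}--\eqref{t2m-1dot2} by a parallel induction; the engine in each case is a three-term recursion for the $\imath$divided powers.

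First I would extract these recursions from the closed definitions \eqref{eq:iDPodd}--\eqref{eq:iDPev}. Working with distinguished parameters (so $q_1\vs_1=1$ by \eqref{eq:par0}) and writing $[m]=[m]_{q_1}$, the even family reads $B_{1,\ev}^{(2k)}=\frac{1}{[2k]!}\prod_{j=1}^{k}(B_1^2-[2j-2]^2)$ and $B_{1,\ev}^{(2k+1)}=\frac{1}{[2k+1]!}B_1\prod_{j=1}^{k}(B_1^2-[2j]^2)$, and similarly for the odd family. Since these are polynomials in the single variable $B_1$, a short telescoping computation (using $\prod_{j=1}^{k}(B_1^2-[2j-2]^2)=B_1^2\prod_{j=1}^{k-1}(B_1^2-[2j]^2)$ and the analogue for the odd case) yields recursions of the shape
\[
B_1\,B_{1,\ev}^{(n)}=[n+1]\,B_{1,\ev}^{(n+1)}+\beta_n\,B_{1,\ev}^{(n-1)},
\]
with an explicit constant $\beta_n$ (for instance $\beta_{2k}=[2k]$), and an entirely analogous recursion for the odd family. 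Applying such an identity to $\onestar_{2\la}$ reduces the inductive step to computing the left multiplication $B_1\cdot\bigl(B_{1,\ev}^{(n)}\onestar_{2\la}\bigr)$ from the formula already known for $B_{1,\ev}^{(n)}\onestar_{2\la}$.

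The core of the induction step is this left multiplication. Writing $B_1=F_1+q_1^{-1}E_1\tK_1^{-1}$ and applying it to a single PBW vector, the standard rank-one commutation relations (equivalently \eqref{eqn:commutate-idempotent3}--\eqref{eqn:commutate-idempotent4}) together with $\tK_1^{\pm1}\onestar_{2\la}=q_1^{\pm2\la}\onestar_{2\la}$ give
\[
B_1\,E_1^{(a)}F_1^{(b)}\onestar_{2\la}
=[b+1]\,E_1^{(a)}F_1^{(b+1)}\onestar_{2\la}
+q_1^{-1-2a+2b-2\la}[a+1]\,E_1^{(a+1)}F_1^{(b)}\onestar_{2\la}
+[2b-2\la-a+1]\,E_1^{(a-1)}F_1^{(b)}\onestar_{2\la}.
\]
Thus $B_1\cdot B_{1,\ev}^{(n)}\onestar_{2\la}$ collects, for each target vector $E_1^{(A)}F_1^{(B)}\onestar_{2\la}$, exactly three contributions (raising $b$, raising $a$, lowering $a$). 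Subtracting $\beta_n\,B_{1,\ev}^{(n-1)}\onestar_{2\la}$ (known by induction) and dividing by $[n+1]$, one must check that the resulting coefficient matches the explicit coefficient claimed for $B_{1,\ev}^{(n+1)}\onestar_{2\la}$.

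I expect the main obstacle to be precisely this coefficient matching. After substituting the exponents from \eqref{t2mdot}--\eqref{t2m-1dot}, equality becomes a $q$-binomial identity relating Gaussian binomials $\qbinom{\,\cdot\,}{c}_{q_1^2}$ with top arguments shifted by $1$, weighted by Laurent monomials whose exponents are quadratic in $a,c,m,\la$ together with the $\binom{2c\pm1}{2}$ and $\binom{2c}{2}$ contributions. Proving it amounts to applying the two forms of the $q^2$-Pascal recursion $\qbinom{x}{c}_{q_1^2}=q_1^{2c}\qbinom{x-1}{c}_{q_1^2}+\qbinom{x-1}{c-1}_{q_1^2}=\qbinom{x-1}{c}_{q_1^2}+q_1^{2(x-c)}\qbinom{x-1}{c-1}_{q_1^2}$, but the delicate point is reconciling the quadratic exponents across the three shifted terms, which is where essentially all the work lies. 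The base cases $n=0,1$ are checked by hand, and the final ``in particular'' assertion is then immediate: every coefficient produced is a product of a Laurent monomial in $q_1$ and a Gaussian binomial, hence lies in $\A$, while the right idempotent $\onestar_{2\la}$ (resp.\ $\onestar_{2\la-1}$) places the whole expression in ${}_\A\Udot_{1,\ev}$ (resp.\ ${}_\A\Udot_{1,\odd}$).
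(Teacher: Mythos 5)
The paper does not prove this lemma at all: it is quoted verbatim from \cite[Propositions 2.8, 3.5]{BeW18}, so there is no in-paper argument to compare against. Your proposal is essentially a reconstruction of the proof in that reference, and its skeleton checks out. The recursions you extract are correct (with distinguished parameters one indeed gets $B_1B_{1,\ev}^{(2k)}=[2k+1]B_{1,\ev}^{(2k+1)}+[2k]B_{1,\ev}^{(2k-1)}$ and $B_1B_{1,\ev}^{(2k+1)}=[2k+2]B_{1,\ev}^{(2k+2)}$, i.e.\ $\beta_{2k}=[2k]$, $\beta_{2k+1}=0$, and the analogous pair for the $\odd$ family), and your three-term formula for $B_1\,E_1^{(a)}F_1^{(b)}\onestar_{2\la}$ is exactly what \eqref{eqn:commutate-idempotent3} and $\tK_1^{-1}\onestar_{2\la+2a-2b}=q_1^{2b-2a-2\la}\onestar_{2\la+2a-2b}$ give; I checked that the induction step reproduces \eqref{t2mdot} for $m=1$, including the $c=1$ term $q_1^{-2\la-1}\qbinom{-\la}{1}_{q_1^2}\onestar_{2\la}$ coming from $[{-2\la}]_{q_1}/[2]_{q_1}$. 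The one genuine shortfall is that the coefficient matching in the general inductive step — which you correctly identify as the bulk of the work — is described but not carried out; since the tools you name ($q^2$-Pascal recursions applied to the three shifted contributions, after subtracting the $\beta_n$ term) are the right ones and the small cases confirm the bookkeeping, this is an unexecuted computation rather than a missing idea, but it would need to be written out for the proof to be complete. The closing observation that all coefficients lie in $\A$, giving $B_{1,\ev}^{(n)}\onestar_{2\la}\in{}_\A\Udot_{1,\ev}$ and $B_{1,\odd}^{(n)}\onestar_{2\la-1}\in{}_\A\Udot_{1,\odd}$, is correct as stated.
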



The identity \eqref{eqn:general F} is equivalent to the following 4 relations \eqref{eq:serre11F}--\eqref{eq:serre11evenodd}:
\begin{align}
\sum_{n=0}^{2m+1} (-1)^n  B_{1,\ev}^{(n)}F_2 B_{1,\ev}^{(2m+1-n)}=0, &\quad \text{ if }-a_{12}=2m \in 2\N;
  \label{eq:serre11F} \\
\sum_{n=0}^{2m+1} (-1)^n  B_{1,\odd}^{(n)}F_2 B_{1,\odd}^{(2m+1-n)}=0, &\quad \text{ if }-a_{12}=2m \in 2\N;
  \label{eq:serre11odd}\\
\sum_{n=0}^{2m} (-1)^n  B_{1,\odd}^{(n)}F_2 B_{1,\ev}^{(2m-n)}=0, &\quad \text{ if }-a_{12}=2m-1 \in2\N+1;
  \label{eq:serre11oddeven}\\
\sum_{n=0}^{2m} (-1)^n  B_{1,\ev}^{(n)}F_2 B_{1,\odd}^{(2m-n)}=0, &\quad \text{ if }-a_{12}=2m-1 \in 2\N+1.
  \label{eq:serre11evenodd}
\end{align}
The necessity of applying different formulas in Lemma~\ref{lem:iDPdot} forces us to divide the proof of the identity \eqref{eqn:general F} into the 4 cases \eqref{eq:serre11F}--\eqref{eq:serre11evenodd}.

In the remainder of this section, we reduce the proof of the identity \eqref{eq:serre11F} to the $q$-binomial identity $T(w,u,\ell)=0$ in Theorem~\ref{thm:T=0};  similar reductions of the other relations \eqref{eq:serre11odd}--\eqref{eq:serre11evenodd} to the same identity are given in Appendix \ref{sec:appendix}.

\subsection{Computing $\imath${}Serre in $\Udot$}
   \label{subsection: QSP11F}

Let $a_{12}=-2m$. We shall use (\ref{t2mdot})--(\ref{t2m-1dot}) to rewrite the element
\begin{align}
  \label{eq:Serre-idem}
\sum_{n=0}^{2m+1} (-1)^n  B_{1,\ev}^{(n)}F_2 B_{1,\ev}^{(2m+1-n)}\onestar_{2\la} \in \Udot
\end{align}
for any $\la\in\Z$ in terms of monomial basis in $E_1, F_1, F_2$.

\vspace{2mm}
\noindent{\underline{Case I: $n$ is even}.} It follows from \eqref{t2m-1dot} that
\begin{align*}
B_{1,\ev}^{(2m+1-n)}\onestar_{2\la}
&=\sum_{c=0}^{m-\frac{n}{2}} \sum_{a=0}^{2m+1-n-2c}q_1^{(a+c)(2m+2-n-2a-2\la)-2ac-a-c(2c+1)} \\&
\qquad \cdot \qbinom{m-\frac{n}{2}-c-a-\la}{c}_{q_1^2}  E_1^{(a)}  F_1^{(2m+1-n-2c-a)}\onestar_{2\la}.
\notag
\end{align*}
Since $a_{12}=-2m$, by \eqref{eqn: idempotent Ei Fi}--\eqref{eqn: idempotent Ej Fj} we have $F_2\onestar_{\la}=\onestar_{\la+2m}F_2$, and hence
\begin{align*}
F_2 E_1^{(a)}  F_1^{(2m+1-n-2c-a)}\onestar_{2\la}
&=\onestar_{2(\la+2a-m-1+n+2c)}F_2 E_1^{(a)}  F_1^{(2m+1-n-2c-a)}.
\end{align*}
Furthermore, by using (\ref{t2mdot}), we have
\begin{align*}
B_{1,\ev}^{(n)} & \onestar_{2(\la+2a-m-1+n+2c)}\\
&=\sum_{e=0}^{\frac{n}{2}} \sum_{d=0}^{n-2e} q_1^{2(d+e)(\frac{n}{2}-d-\la-2a+m+1-n-2c)-2de-e(2e+1)} \\
 &\qquad \cdot \qbinom{\frac{n}{2}-e-d-\la-2a-n-2c+m+1}{e}_{q_1^2}  E_1^{(d)}  F_1^{(n-2e-d)}\onestar_{2(\la+2a-m-1+n+2c)}
\\ &=\sum_{e=0}^{\frac{n}{2}} \sum_{d=0}^{n-2e}
 q_1^{2(d+e)(m+1-d-\la-2a-\frac{n}{2}-2c)-2de-e(2e+1)} \\
 &\qquad \cdot \qbinom{m+1-e-d-\la-2a-\frac{n}{2}-2c}{e}_{q_1^2}  E_1^{(d)}  F_1^{(n-2e-d)}\onestar_{2(\la+2a+n+2c-m-1)}.
\end{align*}
Hence combining the above 3 computations gives us
\begin{align}
  \label{eq:BFB}
B_{1,\ev}^{(n)} & F_2 B_{1,\ev}^{(2m+1-n)}\onestar_{2\la} \\
 &=\sum_{e=0}^{\frac{n}{2}} \sum_{d=0}^{n-2e}\sum_{c=0}^{m-\frac{n}{2}} \sum_{a=0}^{2m+1-n-2c}
q_1^{(a+c+d+e)(2m+1-n-2\la-2a-2c-2d-2e)+d}
  \notag \\
 & \quad \cdot \qbinom{m+1-e-d-\la-2a-\frac{n}{2}-2c}{e}_{q_1^2} \qbinom{m-\frac{n}{2}-c-a-\la}{c}_{q_1^2}
  \notag \\
 & \quad \cdot E_1^{(d)}  F_1^{(n-2e-d)}F_2 E_1^{(a)}  F_1^{(2m+1-n-2c-a)}\onestar_{2\la}.
 \notag
\end{align}
Next, we move the divided powers of $E_1$ in the middle to the left.
Using \eqref{eqn: idempotent Ei Fi}--\eqref{eqn: idempotent Ej Fj} we have
\begin{align*}
F_2F_1^{(2m+1-n-2c-a)}\onestar_{2\la} 
 &=\onestar_{2(\la+n+2c+a-m-1)}F_2  F_1^{(2m+1-n-2c-a)}   ;
\end{align*}
Using \eqref{eqn:commutate-idempotent3} we have
\begin{align*}
F_1^{(n-2e-d)} & E_1^{(a)} \onestar_{2(\la+n+2c+a-m-1)}\\
&=\sum^{\min\{a,n-2e-d\}}_{r=0}\qbinom{n-2e-d-a-2(\la+n+2c+a-m-1)}{r}_{q_1}\\
&\qquad\qquad\qquad\quad \cdot E_1^{(a-r)}F_1^{(n-2e-d-r)}\onestar_{2(\la+n+2c+a-m-1)}\\
&=\sum^{\min\{a,n-2e-d\}}_{r=0}\qbinom{2m+2-2e-d-3a-2\la-4c-n}{r}_{q_1} \\
&\qquad\qquad\qquad\quad \cdot E_1^{(a-r)} F_1^{(n-2e-d-r)}\onestar_{2(\la+n+2c+a-m-1)}.
\end{align*}

Plugging these new formulas into \eqref{eq:BFB}, we obtain
\begin{align}
&  \sum_{n=0,2\mid n}^{2m+1}
    B_{1,\ev}^{(n)}F_2 B_{1,\ev}^{(2m+1-n)}\onestar_{2\la}\label{eqn: first even}\\
&=\sum_{n=0,2\mid n}^{2m+1}\sum_{c=0}^{m-\frac{n}{2}}\sum_{e=0}^{\frac{n}{2}} \sum_{a=0}^{2m+1-n-2c}\sum_{d=0}^{n-2e}\sum^{\min\{a,n-2e-d\}}_{r=0}q_1^{(a+c+d+e)(2m+1-n-2\la-2a-2c-2d-2e)+d}\notag\\
&\quad\cdot \qbinom{a+d-r}{d}_{q_1}
\qbinom{2m+2-2e-d-3a-2\la-4c-n}{r}_{q_1}
\qbinom{m-\frac{n}{2}-c-a-\la}{c}_{q_1^2} \notag \\
& \quad \cdot
\qbinom{m+1-e-d-\la-2a-\frac{n}{2}-2c}{e}_{q_1^2}
E_1^{(a+d-r)}F_1^{(n-2e-d-r)}F_2F_1^{(2m+1-n-2c-a)}\onestar_{2\la}.
\notag
\end{align}

\vspace{2mm}
\noindent{\underline{Case II: $n$ is odd}.}
Similarly, by (\ref{t2mdot}) we have
\begin{align*}
B_{1,\ev}^{(2m+1-n)}\onestar_{2\la}&=\sum_{c=0}^{m+\frac{1-n}{2}} \sum_{a=0}^{2m+1-n-2c} q_1^{(a+c)(2m+1-n-2a-2\la)-2ac-c(2c+1)} \\&
\qquad \cdot \qbinom{m+\frac{1-n}{2}-c-a-\la}{c}_{q_1^2}  E_1^{(a)}  F_1^{(2m+1-n-2c-a)}\onestar_{2\la}.
\notag
\end{align*}
Using (\ref{t2m-1dot}) we have
\begin{align*}
B_{1,\ev}^{(n)} & \onestar_{2(\la+2a-m-1+n+2c)}\\
&=\sum_{e=0}^{\frac{n-1}{2}} \sum_{d=0}^{n-2e} q_1^{2(d+e)(\frac{n+1}{2}-d-\la-2a+m+1-n-2c)-2de-d-e(2e+1)} \\
&\quad \cdot \qbinom{\frac{n+1}{2}-e-d-\la-2a-n-2c+m}{e}_{q_1^2}  E_1^{(d)}  F_1^{(n-2e-d)}\onestar_{2(\la+2a-m-1+n+2c)}
\\ &=\sum_{e=0}^{\frac{n-1}{2}} \sum_{d=0}^{n-2e}q_1^{2(d+e)(m+\frac{3}{2}-d-\la-2a-\frac{n}{2}-2c)-2de-d-e(2e+1)} \\
&\quad \cdot \qbinom{m+\frac{1}{2}-e-d-\la-2a-\frac{n}{2}-2c}{e}_{q_1^2}  E_1^{(d)}  F_1^{(n-2e-d)}\onestar_{2(\la+2a+n+2c-m-1)}.
\end{align*}
Combining the above two formulas and simplifying the resulting expression, we obtain the following equality:
\begin{align}
& \sum_{n=1,2\nmid n}^{2m+1} B_{1,\ev}^{(n)}F_2 B_{1,\ev}^{(2m+1-n)}\onestar_{2\la}\label{eqn: first odd}\\
&  =  \sum_{n=1,2\nmid n}^{2m+1}\sum_{c=0}^{m+\frac{1-n}{2}} \sum_{e=0}^{\frac{n-1}{2}}
\sum_{a=0}^{2m+1-n-2c}\sum_{d=0}^{n-2e}\sum^{\min\{a,n-2e-d\}}_{r=0}
q_1^{(a+c+d+e)(2m+2-n-2\la-2a-2c-2d-2e)-a-2c} \notag \\
&  \qquad \cdot \qbinom{a+d-r}{d}_{q_1}
\qbinom{2m+2-2e-d-3a-2\la-4c-n}{r}_{q_1}
\qbinom{m+\frac{1-n}{2}-c-a-\la}{c}_{q_1^2}   \notag\\
& \qquad \cdot
 \qbinom{m+\frac{1}{2}-e-d-\la-2a-\frac{n}{2}-2c}{e}_{q_1^2}
 E_1^{(a+d-r)}F_1^{(n-2e-d-r)}F_2F_1^{(2m+1-n-2c-a)}\onestar_{2\la}.
\notag
\end{align}

Therefore, by combining the computations \eqref{eqn: first even}--\eqref{eqn: first odd} which depend on the parity of $n$ above, we obtain the following formula for \eqref{eq:Serre-idem}:
\begin{align}\label{eq:evev}
\small \sum_{n=0}^{2m+1} (-1)^n & B_{1,\ev}^{(n)}F_2 B_{1,\ev}^{(2m+1-n)}\onestar_{2\la}= \\ \notag
&\small \sum_{n=0,2\mid n}^{2m}\sum_{c=0}^{m-\frac{n}{2}}\sum_{e=0}^{\frac{n}{2}} \sum_{a=0}^{2m+1-n-2c}\sum_{d=0}^{n-2e}\sum^{\min\{a,n-2e-d\}}_{r=0}q_1^{(a+c+d+e)(2m+1-n-2\la-2a-2c-2d-2e)+d} \notag \\
&\small \cdot \qbinom{a+d-r}{d}_{q_1}\qbinom{2m+2-n-2\la-2e-d-3a-4c}{r}_{q_1} \qbinom{m-\frac{n}{2}-\la-c-a}{c}_{q_1^2}
\notag\\
&\small \cdot \qbinom{m+1-\frac{n}{2}-\la-e-d-2a-2c}{e}_{q_1^2}E_1^{(a+d-r)}F_1^{(n-2e-d-r)}F_2F_1^{(2m+1-n-2c-a)}\onestar_{2\la} \notag \\
-&\small \sum_{n=1,2\nmid n}^{2m+1}\sum_{c=0}^{m+\frac{1-n}{2}}\sum_{e=0}^{\frac{n-1}{2}} \sum_{a=0}^{2m+1-n-2c}\sum_{d=0}^{n-2e}\sum^{\min\{a,n-2e-d\}}_{r=0}q_1^{(a+c+d+e)(2m+2-n-2\la-2a-2c-2d-2e)-a-2c} \notag \\
&\small \cdot \qbinom{a+d-r}{d}_{q_1}\qbinom{2m+2-n-2\la-2e-d-3a-4c}{r}_{q_1} \qbinom{m+\frac{1-n}{2}-\la-c-a}{c}_{q_1^2}
\notag\\
&\small \cdot \qbinom{m+\frac{1-n}{2}-\la-e-d-2a-2c}{e}_{q_1^2}E_1^{(a+d-r)}F_1^{(n-2e-d-r)}F_2F_1^{(2m+1-n-2c-a)}\onestar_{2\la}. \notag
\end{align}


Observe the monomials on the right-hand side of the equation \eqref{eq:evev} above are of the form $E_1^{(\ell)}F_1^{(y)}F_2F^{(2m+1-\ell-y-2u)}_1\onestar_{2\la}$, for $\ell, y, u \in \N$, so let us change variables to allow us to collect the like terms together.
Set
\[
\ell =a+d-r, \qquad y =n-2e-d-r.
\]
Noting that $n+2c+a-\ell-y$ is even, we can write
\[
n+2c+a-\ell-y =2u, \qquad \text{ for some }u\in\Z.
\]
Then we have $2m+1-n-2c-a=2m+1-\ell-y-2u$,
$a=\ell+y+2u-2c-n$, $d=n+c-e-u-y$, and
\[
u=c+e+r.
\]

If $u=0$ and $\ell=0$, then $e=c=r=a=d=0$. In this case, collecting the corresponding monomials in \eqref{eq:evev} together gives us $\sum^{2m+1}_{n=0}(-1)^nF^{(n)}_1F_2F^{(2m+1-n)}_1\onestar_{2\la}$, which equals $0$ by the $q$-Serre relation \eqref{eq:serre4}.

Now assume $u, \ell \in \N$, not both $0$. Then the monomial $E_1^{(\ell)}F_1^{(y)}F_2F^{(2m+1-\ell-y-2u)}_1\onestar_{2\la}$ in \eqref{eq:evev} has coefficient given by $q_1^{(\ell+u)(2m+1-2\la-2\ell-3u-y)} S(y,u,\ell, \la)$, where
\begin{align}
S(y,u,\ell, \la)
&:=\sum^{2m}_{n=0,2\mid n}
\sum_{\substack{c,e,r\geq0 \\ c+e+r=u}}
q_1^{(u+y-n)(\ell+u-1)+c-e}  \label{eq:T(x,y,u;w)}
\\
&\quad\cdot \qbinom{\ell}{-u-y-e+c+n}_{q_1}\qbinom{2m+2-2\la-5u-3\ell-2y-e+c+n}{r}_{q_1}\notag
\\
& \quad\cdot \qbinom{m-\la-2u-\ell-y+c+\frac{n}{2}}{c}_{q_1^2}\qbinom{m+1-\la-2\ell-y-3u+\frac{n}{2}+c}{e}_{q_1^2} \notag
\\
\notag &-\sum^{2m+1}_{n=1,2\nmid n}\sum_{\substack{c,e,r\geq0 \\ c+e+r=u}}q_1^{(u+y-n)(\ell+u-1)}\\ \notag
&\quad\cdot \qbinom{\ell}{-u-y-e+c+n}_{q_1}\qbinom{2m+2-2\la-5u-3\ell-2y-e+c+n}{r}_{q_1}
\\
&\quad\cdot \qbinom{m-\la-2u-\ell-y+c+\frac{n+1}{2}}{c}_{q_1^2}\qbinom{m-\la-2\ell-y-3u+\frac{n+1}{2}+c}{e}_{q_1^2}.\ \ \   \notag
\end{align}
Rewriting the identity \eqref{eq:evev} using \eqref{eq:T(x,y,u;w)} and its preceding discussions, we have proved the following.

\begin{prop}
We have
\begin{align}
  \label{eq:BFB=S}
&\sum_{n=0}^{2m+1} (-1)^n  B_{1,\ev}^{(n)}F_2 B_{1,\ev}^{(2m+1-n)}\onestar_{2\la}\\
&\quad =
\sum_{\substack{ \ell,y,u\geq0;u+\ell>0 \\ \ell+y+2u\leq 2m+1}}
q_1^{(\ell+u)(2m+1-2\la-2\ell-3u-y)} S(y,u,\ell,\la)E_1^{(\ell)}F_1^{(y)}F_2F^{(2m+1-\ell-y-2u)}_1\onestar_{2\la}.
\notag
\end{align}
\end{prop}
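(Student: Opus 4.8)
The statement is a bookkeeping identity: its right-hand side is obtained from the already-derived formula \eqref{eq:evev} by collecting, for each fixed monomial $E_1^{(\ell)}F_1^{(y)}F_2F_1^{(2m+1-\ell-y-2u)}\onestar_{2\la}$, all contributions to its coefficient. The plan is therefore to carry out the change of variables indicated just before the statement and to match coefficients term by term. First I would substitute the new variables
\[
\ell = a+d-r, \qquad y = n-2e-d-r, \qquad u = c+e+r
\]
(equivalently $a = \ell+y+2u-2c-n$ and $d = n+c-e-u-y$) into the two sums on the right of \eqref{eq:evev}. The key algebraic simplification is that under these substitutions the quantity governing the overall $q_1$-power collapses to $a+c+d+e = \ell+u$, which lets me factor the common prefactor $q_1^{(\ell+u)(2m+1-2\la-2\ell-3u-y)}$ out of each term exactly as in the statement; the residual $q_1$-exponents are then what must be matched against the two exponents in \eqref{eq:T(x,y,u;w)}.

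The core of the argument is this exponent matching. For the even-$n$ (Case I) contribution I would rewrite the second factor of its exponent as
\[
2m+1-n-2\la-2(\ell+u) = (2m+1-2\la-2\ell-3u-y)+(u+y-n),
\]
so that after removing the common prefactor the leftover exponent is $(\ell+u)(u+y-n)+d$; using $d = n+c-e-u-y$ this equals $(u+y-n)(\ell+u-1)+c-e$, precisely the exponent in the first sum of \eqref{eq:T(x,y,u;w)}. The odd-$n$ (Case II) contribution is treated the same way, the extra $-a-2c$ cancelling against $a=\ell+y+2u-2c-n$ to produce the leftover exponent $(u+y-n)(\ell+u-1)$ of the second sum. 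In parallel, I would feed the same expressions for $a$ and $d$ into each $q_1$- and $q_1^2$-binomial coefficient in \eqref{eq:evev} and check that they become the four binomials of the corresponding sum in \eqref{eq:T(x,y,u;w)}; for instance $\qbinom{a+d-r}{d}_{q_1}=\qbinom{\ell}{n+c-e-u-y}_{q_1}$, while the argument $2m+2-n-2\la-2e-d-3a-4c$ simplifies to $2m+2-2\la-5u-3\ell-2y-e+c+n$, and the two $q_1^2$-binomials similarly reduce to the arguments $m-\la-2u-\ell-y+c+\tfrac{n}{2}$ and $m+1-\la-2\ell-y-3u+\tfrac{n}{2}+c$.

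Finally I would isolate the boundary case $u=\ell=0$: here $c=e=r=a=d=0$ forces the monomial $F_1^{(n)}F_2F_1^{(2m+1-n)}\onestar_{2\la}$, and summing its coefficients over $n$ reproduces $\sum_{n=0}^{2m+1}(-1)^n F_1^{(n)}F_2F_1^{(2m+1-n)}\onestar_{2\la}$, which vanishes by the $q$-Serre relation \eqref{eq:serre4}; this is exactly what justifies the restriction $u+\ell>0$ in the stated sum.

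I expect no conceptual obstacle, since the whole argument is a reorganization of \eqref{eq:evev}. The genuinely laborious and error-prone step is the simultaneous verification, in both parity cases, that all four binomial factors and the residual $q_1$-exponent transform exactly into the summand of $S(y,u,\ell,\la)$ in \eqref{eq:T(x,y,u;w)}; the identity $a+c+d+e=\ell+u$ is the single observation that makes this clean factorization possible, and keeping track of the even/odd split of $n$ through the substitution is the part demanding the most care.
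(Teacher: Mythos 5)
Your proposal is correct and follows essentially the same route as the paper: starting from \eqref{eq:evev}, you perform the substitution $\ell=a+d-r$, $y=n-2e-d-r$, $u=c+e+r$, use $a+c+d+e=\ell+u$ to factor out the common prefactor $q_1^{(\ell+u)(2m+1-2\la-2\ell-3u-y)}$, match the residual exponents and the four binomial arguments against \eqref{eq:T(x,y,u;w)} in both parity cases, and dispose of the $u=\ell=0$ terms via the $q$-Serre relation \eqref{eq:serre4}. Your explicit exponent checks (e.g.\ $(\ell+u)(u+y-n)+d=(u+y-n)(\ell+u-1)+c-e$ in the even case and the cancellation of $-a-2c$ in the odd case) are exactly the verifications implicit in the paper's derivation, and they are correct.
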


\subsection{Proof of Proposition~\ref{prop:Serre=T}}


Using new variables $t :=-u-y-e+c+n$ and $w :=2m+2-2\la-2\ell-4u-y$, we have
\begin{equation}
  \label{eq:S=T}
S(y,u,\ell, \la)=T(w,u,\ell)|_{q\mapsto q_1}
\end{equation}
by a direct calculation; cf. \eqref{eq:Twux} for notation $T(w,u,\ell)$ and \eqref{eq:T(x,y,u;w)} for notation $S(y,u,\ell, \la)$.

By assumption, $T(w,u,\ell)=0$ for any $w\in\Z$ and $u,\ell \in \N$ with $u,\ell$ not both $0$. Hence by \eqref{eq:S=T} we have $S(y,u,\ell, \la)=0$, and then by \eqref{eq:BFB=S} we obtain
\begin{align}
  \label{eq:se1=0}
\sum_{n=0}^{2m+1} (-1)^n  B_{1,\ev}^{(n)}F_2 B_{1,\ev}^{(2m+1-n)}\onestar_{2\la} =0,
\qquad \forall \la\in\Z.
\end{align}
Thanks to Remark~\ref{rem:u=0}, this proves the identity \eqref{eq:serre11F}.

Similar reductions of the identities \eqref{eq:serre11odd}--\eqref{eq:serre11evenodd} to the $q$-binomial identity $T(w,u,\ell)=0$ in Theorem~\ref{thm:T=0} can be found in Appendix \ref{sec:appendix}.

Being equivalent to the 4 identities \eqref{eq:serre11F}--\eqref{eq:serre11evenodd}, the identity \eqref{eqn:general F} follows. Then by the reduction in \S\ref{subsec:equiv}, the $\imath${}Serre relation \eqref{relation2} holds. Proposition~\ref{prop:Serre=T} is proved.
\qed

\section{A $q$-binomial identity and generalization}
   \label{sec:T=0}

The section is devoted to a proof of Theorem~ \ref{thm:T=0}. We will first generalize $T(w,u,\ell)$ to a function $G$ which involves several new variables, and then establish various recursive relations for $G$ to show some generalized identities involving $G$.

\subsection{Function $G$ and its recursions}

For $w, p_0, p_1, p_2\in \bbZ$ and $u,\ell  \in \bbZ_{\ge 0}$, we define
\begin{align}\label{eq:Twxyu}
G(w,u, & \ell;p_0,p_1,p_2):=(-1)^{w}q^{u^2-wu+\ell u}\\
&\cdot \left\{ \sum_{\substack{c,e,r\geq0 \\ c+e+r=u}}
\sum^{\ell }_{\substack{t=0 \\ 2\mid(t+w-r) }}
q^{-t(\ell +u-1)-u(c+e)+2c+rp_0+2cp_1+2ep_2} \right.  \notag\\ \notag
&\qquad \cdot \qbinom{\ell }{t}_q\qbinom{w+t+p_0}{r}_q \qbinom{\frac{w+t-r}{2}+p_1}{c}_{q^2}\qbinom{\frac{w+t-r}{2}+p_2}{e}_{q^2}\\  \notag
&\quad -\sum_{\substack{c,e,r\geq0 \\ c+e+r=u}}
\sum^{\ell }_{\substack{t=0 \\ 2\nmid(t+w-r) }}
q^{-t(\ell +u-1)-(u-1)(c+e)+rp_0+2cp_1+2ep_2}\\ \notag
& \left. \qquad \cdot \qbinom{\ell }{t}_q\qbinom{w+t+p_0}{r}_q \qbinom{1+\frac{w+t-r-1}{2}+p_1}{c}_{q^2}\qbinom{\frac{w+t-r-1}{2}+p_2}{e}_{q^2} \right\}.  \notag
\end{align}
The following relation between $T$ and $G$ follows by definitions in \eqref{eq:Twux} and \eqref{eq:Twxyu}:
\begin{equation}
  \label{eq:T=G}
T(w,u,\ell )=(-1)^wq^{wu-u^2}G(w,u,\ell ;-\ell ,u-1,-\ell ).
\end{equation}

\begin{lem}\label{lem:Gwuxp0p1p2}
For any $w,p_0,p_1,p_2,k\in \bbZ$ and $u,\ell  \in \bbZ_{\ge 0}$, we have the following recursive relations:
\begin{align}
 &G(w+1, u, \ell ; p_0, p_1, p_2) \label{eq:Gw+1}\\
 &=q^{-2u}G(w,u,\ell ;p_0,p_2,p_1+1)-q^{2p_0+\ell }G(w,u-1,\ell ;p_0,p_1,p_2); \notag\\
&G(w,u,\ell ;p_0,p_1+1,p_2)=G(w,u,\ell ;p_0,p_1,p_2)+q^{4p_1+\ell +4}G(w,u-1,\ell ;p_0,p_1,p_2); \label{eq:G1+1} \\
&G(w,u,\ell ;p_0,p_1,p_2+1)=G(w,u,\ell ;p_0,p_1,p_2)+q^{4p_2+\ell +2}G(w,u-1,\ell ;p_0,p_1,p_2);\label{eq:G2+1} \\
&G(w,u,\ell +1; p_0,p_1,p_2)=q^{u}G(w,u,\ell ;p_0,p_1,p_2)-q^{u-2\ell }G(w+1,u,\ell ;p_0,p_1,p_2); \label{eq:Gx+1w}\\
&G(w,u,\ell ;p_0,p_1,p_2)=q^{4ku}G(w+2k,u,\ell ;p_0-2k,p_1-k,p_2-k); \label{eq:Gk}\\
&G(w+1,u,\ell ;p_0,p_1,p_2)=q^{-2u}G(w,u,\ell ;p_0+1,p_2,p_1+1).\label{eq:Godd}\\  \notag
\end{align}
\end{lem}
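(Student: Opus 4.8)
The plan is to treat all six recursions by elementary manipulations of the symmetric $q$-binomial coefficients, the only external input being the two Pascal recurrences
$$\qbinom{n}{d}_q = q^{-d}\qbinom{n-1}{d}_q + q^{n-d}\qbinom{n-1}{d-1}_q = q^{d}\qbinom{n-1}{d}_q + q^{d-n}\qbinom{n-1}{d-1}_q,$$
together with their base-$q^2$ versions. Throughout I keep in mind that $G(w,u,\ell;p_0,p_1,p_2)$ is, up to the global prefactor $(-1)^w q^{u^2-wu+\ell u}$, the difference of an ``even'' sum (over $t$ with $2\mid (t+w-r)$) and an ``odd'' sum (over $t$ with $2\nmid(t+w-r)$), and that the summation indices $c,e,r$ (constrained by $c+e+r=u$) and $t$ (with $0\le t\le \ell$) each attach to exactly one of the four $q$-binomial factors.

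I would first dispose of the two relations that are pure reindexing. For \eqref{eq:Gk}, the simultaneous substitution $w\mapsto w+2k$, $p_0\mapsto p_0-2k$, $p_1\mapsto p_1-k$, $p_2\mapsto p_2-k$ leaves every $q$-binomial factor and the parity condition $2\mid(t+w-r)$ unchanged, so only the exponents move: the internal exponent of each summand shifts by $-2k(r+c+e)=-2ku$ and the prefactor contributes $q^{-2ku}$, which together with the stated $q^{4ku}$ gives a net factor $1$. For \eqref{eq:Godd}, incrementing $w$ by $1$ interchanges the even and odd branches; I would match the even branch of the left side against the odd branch of $G(w,u,\ell;p_0+1,p_2,p_1+1)$ (and vice versa), using the relabeling $c\leftrightarrow e$ of summation indices to account for the swap $p_1\leftrightarrow p_2$ and the ``$+1$'' shifts built into the odd-branch binomials, and then checking that the residual $q$-powers collapse to the asserted $q^{-2u}$.

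The remaining four relations \eqref{eq:Gw+1}, \eqref{eq:G1+1}, \eqref{eq:G2+1}, \eqref{eq:Gx+1w} are each of the form ``$G$ with one argument incremented $=$ $G$ $+$ ($q$-power)$\cdot G$ with $u$ lowered by one'', and I would prove them by a uniform three-step scheme. First, locate the single $q$-binomial factor whose top entry moves under the increment: the $c$-factor for \eqref{eq:G1+1} (base $q^2$), the $e$-factor for \eqref{eq:G2+1} (base $q^2$), the factor $\qbinom{w+t+p_0}{r}_q$ for \eqref{eq:Gw+1}, and $\qbinom{\ell}{t}_q$ for \eqref{eq:Gx+1w}. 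Second, apply the appropriate Pascal recurrence to split that factor, so each branch of $G$ becomes a copy of the un-incremented summand plus a remainder carrying a $\qbinom{\,\cdot\,}{d-1}$ term. Third, reindex the relevant summation variable ($c$, $e$, $r$, or $t$) in the remainder by one, which turns $c+e+r=u$ into $c+e+r=u-1$ and thereby produces $G(\,\cdot\,,u-1,\,\cdot\,)$; here it is essential to incorporate the change of the global prefactor, whose exponent drops by $u^2-wu+\ell u-\big((u-1)^2-w(u-1)+\ell(u-1)\big)=2u-1-w+\ell$, since this is exactly what converts the $w$- and $\ell$-dependent leftover powers into the clean coefficients $q^{2p_0+\ell}$, $q^{4p_1+\ell+4}$, $q^{4p_2+\ell+2}$, and $q^{u-2\ell}$ appearing in the statements. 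A typical collapse one meets along the way is $q^{2}[n+1]_{q^2}-[n]_{q^2}=q^{2n+2}$, which is where the coefficient of the $u-1$ term crystallizes.

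I expect the main obstacle to be the parity bookkeeping rather than any single hard identity: because a shift of $w$, $r$, or $t$ changes the parity of $t+w-r$, the reindexing in the second and third steps moves terms between the even and odd branches, and one must track both the explicit sign $(-1)^w$ and the branch signs inside the braces to see the minus signs in \eqref{eq:Gw+1} and \eqref{eq:Gx+1w} emerge correctly. I would therefore record once and for all, at the outset, the effect of every elementary shift ($w\mapsto w\pm 1$, each of $c,e,r,t\mapsto\,\cdot\,\pm1$, and $u\mapsto u-1$) on the prefactor, on the parity condition, and on each binomial's top entry; with this table in hand the six recursions reduce to routine substitution. I also note a shortcut that avoids one of the four Pascal computations: \eqref{eq:Gw+1} is forced by combining \eqref{eq:Godd} with the $p_0$-shift relation obtained from the $r$-Pascal split, so it need not be derived from scratch.
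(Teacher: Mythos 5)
Your proposal matches the paper's proof: the paper likewise establishes \eqref{eq:Gw+1} by applying the Pascal recurrence \eqref{eq:ident-} to the factor $\qbinom{w+1+t+p_0}{r}_q$, permuting $c\leftrightarrow e$ in one resulting piece and reindexing $r\mapsto r+1$ in the other, proves \eqref{eq:G1+1}--\eqref{eq:G2+1} and \eqref{eq:Gx+1w} by the same splitting scheme (the latter via \eqref{eq:ident+}), and verifies \eqref{eq:Gk}--\eqref{eq:Godd} by direct inspection. One small correction to your uniform framing: in \eqref{eq:Gx+1w} the remainder term is $G(w+1,u,\ell;p_0,p_1,p_2)$ with $u$ unchanged --- reindexing $t$ does not touch the constraint $c+e+r=u$, it only flips the parity of $t+w-r$, which is exactly what produces the shift $w\mapsto w+1$ --- so it is not a ``$u$ lowered by one'' instance as you describe, though the mechanism you outline still yields the correct identity.
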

\begin{proof}
We provide a detailed argument for \eqref{eq:Gw+1}. Applying the $q$-binomial identity
\begin{align}  \label{eq:ident-}
\qbinom{m}{t}=q^{-t}\qbinom{m-1}{t}_q+q^{m-t}\qbinom{m-1}{t-1}_q
\end{align}
to the second $q$-binomial in each summand of $G(w+1,u,\ell ;p_0,p_1,p_2)$ (obtained from \eqref{eq:Twxyu} with $w\to w+1$), we have
\begin{align*}
G(w+1,u,\ell ;p_0,p_1,p_2)=S_1+S_2.
\end{align*}
Here
\begin{eqnarray*}
S_1:&=&(-1)^{w+1}q^{u^2-(w+1)u+\ell u}\\ \notag
 &&\left\{ \sum_{\substack{c,e,r\geq0 \\ c+e+r=u}}
 \sum^{\ell }_{\substack{t=0 \\ 2\nmid(t+w-r) }}
 q^{-t(\ell +u-1)-u(c+e)+2c+rp_0+2cp_1+2ep_2-r}  \right.  \\ \notag
&& \quad\cdot \qbinom{\ell }{t}_q\qbinom{w+t+p_0}{r}_q \qbinom{\frac{w+t-r-1}{2}+p_1+1}{c}_{q^2}\qbinom{1+\frac{w+t-r-1}{2}+p_2}{e}_{q^2}\\  \notag
&&\quad -\sum_{\substack{c,e,r\geq0 \\ c+e+r=u}}
\sum^{\ell }_{\substack{t=0 \\ 2\mid(t+w-r) }} q^{-t(\ell +u-1)-(u-1)(c+e)+rp_0+2cp_1+2ep_2-r}\\ \notag
&&\left. \quad \cdot \qbinom{\ell }{t}_q\qbinom{w+t+p_0}{r}_q \qbinom{\frac{w+t-r}{2}+p_1+1}{c}_{q^2}\qbinom{\frac{w+t-r}{2}+p_2}{e}_{q^2}  \right \},
\end{eqnarray*}
and
\begin{eqnarray*}
S_2&:=&(-1)^{w+1}q^{u^2-(w+1)u+\ell u}\\ \notag
 &=&
\left\{ \sum_{\substack{c,e,r\geq0 \\ c+e+r=u}}
\sum^{\ell }_{\substack{t=0 \\ 2\nmid(t+w-r) }}
q^{-t(\ell +u-1)-u(c+e)+2c+rp_0+2cp_1+2ep_2+w+1+t+p_0-r}   \right. \\ \notag
&&\quad \cdot \qbinom{\ell }{t}_q\qbinom{w+t+p_0}{r-1}_q \qbinom{\frac{w+t-(r-1)}{2}+p_1}{c}_{q^2}\qbinom{\frac{w+t-(r-1)}{2}+p_2}{e}_{q^2}\\  \notag
&&\quad -\sum_{\substack{c,e,r\geq0 \\ c+e+r=u}}
\sum^{\ell }_{\substack{t=0 \\ 2\mid(t+w-r) }}
q^{-t(\ell +u-1)-(u-1)(c+e)+rp_0+2cp_1+2ep_2+w+1+t+p_0-r}\\ \notag
&&\left.  \quad\cdot \qbinom{\ell }{t}_q\qbinom{w+t+p_0 }{r-1}_q \qbinom{1+\frac{w+t-(r-1)-1}{2}+p_1}{c}_{q^2}\qbinom{\frac{w+t-(r-1)-1}{2}+p_2}{e}_{q^2} \right \}.
\end{eqnarray*}
By permutating the variables $c$, $e$, we obtain
\begin{align*}
S_1=q^{-2u}G(w,u,\ell ;p_0,p_2,p_1+1).
\end{align*}
By a change of variables $r \mapsto r+1$,  we have
\begin{align*}
S_2=-q^{2p_0+\ell }G(w,u-1,\ell ;p_0,p_1,p_2).
\end{align*}
Then \eqref{eq:Gw+1} follows by summing up $S_1$ and $S_2$ above.

The recursions \eqref{eq:G1+1}--\eqref{eq:G2+1} are proved similarly to \eqref{eq:Gw+1}.

The identity \eqref{eq:Gx+1w} can be proved similarly by
the following $q$-binomial identity:
\begin{align}\label{eq:ident+}
\qbinom{m}{t}=q^{t}\qbinom{m-1}{t}_q+q^{-m+t}\qbinom{m-1}{t-1}_q.
\end{align}
Finally, \eqref{eq:Gk}-\eqref{eq:Godd} can be easily verified directly. 
\end{proof}

\subsection{Specializations of the function $G$}

For $p_1,p_2\in \bbZ$ and $u \in \bbZ_{\ge 0}$, we define
\begin{align}\label{Hup1p2}
H(u; p_1,p_2):=\sum_{\begin{array}{c}c,e\geq0\\c+e=u\end{array}}q^{2c+2cp_1+2ep_2}\qbinom{p_1}{c}_{q^2}\qbinom{p_2}{e}_{q^2},\\ \notag
\end{align}
Note that $H(u;p_1,p_2)= G(0, u, 0;0, p_1, p_2)$, a specialization of $G$ defined in \eqref{eq:Twxyu}.

\begin{lem}\label{lem:Hup1p2}
For any $p_1,p_2\in \bbZ$ and $u \in \bbZ_{> 0}$, we have

\begin{align}
H(u; p_2,p_1+1) &=q^{2u}(H(u; p_1,p_2)+H(u-1; p_1,p_2)); \label{eq:Hp2p1+1} \\
H(u;p_1+1,p_2)  &=H(u; p_1,p_2)+q^{4(p_1+1)}H(u-1; p_1,p_2); \label{eq:Hp1+1} \\
H(u; p_1,p_2+1) &=H(u; p_1,p_2)+q^{4p_2+2}H(u-1; p_1,p_2). \label{eq:Hp2+1}
\end{align}
\end{lem}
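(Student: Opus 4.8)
The plan is to handle the three recursions \eqref{eq:Hp2p1+1}--\eqref{eq:Hp2+1} separately, obtaining the last two as immediate specializations of the $G$-recursions already established in Lemma~\ref{lem:Gwuxp0p1p2}, and proving the first by a direct Pascal computation on the explicit double sum \eqref{Hup1p2}.

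First, for \eqref{eq:Hp1+1} and \eqref{eq:Hp2+1} I would simply specialize the recursions \eqref{eq:G1+1} and \eqref{eq:G2+1} at $(w,\ell,p_0)=(0,0,0)$. Using the identification $H(u;p_1,p_2)=G(0,u,0;0,p_1,p_2)$ recorded right after \eqref{Hup1p2}, relation \eqref{eq:G1+1} reads $H(u;p_1+1,p_2)=H(u;p_1,p_2)+q^{4p_1+4}H(u-1;p_1,p_2)$, which is exactly \eqref{eq:Hp1+1} since $4p_1+4=4(p_1+1)$; likewise \eqref{eq:G2+1} specializes verbatim to \eqref{eq:Hp2+1}. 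No further computation is needed for these two.

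Second, for \eqref{eq:Hp2p1+1} I would argue directly. Relabelling the summation indices $c\leftrightarrow e$ in \eqref{Hup1p2} rewrites the left-hand side as
\[
H(u;p_2,p_1+1)=\sum_{c+e=u}q^{2e+2ep_2+2c(p_1+1)}\qbinom{p_1+1}{c}_{q^2}\qbinom{p_2}{e}_{q^2}.
\]
I would then apply the $q^2$-version of Pascal's identity \eqref{eq:ident+}, namely $\qbinom{p_1+1}{c}_{q^2}=q^{2c}\qbinom{p_1}{c}_{q^2}+q^{-2(p_1+1)+2c}\qbinom{p_1}{c-1}_{q^2}$, to split the sum into two parts. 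In the first part the exponent becomes $2e+2ep_2+2c(p_1+1)+2c=2u+(2c+2cp_1+2ep_2)$, so it equals exactly $q^{2u}H(u;p_1,p_2)$. In the second part, after the index shift $c\mapsto c+1$ (so that now $c+e=u-1$), the $q^{-2(p_1+1)}$ factor combines with the shift and the surviving $q$-power regroups into $q^{2u}$ times the summand of $H(u-1;p_1,p_2)$, giving $q^{2u}H(u-1;p_1,p_2)$. Adding the two parts yields \eqref{eq:Hp2p1+1}.

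The main obstacle is \eqref{eq:Hp2p1+1}, where the two parameters are simultaneously swapped and shifted by $p_1\mapsto p_1+1$, so the common factor $q^{2u}$ on the right is not visible term by term. The crux is to use the correct Pascal identity: applying \eqref{eq:ident+} rather than \eqref{eq:ident-} produces the factor $q^{2c}$ that, together with the $q^{2e}$ left over from the relabelling, rebuilds the uniform power $q^{2u}=q^{2c+2e}$ multiplying the summands of $H(u;p_1,p_2)$ and $H(u-1;p_1,p_2)$; the other split would instead leave a spurious factor $q^{-2c}$ and fail to reproduce $H$. (Alternatively, \eqref{eq:Hp2p1+1} follows by combining \eqref{eq:Gw+1} at $(w,\ell,p_0)=(0,0,0)$, which gives $G(1,u,0;0,p_1,p_2)=q^{-2u}H(u;p_2,p_1+1)-H(u-1;p_1,p_2)$, with the evaluation $G(1,u,0;0,p_1,p_2)=H(u;p_1,p_2)$ obtained from the same Pascal computation.)
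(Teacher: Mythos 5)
Your proof is correct and follows essentially the same route as the paper: the key identity \eqref{eq:Hp2p1+1} is obtained exactly as the paper intends, by applying the $q^2$-version of \eqref{eq:ident+} to $\qbinom{p_1+1}{c}_{q^2}$ after swapping the summation indices (the exponent bookkeeping producing the uniform factor $q^{2u}$ checks out), and your derivation of \eqref{eq:Hp1+1}--\eqref{eq:Hp2+1} by specializing \eqref{eq:G1+1}--\eqref{eq:G2+1} at $(w,\ell,p_0)=(0,0,0)$ is a legitimate, non-circular shortcut since Lemma~\ref{lem:Gwuxp0p1p2} precedes this lemma and rests only on the Pascal identities. The paper instead applies \eqref{eq:ident-} directly to the double sum \eqref{Hup1p2} for the last two relations, but this is only a difference in packaging, not in the underlying argument.
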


\begin{proof}
Note that \eqref{eq:Hp2p1+1} follows from \eqref{eq:ident+}, and \eqref{eq:Hp1+1}--\eqref{eq:Hp2+1} follow from \eqref{eq:ident-}.
\end{proof}

Define
\begin{align}
G_0(w,u;p_0,p_1,p_2) &:=G(w,u,0;p_0,p_1,p_2);
  \label{eq:G0}  \\
 G_{00}(w,u;p_1,p_2) &:=G(w,u,0;0,p_1,p_2).  \label{eq:G00}
\end{align}

Observe by definition that $H(u;p_1,p_2)= G_{00}(0, u; p_1, p_2)$.

\begin{prop}\label{prop:G=H}
 For any $w,p_1,p_2\in \bbZ$ and $u \in \bbZ_{\ge 0}$, $G_{00}(w,u;p_1,p_2)$ is independent of $w$; that is,
$
G_{00}(w,u;p_1,p_2) =H(u;p_1,p_2).
$
\end{prop}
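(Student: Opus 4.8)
The plan is to prove Proposition~\ref{prop:G=H} by induction on $w$, using the recursion \eqref{eq:Gw+1} specialized to $\ell=0$. When $\ell=0$, the only allowed value of $t$ in the definition \eqref{eq:Twxyu} is $t=0$, and the $q$-binomial $\qbinom{0}{0}_q=1$ drops out, so $G_{00}$ is governed by recursions that match those for $H$. First I would record the base case: by the observation immediately preceding the proposition, $G_{00}(0,u;p_1,p_2)=H(u;p_1,p_2)$, which handles $w=0$. It then suffices to show that $G_{00}(w,u;p_1,p_2)$ is independent of $w$, and for this I would prove that $G_{00}(w+1,u;p_1,p_2)=G_{00}(w,u;p_1,p_2)$ for all $w$ (in both directions, so that negative $w$ are covered as well).

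The key step is to specialize \eqref{eq:Gw+1} at $\ell=0$, which reads
\begin{equation*}
G_{00}(w+1,u;p_1,p_2)=q^{-2u}G_{00}(w,u;p_2,p_1+1)-q^{2p_0+0}G_{00}(w,u-1;p_1,p_2),
\end{equation*}
where the $p_0$-dependence enters only through the second term; since $G_{00}$ fixes $p_0=0$ by \eqref{eq:G00}, I would first need to confirm that the right-hand side, when assembled from $G_0$ with $p_0=0$, is consistent — that is, that the argument $p_0$ propagated by \eqref{eq:Gw+1} does not force a nonzero $p_0$. The cleanest route is to work with the full recursion \eqref{eq:Gw+1} together with \eqref{eq:Godd} at $\ell=0$; comparing the two gives a relation among the specialized values that eliminates the $p_0$ shift. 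Then I would set up a double induction on $u$ and on $w$: assuming the statement for $u-1$ (all $w$) and for $u$ at the value $w$, I would use the $\ell=0$ version of \eqref{eq:Gw+1} to express $G_{00}(w+1,u;p_1,p_2)$ in terms of $H(u;p_2,p_1+1)$ and $H(u-1;p_1,p_2)$, and then invoke the $H$-recursion \eqref{eq:Hp2p1+1} from Lemma~\ref{lem:Hup1p2}, namely $H(u;p_2,p_1+1)=q^{2u}(H(u;p_1,p_2)+H(u-1;p_1,p_2))$, to collapse the right-hand side back to $H(u;p_1,p_2)$. The powers of $q$ should cancel: the factor $q^{-2u}$ from the recursion meets the $q^{2u}$ produced by \eqref{eq:Hp2p1+1}, and the two occurrences of $H(u-1;p_1,p_2)$ are arranged to cancel against the subtracted term.

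Concretely, substituting the inductive hypotheses $G_{00}(w,u;p_2,p_1+1)=H(u;p_2,p_1+1)$ and $G_{00}(w,u-1;p_1,p_2)=H(u-1;p_1,p_2)$ into the specialized \eqref{eq:Gw+1} yields
\begin{equation*}
G_{00}(w+1,u;p_1,p_2)=q^{-2u}H(u;p_2,p_1+1)-H(u-1;p_1,p_2),
\end{equation*}
and then \eqref{eq:Hp2p1+1} turns the first term into $H(u;p_1,p_2)+H(u-1;p_1,p_2)$, so that the $H(u-1;p_1,p_2)$ terms cancel and we are left with exactly $H(u;p_1,p_2)=G_{00}(w,u;p_1,p_2)$, closing the induction. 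The same computation run with \eqref{eq:ident+} in place of \eqref{eq:ident-}, or equivalently reading the recursion backward, would handle the step from $w+1$ to $w$ and thus the $w<0$ range.

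The main obstacle I anticipate is bookkeeping the $p_0$ argument correctly. The recursion \eqref{eq:Gw+1} does not keep $p_0$ fixed at $0$ in general, so one must be careful that restricting to $G_{00}$ (with $p_0=0$) is compatible with the recursion — this is presumably why the authors isolate $G_0$ and $G_{00}$ as separate specializations in \eqref{eq:G0}--\eqref{eq:G00}. The safe way is to verify at the outset that for $\ell=0$ the $p_0$-dependence of $G_0(w,u;p_0,p_1,p_2)$ is trivial, i.e. $G_0$ does not actually depend on $p_0$ when $\ell=0$ (since the first $q$-binomial $\qbinom{w+t+p_0}{r}_q$ is the only place $p_0$ appears, and with $t=0$ it can be reconciled using \eqref{eq:Gk}), so that the second term on the right-hand side of the specialized \eqref{eq:Gw+1} may legitimately be read as $G_{00}(w,u-1;p_1,p_2)$. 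Once that compatibility is in hand, the induction is routine and the $q$-power cancellations are exactly those supplied by Lemma~\ref{lem:Hup1p2}.
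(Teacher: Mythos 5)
Your main induction coincides with the paper's proof: base case $w=0$ by definition, then the $\ell=0$, $p_0=0$ specialization of \eqref{eq:Gw+1} combined with \eqref{eq:Hp2p1+1} to pass from $w$ to $w+1$ (a single induction on $w$, with the statement quantified over all $u,p_1,p_2$, already suffices; no secondary induction on $u$ is needed in the forward direction). Two caveats. First, your worry about the $p_0$ argument is a non-issue, and your proposed ``safe'' verification is of a false statement: the recursion \eqref{eq:Gw+1} carries the \emph{same} $p_0$ in all three terms, so one simply sets $p_0=0$ and $\ell=0$ throughout, which makes the coefficient $q^{2p_0+\ell}$ equal to $1$ and keeps everything inside $G_{00}$. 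On the other hand, $G_0(w,u;p_0,p_1,p_2)$ is genuinely $p_0$-dependent (already for $u=1$ the summand with $r=1$, $c=e=0$ contributes $q^{p_0}\qbinom{w+p_0}{1}_q$ by \eqref{eq:Twxyu}), and \eqref{eq:Gk} shifts $w,p_1,p_2$ along with $p_0$, so it cannot be used to remove $p_0$ alone; do not attempt that check. Second, for $w<0$ the paper does not run the recursion backward: it notes that both sides of the claimed identity are rational functions of $q$ and $q^w$, so an identity valid for all $w\ge 0$ holds formally and hence for every $w\in\bbZ$. Your backward route can be made to work but needs more structure than you indicate, since solving \eqref{eq:Gw+1} for $G_{00}(w,u;p_2,p_1+1)$ still leaves the term $G_{00}(w,u-1;p_1,p_2)$ at the \emph{same} value of $w$; one therefore needs a genuine secondary induction on $u$ there, with base case $u=0$ (where $G_{00}(w,0;p_1,p_2)=1$ for all $w$). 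The paper's specialization argument avoids this bookkeeping.
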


\begin{proof}
First assume $w\ge 0$. We prove the identity by induction on $w$. For $w=0$, the identity follows by definitions.
Using \eqref{eq:Gw+1}, \eqref{eq:Hp2p1+1} and the induction hypothesis, we have by definition of $G_{00}$ in \eqref{eq:G00} that
\begin{align*}
G_{00}(w+1,u;p_1,p_2 )
    & =q^{-2u}G_{00}(w,u;p_2,p_1+1)-G_{00}(w,u-1;p_1,p_2)\\
    & =q^{-2u}H(u;p_2,p_1+1)-H(u-1;p_1,p_2)\\
    & =H(u;p_1,p_2).
\end{align*}
Viewing $p_1, p_2, u$ as fixed, we regard the identity in the proposition as an identity involving rational functions in 2 variables $q, q^w$. Since this identity holds for all $w\ge 0$, it must hold as a formal identity in the 2 variables, and hence as an identity in $q$, for arbitrary $w\in\Z$.
\end{proof}

The following corollary is immediate by setting $p_2=0$ in Proposition~\ref{prop:G=H}. Recall the definition of $G_{00}$ in \eqref{eq:G00}.

\begin{cor}
The identity $G_{00}(w,u;p_1,0)=H(u;p_1,0)$ holds; that is,
\begin{align}
   \label{eq:GWup10}
& (-1)^wq^{u^2-wu}
 \left\{ \sum_{\substack{c+e+r=u\\ 2\mid(w-r)}}
q^{-u(c+e)+2c+2cp_1}\qbinom{w}{r}\qbinom{\frac{w-r}{2}+p_1}{c}_{q^2}
\qbinom{\frac{w-r}{2}}{e}_{q^2} \right.
  \\
  &- \left.  \sum_{\substack{c+e+r=u\\ 2\nmid(w-r)}}
 q^{-(u-1)(c+e)+2cp_1}\qbinom{w}{r}\qbinom{1+\frac{w-r-1}{2}+p_1}{c}_{q^2}
\qbinom{\frac{w-r-1}{2}}{e}_{q^2} \right\}
 =  q^{2u+2up_1} & \qbinom{p_1}{u}_{q^2}.\notag
\end{align}
\end{cor}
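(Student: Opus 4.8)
The plan is to deduce the corollary directly from Proposition~\ref{prop:G=H} by the single specialization $p_2=0$, and then to verify that both sides of \eqref{eq:GWup10} are exactly what $G_{00}(w,u;p_1,0)$ and $H(u;p_1,0)$ unwind to. First I would set $p_2 = 0$ in the identity $G_{00}(w,u;p_1,p_2)=H(u;p_1,p_2)$ furnished by Proposition~\ref{prop:G=H}, giving $G_{00}(w,u;p_1,0)=H(u;p_1,0)$ as an abstract equality. The real content of the corollary is then purely notational: I must expand each side according to its definition and check that the displayed formula \eqref{eq:GWup10} is literally the resulting equation, with no further manipulation required.

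For the left-hand side I would start from the definition \eqref{eq:G00} of $G_{00}$, namely $G_{00}(w,u;p_1,p_2)=G(w,u,0;0,p_1,p_2)$, and substitute $\ell=0$, $p_0=0$, $p_2=0$ into the big expression \eqref{eq:Twxyu} for $G$. With $\ell=0$ the outer $t$-sum collapses to the single term $t=0$, so the binomial $\qbinom{\ell}{t}_q$ becomes $1$ and the exponent contribution $-t(\ell+u-1)$ disappears; the parity conditions $2\mid(t+w-r)$ and $2\nmid(t+w-r)$ reduce to $2\mid(w-r)$ and $2\nmid(w-r)$ respectively. The prefactor $(-1)^w q^{u^2-wu+\ell u}$ becomes $(-1)^w q^{u^2-wu}$. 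Reading off the two inner sums with $p_2=0$ (so that $2ep_2=0$, and the last $q^2$-binomials become $\qbinom{\frac{w-r}{2}}{e}_{q^2}$ and $\qbinom{\frac{w-r-1}{2}}{e}_{q^2}$) should reproduce the braced expression in \eqref{eq:GWup10} term by term. I would simply track the exponents $-u(c+e)+2c+2cp_1$ in the even branch and $-(u-1)(c+e)+2cp_1$ in the odd branch and confirm they match.

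For the right-hand side I would expand $H(u;p_1,0)$ from its definition \eqref{Hup1p2}: the sum over $c+e=u$ of $q^{2c+2cp_1+2e\cdot 0}\qbinom{p_1}{c}_{q^2}\qbinom{0}{e}_{q^2}$. Since $\qbinom{0}{e}_{q^2}=0$ unless $e=0$, only the single term $e=0$, $c=u$ survives, collapsing the sum to $q^{2u+2up_1}\qbinom{p_1}{u}_{q^2}$, which is precisely the right-hand side of \eqref{eq:GWup10}. Assembling the expanded left-hand side, the abstract equality, and this collapsed right-hand side completes the proof.

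Since this is entirely a matter of specializing an already-proved identity and unwinding definitions, I anticipate no genuine obstacle; the only point demanding care is the bookkeeping of exponents and parity conditions when setting $\ell=0$, $p_0=0$, $p_2=0$, and the observation that $\qbinom{0}{e}_{q^2}$ kills all terms with $e>0$ on the $H$-side. The proof would therefore read simply: \emph{Setting $p_2=0$ in Proposition~\ref{prop:G=H} and unravelling the definitions \eqref{eq:G00}, \eqref{eq:Twxyu} (with $\ell=0$, $p_0=0$, $p_2=0$) on the left and \eqref{Hup1p2} (with $p_2=0$) on the right yields \eqref{eq:GWup10}.}
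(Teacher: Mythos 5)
Your proposal is correct and follows exactly the paper's route: the paper likewise obtains the corollary immediately by setting $p_2=0$ in Proposition~\ref{prop:G=H}, with the displayed formula being nothing more than the unwinding of $G_{00}(w,u;p_1,0)=G(w,u,0;0,p_1,0)$ and of $H(u;p_1,0)$ (where only the $e=0$, $c=u$ term survives because $\qbinom{0}{e}_{q^2}=0$ for $e>0$). Your extra bookkeeping of the $t=0$ collapse and the exponents is accurate but not needed beyond what the paper states.
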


Recall $T(w, u, \ell)$ from \eqref{eq:Twux}.
\begin{cor}
\label{cor:x=0}
We have $
T(w,u,0)=0,$ for any $w\in\bbZ, u\in\Z_{>0}$.
\end{cor}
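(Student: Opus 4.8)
The plan is to derive Corollary~\ref{cor:x=0} as the special case $\ell = 0$ of Theorem~\ref{thm:T=0}, using the explicit evaluation of $G_{00}$ already established in the preceding corollary. First I would recall the relation \eqref{eq:T=G} between $T$ and $G$, namely
\[
T(w,u,\ell) = (-1)^w q^{wu-u^2} G(w,u,\ell;-\ell,u-1,-\ell).
\]
Specializing to $\ell = 0$ and invoking the notation $G_0$ from \eqref{eq:G0}, this reads
\[
T(w,u,0) = (-1)^w q^{wu-u^2} G_0(w,u;0,u-1,0) = (-1)^w q^{wu-u^2} G_{00}(w,u;u-1,0),
\]
where the last equality uses that $p_0 = -\ell = 0$ so that $G_0$ reduces to $G_{00}$ as in \eqref{eq:G00}.

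Next I would apply the evaluation of $G_{00}(w,u;p_1,0)$ furnished by the corollary preceding this statement, that is $G_{00}(w,u;p_1,0) = H(u;p_1,0)$, with the specialization $p_1 = u-1$. This gives $G_{00}(w,u;u-1,0) = H(u;u-1,0)$, and so the problem reduces to computing the scalar $H(u;u-1,0)$. By the defining formula \eqref{Hup1p2},
\[
H(u;u-1,0) = \sum_{\substack{c,e\geq0 \\ c+e=u}} q^{2c+2c(u-1)} \qbinom{u-1}{c}_{q^2} \qbinom{0}{e}_{q^2}.
\]
The crucial arithmetic observation is that $\qbinom{0}{e}_{q^2}$ vanishes unless $e = 0$, forcing $c = u$ in the single surviving term; but then $\qbinom{u-1}{u}_{q^2} = 0$ as well, so the entire sum is zero. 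Hence $H(u;u-1,0) = 0$, and therefore $T(w,u,0) = 0$ for all $w\in\bbZ$ and $u\in\Z_{>0}$.

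I do not expect any genuine obstacle here, since all the substantive work has been done in establishing Proposition~\ref{prop:G=H} (the $w$-independence of $G_{00}$) and its corollary. The only point requiring care is the bookkeeping of the specialization $p_0 = p_2 = -\ell = 0$ and $p_1 = u-1$ in passing from $G$ to $G_{00}$, together with the clean vanishing of the $q^2$-binomial coefficients at the boundary. One could alternatively read off the vanishing directly from the right-hand side $q^{2u+2up_1}\qbinom{p_1}{u}_{q^2}$ of \eqref{eq:GWup10}: with $p_1 = u-1 < u$ one has $\qbinom{u-1}{u}_{q^2} = 0$, which immediately yields $G_{00}(w,u;u-1,0) = 0$ and hence $T(w,u,0) = 0$. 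This is the shortest route and is the form I would write up.
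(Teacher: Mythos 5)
Your proposal is correct and follows essentially the same route as the paper: specialize \eqref{eq:T=G} at $\ell=0$ to get $T(w,u,0)=(-1)^wq^{wu-u^2}G_{00}(w,u;u-1,0)$, then invoke the identity $G_{00}(w,u;p_1,0)=H(u;p_1,0)=q^{2u+2up_1}\qbinom{p_1}{u}_{q^2}$ at $p_1=u-1$, where $\qbinom{u-1}{u}_{q^2}=0$. Your "alternative shortest route" at the end is precisely the paper's two-line proof, and your direct expansion of $H(u;u-1,0)$ is a correct (if slightly longer) verification of the same vanishing.
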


\begin{proof}
It follow by the identity \eqref{eq:T=G} that
\begin{equation*}
T(w,u,0) =(-1)^wq^{wu-u^2}G_{00}(w,u;u-1,0) = (-1)^wq^{wu +u^2} \qbinom{u-1}{u}_{q^2}=0,
\end{equation*}
where the second equality above uses  \eqref{eq:GWup10}.
\end{proof}

\subsection{A multi-variable identity}

We now prove the main result of this section.
\begin{thm}
  \label{prop:Gwuxp0p1p2p3}
The following identity holds, for any $w, p_0, p_1, p_2\in \bbZ$, $\ell  \in \bbZ_{>0}$ and $u\in \bbZ_{\geq 0}$:
\begin{align*}
G(w,u,\ell ; p_0,p_1,p_2)=0.
\end{align*}
\end{thm}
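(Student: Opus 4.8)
The plan is to prove the identity by induction on $\ell$, after first isolating the one genuinely nontrivial input: that the $\ell=0$ specialization $G_0(w,u;p_0,p_1,p_2)=G(w,u,0;p_0,p_1,p_2)$ is \emph{independent of $w$}. Proposition~\ref{prop:G=H} already gives exactly this when $p_0=0$, in the sharp form $G_{00}(w,u;p_1,p_2)=H(u;p_1,p_2)$, so the work is to propagate $w$-independence to arbitrary $p_0$.

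First I would establish the $w$-independence of $G_0$ by reducing the value of $p_0$ modulo $2$ using the scaling recursion \eqref{eq:Gk} at $\ell=0$. For even $p_0=2k$, taking that same $k$ in \eqref{eq:Gk} rewrites $G_0(w,u;2k,p_1,p_2)$ as $q^{4ku}G_{00}(w+2k,u;p_1-k,p_2-k)$, which is $w$-independent by Proposition~\ref{prop:G=H}. For the value $p_0=1$ I would invoke the parity-shifting recursion \eqref{eq:Godd} at $\ell=0$, namely $G_0(w+1,u;0,p_1,p_2)=q^{-2u}G_0(w,u;1,p_2,p_1+1)$; solving for the right-hand side and applying Proposition~\ref{prop:G=H} yields $G_0(w,u;1,p_2,p_1+1)=q^{2u}H(u;p_1,p_2)$, which is manifestly $w$-independent, and every triple with $p_0=1$ has this shape (set $p_2=a$, $p_1=b-1$). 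Finally, an odd $p_0=2k+1$ reduces to the case $p_0=1$ via \eqref{eq:Gk} once more. Together these cases show $G_0(w,u;p_0,p_1,p_2)$ does not depend on $w$.

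With this in hand, the base case $\ell=1$ is immediate: specializing \eqref{eq:Gx+1w} at $\ell=0$ gives $G(w,u,1;p_0,p_1,p_2)=q^u\big(G_0(w,u;p_0,p_1,p_2)-G_0(w+1,u;p_0,p_1,p_2)\big)$, and the two terms cancel by the $w$-independence of $G_0$. For the inductive step, I would assume $G(w,u,\ell;\,\cdot\,)=0$ for all values of $w,u,p_0,p_1,p_2$ at a fixed $\ell\geq 1$. Then \eqref{eq:Gx+1w} expresses $G(w,u,\ell+1;p_0,p_1,p_2)$ as a linear combination of $G(w,u,\ell;p_0,p_1,p_2)$ and $G(w+1,u,\ell;p_0,p_1,p_2)$, both of which vanish by the inductive hypothesis. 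Hence $G(w,u,\ell+1;\,\cdot\,)=0$, and the induction covers all $\ell>0$.

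I expect the main obstacle to be precisely the $w$-independence of $G_0$ for \emph{odd} $p_0$: Proposition~\ref{prop:G=H} only supplies $p_0=0$, and the representative $p_0=1$ is unreachable from $p_0=0$ by the even shifts in \eqref{eq:Gk}. The resolution is the recursion \eqref{eq:Godd}, whose role is exactly to link the odd residue class back to the $p_0=0$ computation already carried out. Once $G_0$ is known to be $w$-independent, the remainder is a clean two-line induction driven entirely by \eqref{eq:Gx+1w}, requiring no further $q$-binomial manipulation.
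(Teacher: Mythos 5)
Your proposal is correct and follows essentially the same route as the paper's proof: reduce to $\ell=1$ via the recursion \eqref{eq:Gx+1w}, split on the parity of $p_0$, and use \eqref{eq:Gk}, \eqref{eq:Godd} to fall back on the $w$-independence of $G_{00}$ supplied by Proposition~\ref{prop:G=H}. The only difference is organizational — you isolate the $w$-independence of $G_0$ as a standalone lemma before running the induction, whereas the paper performs the same reductions inline on the difference $G_0(w,u;\cdot)-G_0(w+1,u;\cdot)$.
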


\begin{proof}
By the recursion \eqref{eq:Gx+1w} on $\ell$, it suffices to prove the desired identity at $\ell = 1$.
The proof is divided into the following two cases. Recall the definitions of $G_{0}$ in \eqref{eq:G0} and $G_{00}$ in \eqref{eq:G00}.

\vspace{2mm}
\noindent{\underline{Case I: $p_0$ is even}.} From the recursive relations \eqref{eq:Gx+1w}--\eqref{eq:Godd} and   Proposition~ \ref{prop:G=H} we have:
\begin{align*}
G & (w,u,1; p_0,p_1,p_2) \\
&= q^uG_0(w, u; p_0,p_1,p_2) -q^{u}G_0(w+1,u; p_0,p_1,p_2)\\
&= q^{u+2p_0u} G_{00}(w+p_0, u ;p_1-\frac{p_0}{2}, p_2-\frac{p_0}{2})
\\
&\qquad -q^{u+2p_0u}G_{00}(w+p_0+1, u; p_1-\frac{p_0}{2},p_2-\frac{p_0}{2})\\
&= q^{u+2p_0u}H(u;p_1-\frac{p_0}{2},p_2-\frac{p_0}{2})-q^{u+2p_0u}H(u;p_1-\frac{p_0}{2},p_2-\frac{p_0}{2})
                     =0.
\end{align*}

\noindent{\underline{Case II: $p_0$ is odd}.} Similarly, we have
\begin{align*}
G & (w,u,1;p_0,p_1,p_2) \\
  &= q^uG_0(w,u;p_0, p_1, p_2)-q^{u}G_0(w+1,u;p_0, p_1, p_2)\\
  &= q^{u+2(p_0-1)u} G_{0}(w+p_0-1, u; 1, p_1-\frac{p_0-1}{2}, p_2-\frac{p_0-1}{2})\\
  &\qquad -q^{u+2(p_0-1)u} G_{0}(w+p_0, u; 1, p_1-\frac{p_0-1}{2},p_2-\frac{p_0-1}{2}) \\
  &=q^{2p_0u+u}G_{00}(w+p_0, u; p_2-\frac{p_0+1}{2},p_1-\frac{p_0-1}{2})\\
  &\qquad -q^{2p_0u+u}G_{00}(w+p_0+1, u; p_2-\frac{p_0+1}{2},p_1-\frac{p_0-1}{2})\\
 &=q^{2p_0u+u}(H(u;p_2-\frac{p_0+1}{2},p_1-\frac{p_0-1}{2})-H(u;p_2-\frac{p_0+1}{2},p_1-\frac{p_0-1}{2}))
  =0.
\end{align*}
The theorem is proved.
\end{proof}

\subsection{Proof of Theorem \ref{thm:T=0}}

Let $w\in \Z$, $u\in \N$. It follows from the identity \eqref{eq:T=G} and Theorem ~\ref{prop:Gwuxp0p1p2p3} that
\[
T(w,u,\ell )=(-1)^wq^{wu-u^2}G(w,u,\ell ;-\ell ,u-1,-\ell )=0, \quad \forall \ell \in\bbZ_{>0}.
\]
Together with $T(w, u, 0)=0$ (for $u>0$) from Corollary~\ref{cor:x=0}, this proves Theorem \ref{thm:T=0}.
\qed

\appendix

\section{More reductions}  
  \label{sec:appendix}

In this appendix, we provide details on the proofs of the identities \eqref{eq:serre11odd}--\eqref{eq:serre11evenodd}, which are modeled on the proof of \eqref{eq:serre11F}.

\subsection{Proof of the identity \eqref{eq:serre11odd}}

Recall $a_{12}=-2m$. Thanks to Remark~\ref{rem:u=0}, in order to prove the identity \eqref{eq:serre11odd}, it suffices to prove
\begin{align}\label{eqn:QSP11oddF}
\sum_{n=0}^{2m+1} (-1)^n  B^{(n)}_{1,\odd}F_2 B^{(2m+1-n)}_{1,\odd} \onestar_{2\la-1}=0,
\qquad \forall \la \in \Z.
\end{align}

Similar to \eqref{eq:evev}, we can show that
\begin{align}  \label{eq:F22}
&\small \sum_{n=0}^{2m+1} (-1)^n  B^{(n)}_{1,\odd}F_2 B^{(2m+1-n)}_{1,\odd}\onestar_{2\la-1} \\ \notag
&\small = \Big\{\sum^{2m}_{n=0,2\mid n}\sum_{c=0}^{m-\frac{n}{2}}\sum_{e=0}^{\frac{n}{2}} \sum_{a=0}^{2m+1-n-2c}\sum_{d=0}^{n-2e}\sum^{\min\{a,n-2e-d\}}_{r=0}q_1^{(a+c+d+e)(2m+2-n-2\la-2a-2c-2d-2e)-c+d+e}  \\ \notag
&\small \quad \cdot \qbinom{a+d-r}{d}_{q_1}\qbinom{2m+3-n-2\la-2e-d-3a-4c}{r}_{q_1} \qbinom{m-\frac{n}{2}-\la-c-a+1}{c}_{q_1^2}
\\ \notag
&\small \quad \cdot \qbinom{m+1-\frac{n}{2}-\la-e-d-2a-2c}{e}_{q_1^2} \\ \notag
&\small \quad - \sum^{2m+1}_{n=1,2\nmid n}\sum_{c=0}^{m+\frac{1-n}{2}}\sum_{e=0}^{\frac{n-1}{2}} \sum_{a=0}^{2m+1-n-2c}\sum_{d=0}^{n-2e}\sum^{\min\{a,n-2e-d\}}_{r=0}q_1^{(a+c+d+e)(2m+2-n-2\la-2a-2c-2d-2e)+d} \\ \notag
&\small \quad \cdot \qbinom{a+d-r}{d}_{q_1}\qbinom{2m+3-n-2\la-2e-d-3a-4c}{r}_{q_1} \qbinom{m+\frac{1-n}{2}-\la-c-a}{c}_{q_1^2}
\\ \notag
&\small \quad \cdot \qbinom{m+\frac{3-n}{2}-\la-e-d-2a-2c}{e}_{q_1^2} \Big\} E_1^{(a+d-r)}F_1^{(n-2e-d-r)}F_2F_1^{(2m+1-n-2c-a)}\onestar_{2\la-1}. \notag
\end{align}

We introduce variables $\ell =a+d-r$, and $y =n-2e-d-r$. As $n+2c+a-\ell -y$ is even, set $n+2c+a-\ell -y =2u$ for $u\in\Z$. Then we have
$a=\ell +y+2u-2c-n$, $d=n+c-e-u-y$ and $u=e+c+r$. Observe the monomials on the right-hand side of the equation \eqref{eq:F22} above are of the form $E_1^{(\ell)}F_1^{(y)}F_2F^{(2m+1-\ell-y-2u)}_1\onestar_{2\la}$, for $\ell, y, u \in \N$.

If $u=0$ and $\ell =0$, then $e=c=r=a=d=0$, collecting the corresponding monomials in \eqref{eq:F22} gives us $\sum^{2m+1}_{n=0}(-1)^nF^{(n)}_1F_2F^{(2m+1-n)}_1\onestar_{2\la}=0$, by the $q$-Serre relation \eqref{eq:serre4}.

If $u$ and $\ell$ are not both $0$, the monomial $E_1^{(\ell )}F_1^{(y)}F_2F^{(2m+1-\ell -y-2u)}_1\onestar_{2\la-1}$ has coefficient given by $q_1^{(\ell +u)(2m+2-2\la-2\ell -3u-y)} S'(y,u,\ell, \la)$, where
\begin{align}\label{eq:T'(x,y,u)}
\displaybreak
S' & (y,u,\ell, \la) =\sum^{2m}_{n=0,2\mid n}\sum_{\substack{c,e,r\geq0 \\ c+e+r=u}}
q_1^{(u+y-n)(\ell +u-1)} \\
& \cdot \qbinom{\ell }{-u-y-e+c+n}_{q_1}
 \qbinom{2m+3-2\la-5u-3\ell -2y-e+c+n}{r}_{q_1} \notag \\
& \cdot \qbinom{m-\la-2u-\ell -y+c+1+\frac{n}{2}}{c}_{q_1^2}
 \qbinom{m+1-\la-2\ell -y-3u+\frac{n}{2}+c}{e}_{q_1^2}
 \notag \\
&- \sum^{2m+1}_{n=1,2\nmid n} \sum_{\substack{c,e,r\geq0 \\ c+e+r=u}}
q_1^{(u+y-n)(\ell +u-1)+c-e} \notag \\
&\cdot \qbinom{\ell }{-u-y-e+c+n}_{q_1}
\qbinom{2m+3-2\la-5u-3\ell -2y-e+c+n}{r}_{q_1} \notag \\
&\cdot  \qbinom{m-\la-2u-\ell -y+c+\frac{n+1}{2}}{c}_{q_1^2}
\qbinom{m+1-\la-2\ell -y-3u+\frac{n+1}{2}+c}{e}_{q_1^2} . \notag
\end{align}

Using new variables $t=-u-y-e+c+n$ and $w=2m+3-2\la-2\ell -4u-y$, we rewrite $S'(y,u,\ell, \la) =T'(w,u, \ell)$, where
\begin{align*}
T'(w,u,\ell )
&=  \sum_{\substack{c,e,r\geq0 \\ c+e+r=u}}
 \sum^{\ell }_{\substack{t=0\\ 2\nmid(t+w-r)}}
 q_1^{-t(\ell +u-1)-(\ell +u-1)(e-c)}\\ \notag
&\qquad\cdot \qbinom{\ell }{t}_{q_1}\qbinom{w+t-\ell }{r}_{q_1} \qbinom{\frac{w+t-r-1}{2}+u}{c}_{q_1^2}\qbinom{\frac{w+t-r-1}{2}-\ell }{e}_{q_1^2}\\  \notag
&\quad -\sum_{\substack{c,e,r\geq0 \\ c+e+r=u}}
\sum^{\ell }_{\substack{t=0\\ 2\mid(t+w-r)}}
q_1^{-t(\ell +u-1)-(\ell +u)(e-c)}\\ \notag
&\qquad \cdot \qbinom{\ell }{t}_{q_1}  \qbinom{w+t-\ell }{r}_{q_1}  \qbinom{-1+\frac{w+t-r}{2}+u}{c}_{q_1^2}
\qbinom{\frac{w+t-r}{2}-\ell }{e}_{q_1^2}.
\notag
\end{align*}
We observe that $T'(w,u,\ell ) =-T(w,u,\ell )|_{q\mapsto q_1}$  as defined in \eqref{eq:Twux}.
Therefore, \eqref{eqn:QSP11oddF} follows from Theorem \ref{thm:T=0}. The identity \eqref{eq:serre11odd} is proved.

\subsection{Proof of the identity \eqref{eq:serre11oddeven}}

Let $a_{12}=1-2m$. To prove the identity \eqref{eq:serre11oddeven}, it suffices to prove that
\begin{align}\label{eqn: QSP11oddevenF}
\sum_{n=0}^{2m} (-1)^n  B^{(n)}_{1,\odd}F_2 B^{(2m-n)}_{1,\ev} \onestar_{2\la}=0,
\qquad \forall \la \in \Z.
\end{align}

Similar to \eqref{eq:evev}, by computations we obtain
\begin{eqnarray}
\label{eq:F10}
 &&\small \sum_{n=0}^{2m} (-1)^n  B^{(n)}_{1,\odd}F_2 B^{(2m-n)}_{1,\ev} \onestar_{2\la} \\ \notag
& = & \Big\{ \small \sum^{2m}_{n=0,2\mid n}\sum_{c=0}^{m-\frac{n}{2}}\sum_{e=0}^{\frac{n}{2}} \sum_{a=0}^{2m-n-2c}\sum_{d=0}^{n-2e}\sum^{\min\{a,n-2e-d\}}_{r=0}q_1^{(a+c+d+e)(2m-n-2\la-2a-2c-2d-2e)-c+d+e}\\ \notag
&&\small \cdot \qbinom{a+d-r}{d}_{q_1}\qbinom{2m-2\la-3a-4c-d-2e-n+1}{r}_{q_1} \qbinom{m-\frac{n}{2}-c-a-\la}{c}_{q_1^2}
\\ \notag
&&\small  \cdot \qbinom{m-\la-e-d-2a-2c-\frac{n}{2}}{e}_{q_1^2}\\
&-&\small \sum^{2m}_{n=1,2\nmid n}\sum_{c=0}^{m-\frac{1+n}{2}}\sum_{e=0}^{\frac{n-1}{2}} \sum_{a=0}^{2m-n-2c}\sum_{d=0}^{n-2e}\sum^{\min\{a,n-2e-d\}}_{r=0}q_1^{(a+c+d+e)(2m-n-2\la-2a-2c-2d-2e)+d}\notag\\ \notag
&&\small \cdot \qbinom{a+d-r}{d}_{q_1}\qbinom{2m-2\la-3a-4c-d-2e-n+1}{r}_{q_1} \qbinom{m+\frac{1-n}{2}-c-a-\la-1}{c}_{q_1^2}
\\
&&\small \cdot \qbinom{m-\la-e-d-2a-2c-\frac{n+1}{2}+1}{e}_{q_1^2} \Big\}
E_1^{(a+d-r)}F_1^{(n-2e-d-r)}F_2F_1^{(2m-n-2c-a)}\onestar_{2\la}. \notag
\end{eqnarray}

Introduce new variables $\ell =a+d-r$, $y =n-2e-d-r$, and $2u =n+2c+a-\ell -y$. Then we have
$a=\ell +y+2u-2c-n$, $d=n+c-e-u-y$, and $r=u-e-c$. Observe the monomials on the right-hand side of the equation \eqref{eq:F10} above are of the form $E_1^{(\ell)}F_1^{(y)}F_2F^{(2m-\ell-y-2u)}_1\onestar_{2\la}$, for $\ell, y, u \in \N$.

For $u, \ell \in \N$, not both $0$, the monomial $E_1^{(\ell )}F_1^{(y)}F_2F^{(2m-\ell -y-2u)}_1\onestar_{2\la}$ has coefficient given by $q_1^{(\ell +u)(2m-2\la-2\ell -3u-y)} S''(y,u,\ell, \la)$, where
\begin{align}\label{eq:T1(x,y,u)}
S'' & (y,u,\ell, \la)
=\sum^{2m}_{n=0,2\mid n}\sum_{\substack{c,e,r\geq0 \\ c+e+r=u}}
q_1^{-(n+u+y)(\ell +u-1)}  \\
&\quad \cdot \qbinom{\ell }{-u-y-e+c+n}_{q_1}
 \qbinom{2m+1-2\la-5u-3\ell -2y-e+c+n}{u-e-c}_{q_1} \notag \\
& \quad \cdot \qbinom{m-\la-\ell -y-2u+c+\frac{n}{2}}{c}_{q_1^2}
 \qbinom{m-\la-2\ell -y-3u+\frac{n}{2}+c}{e}_{q_1^2}  \notag
\\
& \quad -\sum^{2m}_{n=1,2\nmid n}\sum_{\substack{c,e,r\geq0 \\ c+e+r=u}}
q_1^{-(n+u+y)(\ell +u-1)+c-e} \notag \\
& \quad \cdot \qbinom{\ell }{-u-y-e+c+n}_{q_1}
\qbinom{2m+1-2\la-5u-3\ell -2y-e+c+n}{u-e-c}_{q_1} \notag \\
& \quad\cdot \qbinom{m-\la-\ell -y-2u+c+\frac{n+1}{2}-1}{c}_{q_1^2}
 \qbinom{m-\la-2\ell -y-3u+\frac{n+1}{2}+c}{e}_{q_1^2}. \notag
\end{align}
Using new variables $t=-u-y-e+c+n$ and $w=2m+1-2\la-2\ell -4u-y$, we can show that $S''(y,u,\ell, \la) =-T(w,u,\ell ) |_{q\mapsto q_1}$  as defined in \eqref{eq:Twux}, and \eqref{eqn: QSP11oddevenF} follows. The identity \eqref{eq:serre11oddeven} is proved.

\subsection{Proof of the identity \eqref{eq:serre11evenodd}}

Let $a_{12}=1-2m$. To prove the identity \eqref{eq:serre11evenodd}, it suffices to prove that
\begin{align}\label{eqn: QSP11evenoddF}
\sum_{n=0}^{2m} (-1)^n  B^{(n)}_{1,\ev}F_2 B^{(2m-n)}_{1,\odd} \onestar_{2\la-1} =0,
\qquad \forall \la \in \Z.
\end{align}

Similar to \eqref{eq:evev}, by computations we can show
\begin{align}
  \label{eq:F11}
&\small \sum_{n=0}^{2m} (-1)^n  B^{(n)}_{1,\ev}F_2 B^{(2m-n)}_{1,\odd} \onestar_{2\la-1} \\
&= \notag
\small \Big\{\sum^{2m}_{n=0,2|n}\sum_{c=0}^{m-\frac{n}{2}}\sum_{e=0}^{\frac{n}{2}} \sum_{a=0}^{2m-n-2c}\sum_{d=0}^{n-2e}\sum^{\min\{a,n-2e-d\}}_{r=0}q_1^{(a+c+d+e)(2m+1-n-2\la-2a-2c-2d-2e)+d}\\  \notag
&\small  \cdot \qbinom{a+d-r}{d}_{q_1}\qbinom{2m+2-2\la-3a-4c-d-2e-n}{r}_{q_1} \qbinom{m-\frac{n}{2}-c-a-\la}{c}_{q_1^2}
\\  \notag
&\small \cdot \qbinom{m+1-\la-e-d-2a-2c-\frac{n}{2}}{e}_{q_1^2}
\\
&-\small \sum^{2m}_{n=0,2\nmid n}\sum_{c=0}^{m-\frac{n+1}{2}}\sum_{e=0}^{\frac{n-1}{2}} \sum_{a=0}^{2m-n-2c}\sum_{d=0}^{n-2e}\sum^{\min\{a,n-2e-d\}}_{r=0}q_1^{(a+c+d+e)(2m+1-n-2\la-2a-2c-2d-2e)+d+e-c}\notag\\ \notag
&\small \cdot \qbinom{a+d-r}{d}_{q_1}\qbinom{2m+2-2\la-3a-4c-d-2e-n}{r}_{q_1} \qbinom{m-\frac{n-1}{2}-c-a-\la}{c}_{q_1^2}
\\ \notag
&\small \cdot \qbinom{m-\la-e-d-2a-2c-\frac{n-1}{2}}{e}_{q_1^2} \Big \}E_1^{(a+d-r)}F_1^{(n-2e-d-r)}F_2F_1^{(2m-n-2c-a)}\onestar_{2\la-1}. \notag
\end{align}

We change variables $\ell =a+d-j$, $y =i-2e-d-j$, $2m-i-2c-a=2m-\ell -y-2u$. Then we have
$a=\ell +y+2u-2c-i$, $d=i+c-e-u-y$, $j=u-e-c$. Observe the monomials on the right-hand side of the equation \eqref{eq:F11} above are of the form $E_1^{(\ell)}F_1^{(y)}F_2F^{(2m-\ell-y-2u)}_1\onestar_{2\la}$, for $\ell, y, u \in \N$.
For $u, \ell \in \N$, not both $0$, the monomial $E_1^{(\ell )}F_1^{(y)}F_2F^{(2m-\ell -y-2u)}_1\onestar_{2\la-1}$ is $q_1^{(\ell +u)(2m+1-2\la-2\ell -3u-y)} S'''(y,u,\ell, \la)$, where
\begin{align}
 \label{eq:T'1(x,y,u)}
\small S''' & (y,u,\ell, \la)
\small =\sum^{2m}_{n=0,2\mid n}\sum_{\substack{c,e,r\geq0 \\ c+e+r=u}}
 q_1^{-(n+u+y)(\ell +u-1)+c-e} \\
 &\small \quad\cdot \qbinom{\ell }{-u-y-e+c+n}_{q_1}
 \qbinom{2m+2-2\la-5u-3\ell -2y-e+c+n}{r}_{q_1} \notag  \\
 &\small \quad \cdot \qbinom{m-\la-\ell -y-2u+c+\frac{n}{2}}{c}_{q_1^2}
 \qbinom{m+1-\la-2\ell -y-3u+\frac{n}{2}+c}{e}_{q_1^2} \notag \\
&\small \quad - \sum^{2m}_{n=1,2\nmid n}\sum_{\substack{c,e,r\geq0 \\ c+e+r=u}}
 q_1^{-(n+u+y)(\ell +u-1)}\qbinom{\ell }{-u-y-e+c+n}_{q_1} \notag
 \\
& \small \quad \cdot \qbinom{2m+2-2\la-5u-3\ell -2y-e+c+n}{r}_{q_1}
 \qbinom{m-\la-\ell -y-2u+c+\frac{n+1}{2}}{c}_{q_1^2}  \notag
\\
& \small \quad \cdot \qbinom{m-\la-2\ell -y-3u+\frac{n+1}{2}+c}{e}_{q_1^2}. \notag
\end{align}
It is easy to note that $S'''(y,u,\ell, \la)=q_1^{-(2u+2y)(\ell +u-1)}S(y,u,\ell, \la)$ (see \eqref{eq:T(x,y,u;w)}). Hence \eqref{eqn: QSP11evenoddF} follows from \eqref{eq:S=T} and Theorem \ref{thm:T=0}. The identity \eqref{eq:serre11evenodd} is proved.

Summarizing, in this appendix we have completed the proofs of the identities \eqref{eq:serre11odd}--\eqref{eq:serre11evenodd}.



\begin{thebibliography}{BW17a}\frenchspacing

\bibitem[BK15]{BK15} M.~Balagovic and S.~Kolb,
{\em The bar involution for quantum symmetric pairs}, Represent. Theory {\bf 19} (2015), 186--210.

\bibitem[BK19]{BK19}
M.~Balagovic and S.~Kolb,
{\em Universal $K$-matrix for quantum symmetric pairs},  J. Reine Angew. Math. {\bf 747} (2019), 299--353,
\href{http://arxiv.org/abs/1507.06276}{arXiv:1507.06276}v2.

\bibitem[BW18a]{BW13}
H. Bao and W. Wang,
{\em A new approach to Kazhdan-Lusztig theory of type $B$ via quantum symmetric pairs},  Ast\'erisque {\bf 402}, 2018, vii+134pp, \href{http://arxiv.org/abs/1310.0103}{arXiv:1310.0103}v2.

\bibitem[BW18b]{BW18b} H. Bao and W. Wang,
{\em Canonical bases arising from quantum symmetric pairs}, Invent. Math. {\bf 213} (2018), 1099--1177.

\bibitem[BW18c]{BW18c} H. Bao and W. Wang,
{\em Canonical bases arising from quantum symmetric pairs of Kac-Moody type}, \href{http://arxiv.org/abs/1811.09848}{arXiv:1811.09848}.


\bibitem[BaB10]{BaB10}  P. Baseilhac and S. Belliard,
{\em Generalized $q$-Onsager algebras and boundary affine Toda field theories}, Lett. Math. Phys. {\bf 93} (2010), 213--228.

\bibitem[BaK05]{BaK05} P. Baseilhac and K. Koizumi,
{\em A new (in)finite-dimensional algebra for quantum integrable models}, Nuclear Phys. {\bf B 720}(3) (2005), 325--347.

\bibitem[BeW18]{BeW18}  C. Berman and W. Wang,
{\em Formulae of $\imath${}divided powers in ${\mathbf U}_q(\mathfrak{sl}_2)$}, J. Pure Appl. Algebra {\bf 222} (2018), 2667--2702, \href{https://arxiv.org/abs/1703.00602}{arXiv:1703.00602}.

\bibitem[CLW19]{CLW2}  X. Chen, M. Lu and W. Wang, 
{\em Serre-Lusztig relations for $\imath$quantum groups},  
\href{http://arxiv.org/abs/2001.03818}{arxiv:2001.03818}






\bibitem[Ko14]{Ko14} S. Kolb,
{\em Quantum symmetric Kac-Moody pairs}, Adv. Math. {\bf 267} (2014), 395--469.

\bibitem[Le99]{Le99}
G. Letzter,
{\em Symmetric pairs for quantized enveloping algebras}, J. Algebra {\bf 220} (1999), 729--767.

\bibitem[Le02]{Le02}
G. Letzter,
{\em Coideal subalgebras and quantum symmetric pairs},
New directions in Hopf algebras (Cambridge), MSRI publications, {\bf 43}, Cambridge Univ. Press, 2002, pp. 117--166.

\bibitem[Le03]{Le03}
G. Letzter,
{\em Quantum symmetric pairs and their zonal spherical functions},
Transformation Groups {\bf 8} (2003), 261--292.

\bibitem[Lu93]{Lu93} G. Lusztig, {\em Introduction to quantum groups},
Modern Birkh\"auser Classics, Reprint of the 1993 Edition,
Birkh\"auser, Boston, 2010.





\bibitem[St18]{St18} J. Stokman,
{\em Generalized Onsager algebras}, \href{https://arxiv.org/abs/1810.07408}{arXiv:1810.07408}.

\bibitem[T93]{Ter93} P. Terwilliger, {\em The subconstituent algebra of an association scheme. III,} J. Algebraic Combin. {\bf 2} (1993), 177--210.

\end{thebibliography}
\end{document}